\def\Tr{\text{\rm Tr}}
\def\GL{\text{\rm GL}}
\def\Sp{\text{\rm Sp}}
\def\Mat{\text{\rm Mat}}
\def\Hom{\text{\rm Hom}}
\def\End{\text{\rm End}}
\def\Aut{\text{\rm Aut}}
\def\Mat{\text{\rm Mat}}
\def\Fp{\mathbf{F}_p}
\def\id{\text{\rm id}}
\def\isar{\ \smash{\mathop{\longrightarrow}\limits^{\thicksim}}\ }
\newcommand{\M}{\mathcal{M}}  
\newcommand{\A}{\mathcal{A}}  
\renewcommand{\O}{\mathcal{O}}  
\renewcommand{\Im}{\mathop{\mathrm{Im}}}
\renewcommand{\Re}{\mathop{\mathrm{Re}}}
\newcommand{\Z}{\mathbf{Z}}
\newcommand{\Q}{\mathbf{Q}}
\newcommand{\R}{\mathbf{R}}
\newcommand{\C}{\mathbf{C}}
\newcommand{\norm}{\mu}
\newcommand{\transpose}{^\mathrm{t}}
\newcommand{\graph}{\mathop{\mathrm{graph}}}
\newcommand{\modulinonred}{{\mathcal A}_2(l,l)} 
\newcommand{\moduli}{{\mathcal A}_2^{\mathrm{red}}(l,l)} 
\newcommand{\modulitwo}{{\mathcal A}_2^{\mathrm{red}}(2,2)} 
\newcommand{\abs}[1]{|#1|}
\numberwithin{equation}{section}
\newtheorem{theorem}{Theorem}[section]
\newtheorem{proposition}[theorem]{Proposition}
\newtheorem{example}[theorem]{Example}
\newtheorem{lemma}[theorem]{Lemma}
\newtheorem{remark}[theorem]{Remark}
\newtheorem{definition}[theorem]{Definition}
\begin{document}

\author{Reinier Br\"oker, Kristin Lauter, and 
        Marco Streng}
\thanks{
The authors thank David Gruenewald,
Damiano Testa and John Voight for helpful
discussions and the anonymous referee for suggestions to the improvement
of the exposition.
Part of the research for this paper was done during an internship
of the third-named author at Microsoft Research.
The third-named author is partially supported by
EPSRC grant number EP/G004870/1.
}
\address{Reinier~Br\"oker, Brown University, Department of Mathematics, Providence, RI, 02912, USA}
\email{reinier@math.brown.edu}
\address{Kristin~Lauter, Microsoft Research, One Microsoft Way, Redmond, WA, 98052, USA}
\email{klauter@microsoft.com}
\address{Marco~Streng, VU University Amsterdam, the Netherlands}
\email{marco.streng@gmail.com}
\urladdr{http://www.few.vu.nl/~streng/}

\subjclass[2000]{Primary 14J10}

\title{Abelian surfaces admitting an $(l,l)$-endomorphism}

\begin{abstract}
We give a classification of all principally polarized abelian surfaces
that admit an $(l,l)$-isogeny to themselves, 
and show how to compute all the
abelian surfaces that occur.
We make the classification
explicit in the simplest case $l=2$.
As part of our classification, we also
show how to find all principally polarized abelian surfaces
with multiplication by a given imaginary quadratic order.
\end{abstract}

\maketitle

\section{Introduction}
\label{intro}

For a prime~$l$, the $l$-th \emph{modular polynomial}
$\Phi_l \in \Z[X,Y]$ is a singular
model for the modular curve $Y_0(l)$ parametrizing elliptic curves 
together with an isogeny of degree~$l$. This polynomial is used in various
algorithms, including the
`Schoof-Atkin-Elkies'
algorithm (see~\cite{hehcc17}) to count the
number of points on an elliptic curve~$E/\Fp$.
If we specialize $\Phi_l$
in $Y=X$ we get a \emph{univariate} polynomial $\Phi_l(X,X) \in \Z[X]$ whose
roots are the $j$-invariants of the elliptic curves $E/\C$ that admit an 
endomorphism of degree~$l$. There is a close link between $\Phi_l(X,X)$ and
the \emph{Hilbert class polynomials}~$H_\O$ of imaginary quadratic
orders~$\O$.  More precisely, we have
\begin{equation}\label{eq:product}
\Phi_l(X,X) = \prod_{\O} H_\O(X)^{e(\O)}
\end{equation}
where the product ranges over all imaginary quadratic orders~$\O$ that
contain an element of norm~$l$. The exponent
$$
e(\O) = \#\left( \{ \alpha \in \O \hbox{\ of norm\ } l\} / \O^*\right)  \in \Z_{>0}
$$
measures how many elements of norm~$l$ there are in~$\O$.
One of the applications of~\eqref{eq:product} is
a proof that $H_\O(X)$ has integral coefficients.

In this article we investigate an analogue of equation~\eqref{eq:product}
for abelian surfaces.
The appropriate analogue of an isogeny
of degree $l$ is an \emph{$(l,l)$-isogeny}.
If $A$ and $B$ are 
abelian surfaces with principal polarizations $\varphi_A$ and $\varphi_B$, then
an 
$(l,l)$-isogeny
$$
\lambda: (A,\varphi_A) \rightarrow (B,\varphi_B)
$$
is an isogeny $\lambda:A\rightarrow B$ such that
we have $\varphi_A\circ [ l ] = \widehat{\lambda}\circ \varphi_B
      \circ \lambda$, with $\widehat{\lambda}$ the dual of~$\lambda$.
{Let $\moduli \subset \A_2$ be the subset of principally polarized
abelian surfaces $(A,\varphi_A)$ admitting an $(l,l)$-endomorphism,
that is, an $(l,l)$-isogeny to itself.}
The set $\moduli$ is the natural analogue of the zero set of
$\Phi(X,X)$. The goal in this article is to identify the
irreducible subvarieties of~$\moduli$, which is the natural analogue of
equality~\eqref{eq:product}.

While the moduli space of elliptic curves is 1-dimensional, the moduli
space $\A_2$ of principally polarized abelian surfaces is $3$-dimensional.
In Section~\ref{sec:defofmoduli}, we will
define $\moduli$ as a reduced subscheme of $\A_2$ of dimension~$2$.
In particular, it
is a union of finitely many irreducible points, curves, and surfaces.
Our goal is to describe these subvarieties and their moduli interpretations,
and to list them explicitly.
There are various ways to describe varieties, and in this article we choose
\emph{two} descriptions. 

Firstly, we will describe most irreducible subvarieties of $\moduli$ by 
certain pairs
$$
(\M,T)
$$
consisting of a $\O$-submodule~$\M$ and an invertible $2\times 2$-matrix~$T$
with coefficients in the quotient field of~$\O$. Here, $\O$ ranges
over certain orders in degree two and four number fields. This description
dates back to Shimura, and we give a detailed exposition in Section~3. In
particular, Theorem~\ref{thm:modulispaces} gives the dimension of the modular variety
associated to a pair~$(\M,T)$, as well as references to explicit constructions
as complex analytic variety. In Section~4, we explain how to compute, for 
a given~$l$, all pairs $(\M,T)$. The only case where we cannot use
this description is where the ring $\Z[x]$ generated by 
an~$(l,l)$-endomorphism~$x$ is not a domain. We will show in Lemma~\ref{nonfieldlemma} that we can
describe all points in~$\moduli$ with this property by 
triples~$(E_1,E_2,\Gamma)$ consisting of elliptic curves~$E_1,E_2$ and
a finite subgroup of $E_1\times E_2$.
This completely identifies the
space $\moduli$ we are interested in.

Secondly, we can take a more explicit approach and try to describe
each irreducible subvariety by defining equations or a parametrization
of a subspace of an \emph{algebraic} model of~$\A_2$
(Section~5). Our methods depend on whether $(\M,T)$ corresponds
to a surface, curve or point inside the moduli space. As we will explain,
our approach need not work in general yet for pairs $(\M,T)$ corresponding
to \emph{points}. 

In Section~\ref{sec:22endomorphisms}, 
we will relate $\moduli$ to a 
computationally convenient genus-two
analogue of the modular polynomials. We will use this to make the
simplest case~$l=2$ completely explicit.
More precisely, we will prove the following theorem.

\begin{theorem}\label{leq2theorem}
The set $\modulitwo$ is the union of the Humbert surface of discriminant~$8$, 
which is irreducible, and a known finite set of CM-points.
\end{theorem}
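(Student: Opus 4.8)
The plan is to specialize Theorem~\ref{generaltheorem} to $l=2$ and to determine explicitly which of its five types of components actually occur and how they fit together. Writing out the five cases for $l=2$, one sees at once that cases (3), (4) and (5) each produce only finitely many points: in (3) and (4) these are the CM-points attached to the imaginary quadratic orders $\Z[x]$ with $x\bar x=2$ and to the quartic CM orders $\Z[x,\bar x]$ all of whose complex absolute values equal $\sqrt2$, and in (5) they are the finitely many surfaces $(n,n)$-isogenous to a product $E_1\times E_2$ with $-2\sqrt2<t_2<t_1<2\sqrt2$ and $n\mid t_1-t_2$. All of these are enumerated by the recipe of Theorem~\ref{thm:modulispaces}, and together they constitute the finite set of CM-points listed in Section~\ref{section4}. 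Thus everything except the two-dimensional part (case (1)) and the one-dimensional part (case (2)) is a finite set of points, and it remains to understand these two cases.

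For case (1), the real multiplication arises from a symmetric endomorphism $s$ with $s^2=2$, that is, from real multiplication by $\sqrt2$. Any order of $\Q(\sqrt2)$ containing $\sqrt2$ already contains $\Z[\sqrt2]$, which has discriminant $8$ and is maximal since $2\not\equiv1\bmod4$, so $\Z[\sqrt2]$ is the only order that can occur. By Corollary~\ref{humirr}, case (1) therefore contributes a single, irreducible Humbert surface, namely that of discriminant $8$, and this is the unique two-dimensional component of $\modulitwo$.

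The heart of the argument is case (2): I must show that none of the Shimura curves attached to the orders $\Z[x]$ with $x\bar x=2$ yields a new component, because each of them already lies on the Humbert surface of discriminant $8$. For $l=2$ the relevant orders are $\Z[\sqrt{-2}]$, $\Z[(1+\sqrt{-7})/2]$ and $\Z[i]$, corresponding to traces $t=0,1,2$. A surface on such a Shimura curve has quaternionic multiplication by an order in the indefinite quaternion algebra $B$ over $\Q$ attached to $\Z[x]$ in Theorem~\ref{thm:modulispaces}, and $B$ contains the imaginary quadratic field $\Q(x)$. The plan is to show that in each of the three cases $B$ also contains $\Q(\sqrt2)$, with $\sqrt2$ acting as a symmetric endomorphism of discriminant $8$: the $\Q$-algebra generated by a norm-$2$ element $\lambda$ and a square root of $2$ is a quaternion algebra split at the infinite place (because $2>0$), and one identifies it with the indefinite algebra attached to $\Z[x]$. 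Then $\sqrt2\in B$ gives real multiplication by $\Z[\sqrt2]$ on every surface of the Shimura curve, so each such curve is contained in the Humbert surface of discriminant $8$ and contributes no new component.

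The main obstacle is precisely this last containment: one must verify, uniformly in the three orders, that the quaternion algebra produced by Theorem~\ref{thm:modulispaces} genuinely contains $\Q(\sqrt2)$ and that the embedding is compatible with the principal polarization, so that the symmetric endomorphism $\sqrt2$ has discriminant exactly $8$ and lands on the Humbert surface of case (1) rather than on some other Humbert locus. Once this is settled, combining cases (1)--(5) exhibits $\modulitwo$ as the union of the irreducible Humbert surface of discriminant $8$ and the finite set of CM-points of Section~\ref{section4}; some of these points lie on the Humbert surface and some do not, but in either case the stated union is exact.
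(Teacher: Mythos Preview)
Your overall structure matches the paper's: specialize Theorem~\ref{generaltheorem} to $l=2$, observe that cases (3)--(5) give finitely many points, that case (1) gives the single irreducible Humbert surface $H_8$, and that the crux is showing the Shimura curves of case (2) already lie on~$H_8$. So far so good.

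The gap is in your treatment of case (2). First, a bookkeeping point: there is not one Shimura curve per order $\Z[x]$. The Shimura curves are indexed by isomorphism classes of pairs $(\M,T)$ with $S=\delta T$ indefinite, and for $l=2$ the paper finds five of them (Lemma~\ref{lem:case2}), two each for $D=-4,-8$ and one for $D=-7$. Your argument must cover each of these, not just the three orders.

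Second, and more seriously, your proposed mechanism for showing these curves lie on $H_8$ is not an argument. You write that ``the $\Q$-algebra generated by a norm-$2$ element $\lambda$ and a square root of $2$ is a quaternion algebra split at the infinite place \dots\ and one identifies it with the indefinite algebra attached to $\Z[x]$''. But there are infinitely many indefinite quaternion algebras over $\Q$ containing a given imaginary quadratic $\Q(x)$, so no such identification comes for free. What you actually need is that the \emph{specific} order $R\subset B$ attached to the pair $(\M,T)$ (the left order of $\M$, as in Section~\ref{sssec:endomringsquad}) contains an element $y$ with $y^\dagger=y$ and $y^2=2$. That is a statement about $R$, not about $B$, and it does not follow from knowing only $\Q(x)$ and that $B$ is indefinite. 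You correctly flag this as ``the main obstacle'', but flagging it is not the same as resolving it, and there is no uniform argument of the kind you gesture at.

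The paper handles this step by explicit computation rather than abstractly. For the two Shimura curves consisting of products $E\times E$ and $E\times F$ (with $E\to F$ a $2$-isogeny), it writes down the symmetric endomorphisms
\[
\left(\begin{array}{cc}1&1\\1&-1\end{array}\right)
\quad\text{and}\quad
\left(\begin{array}{cc}0&f'\\f&0\end{array}\right)
\]
squaring to~$2$. For the $D=-7$ curve it constructs an explicit $y$ with $y'=y$ and $(\tfrac12 y)^2=2$ inside the proof of Lemma~\ref{lem:case2}. For the remaining curves (and as a cross-check) it parametrizes each Shimura curve concretely and evaluates Gruenewald's equation for $H_8$ on the parametrization. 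Your abstract approach could in principle be made to work---all five quaternion algebras here turn out to be $\Mat_2(\Q)$, and one can then compute each left order $R$ and exhibit the required element---but this is still case-by-case, and as written your proposal does not do it.
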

For the list of CM-points, see Theorem~\ref{thm:summary},
of which Theorem~\ref{leq2theorem} is a summary.
We see that all the (Shimura) curves 
inside~$\modulitwo$ live on the Humbert surface. We expect that this `collapse'
only occurs for small~$l$ and there are Shimura curves not on a Humbert 
surface inside $\moduli$ for larger~$l$. 

\section{Background}
\label{background}

\subsection{Complex abelian surfaces and isogenies}

In this section we recall some background on complex 
abelian surfaces and $(l,l)$-isogenies.
We state the results
in terms of Riemann forms, but note that most can be generalized to fields
of arbitrary characteristic.

Let $V$ be a 2-dimensional complex vector space, and let
$\Lambda \subset V$ be a full lattice. Any form 
$E: \Lambda \times \Lambda \rightarrow \Z$ can be extended to a form
$E: V \times V \rightarrow \R$, and we call $E$
a \emph{Riemann form} if
the form $H(x,y) = E(ix,y) + iE(x,y)$ is Hermitian and positive
definite.
We say that the quotient $V/\Lambda$ is a (complex) \emph{abelian surface} if 
$\Lambda\subset V$ admits a Riemann form.

For an abelian surface $A = V/\Lambda$, we define its dual
as $A^\vee = 
V^\vee/\Lambda^\vee$ with
\begin{eqnarray*}V^\vee &=& \{ f : V \rightarrow \C \mid f(a v) = \overline{a} f(v),
             f(v+w) = f(v)+f(w) \},\\
  \Lambda^\vee &=& \{ f \in V^\vee \mid \Im(f(\Lambda)) \subset \Z \}.
\end{eqnarray*}
A Riemann form $E$ on $A$ determines a
homomorphism 
\begin{equation}\label{eq:polarization}
\varphi_E : A \rightarrow A^\vee
\end{equation}
by $\varphi_E(x) = H(x,\cdot)$ for $x \in V$.
The map $\varphi_E$ is called
a \emph{polarization} and the pair
$(E,\varphi_E)$ a \emph{polarized abelian variety}. 
We say that $(A,\varphi_E)$ is \emph{principally 
polarized} if $\varphi_E$ is an isomorphism,
or, equivalently, if $\det E = 1$ on $\Lambda\times\Lambda$.
In this case, the form~$E$ defines a unique ring
involution $'$
on $\End(A)$ 
via $E(yu,v) = E(u,y'v)$ for $y\in\End(A)$. 

\begin{definition} The map $'$ above is called the
\emph{Rosati-involution} associated to the
principal polarization~$\varphi_E$.
\end{definition}

A surjective morphism $f: A \rightarrow B$ between abelian surfaces is called 
an \emph{isogeny} if it has finite kernel.
The polarization $\varphi_E$ occuring
in~\eqref{eq:polarization} is an example of an isogeny.
If $E'$ is a Riemann form
on $B$, then an isogeny $f: A \rightarrow B$ induces a
Riemann form 
$$
f^*E': (u,v) \mapsto E'(f(u),f(v))
$$
on $A$.

\begin{definition} For a positive integer~$n$, an $(n,n)$-isogeny $f: (A,E) \rightarrow
(B,E')$ is an isogeny $A \rightarrow B$ such
that
\begin{enumerate}
\item $f^* E' = n E$ holds and
\item the kernel of $f$ is contained in~$A[n]$.
\end{enumerate}
\end{definition}
For prime~$n$, the second item is automatic from
the first, and the definition is easily seen to
be equivalent to the definition given in
the introduction. 

\begin{lemma}\label{lem:nntoself} Let $(A,\varphi_E)$ be a principally polarized abelian surface that
admits an $(n,n)$-isogeny $x$ to itself. Then we have  
$x' x = [n]$.
\end{lemma}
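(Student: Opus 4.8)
The plan is to translate both the hypothesis and the desired conclusion into statements about the Riemann form~$E$ acting on $V\times V$, and then to compare them using the defining property of the Rosati involution together with the nondegeneracy of~$E$. Throughout I regard the isogeny $x$ as the $\C$-linear endomorphism of $V$ that it induces, so that expressions such as $xu$ and $x'(xv)$ make sense.

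First I would unwind the hypothesis that $x$ is an $(n,n)$-isogeny of $(A,\varphi_E)$ to itself. By definition this means $x^*E = nE$, which written out is
\[
E(xu, xv) = n\,E(u,v) \qquad\text{for all } u, v \in V;
\]
the second condition $\Ker x \subset A[n]$ plays no role here. Next I would bring in the Rosati involution of Lemma~\ref{rosati}. Taking $y = x$ in the relation $E(yu,v) = E(u, y'v)$ and then substituting $xv$ in place of $v$ gives
\[
E(xu, xv) = E\bigl(u, x'(xv)\bigr) = E\bigl(u, (x'x)v\bigr).
\]
Since $E$ is bilinear, we also have $n\,E(u,v) = E\bigl(u, [n]v\bigr)$. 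Combining these with the hypothesis shows that
\[
E\bigl(u,\, (x'x - [n])v\bigr) = 0 \qquad\text{for all } u, v \in V.
\]

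Finally I would invoke nondegeneracy to strip off the outer argument. Because the form $H(a,b) = E(ia,b) + iE(a,b)$ is positive definite, $E$ is nondegenerate: if $E(u,w)=0$ for all $u$, then evaluating at both $u$ and $iu$ yields $H(\,\cdot\,,w)=0$ and hence $w=0$. Applying this with $w = (x'x-[n])v$ gives $(x'x-[n])v = 0$ for every $v$, so $x'x = [n]$ as endomorphisms of~$A$. The computation is entirely formal, and I expect no real obstacle; the only points demanding a little care are the appeal to nondegeneracy of~$E$, which is precisely where the positive-definiteness built into the definition of a Riemann form is used, and keeping the order of composition $x'x$ (rather than $xx'$) straight, which is forced by whether $x$ sits in the first or the second argument of~$E$.
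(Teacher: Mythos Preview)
Your argument is correct and is exactly the paper's proof, written out in more detail: the paper condenses it to the single chain $E(u,x'xv)=E(xu,xv)=nE(u,v)=E(u,nv)$ and leaves the appeal to nondegeneracy of $E$ implicit. Your remarks on the order $x'x$ versus $xx'$ and on where positive definiteness enters are accurate and add nothing the paper needs but are good to have spelled out.
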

\begin{proof} This follows from the fact that the
equality $E(u,x'xv) = E(xu,xv) = n E(u,v) = E(u, n v)$
holds for all $u,v$. 
\end{proof}

The kernel of an $(n,n)$-isogeny $x: (A,E) \rightarrow (B,E')$ is a subgroup
of the $n$-torsion~$A[n]$ with additional structure. To analyze it,  
we define 
the \emph{Weil pairing}
$e_n:A[n] \times A[n] \rightarrow \C^*[n]$ by
$$
e_n(u, v) = \exp(2\pi i n E(u, v))
$$
for $u$, $v \in \frac{1}{n}\Lambda \subset V$.
We say that a subgroup $G \subset A[n]$ is 
\emph{isotropic} (with respect to the Weil pairing)
if $e_n$ restricts to
the trivial form on $G \times G$. We say that $G$ is
\emph{maximally isotropic}
if it is not strictly contained in another isotropic subgroup.

\begin{lemma}\label{isotropicWeil}
Let $x: (A,E) \rightarrow (B,E')$ be an 
$(n,n)$-isogeny. Then the kernel of $x$ is maximally isotropic with respect
to the Weil pairing. Furthermore, every maximally isotropic subgroup 
$G \subset A[n]$ arises as the kernel of an $(n,n)$-isogeny,
which is unique up to composition with an isomorphism of principally
polarized abelian surfaces.

If $n$ is prime, then there are
exactly $(n^4-1)/(n-1)$ such subgroups.
\end{lemma}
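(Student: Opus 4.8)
The plan is to view $A[n]\cong(\Z/n\Z)^4$ equipped with the Weil pairing $e_n$ as a symplectic module over $\Z/n\Z$. Since $(A,\varphi_E)$ is principally polarized, $e_n$ is a nondegenerate alternating pairing, so every subgroup $K\subseteq A[n]$ has an orthogonal complement $K^\perp$ with $\#K\cdot\#K^\perp=n^4$. Because $e_n$ is alternating we have $e_n(v,v)=1$ for every $v$, so an isotropic subgroup $K\subseteq K^\perp$ can be enlarged to the isotropic subgroup $\langle K,v\rangle$ as soon as there is a $v\in K^\perp\setminus K$. Hence $K$ is maximally isotropic if and only if $K=K^\perp$, that is, if and only if $K$ is isotropic of order exactly $n^2$. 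This reduces the first two claims to showing that $\Ker x$ is isotropic of order $n^2$, and that the quotient by a self-dual subgroup carries an appropriate Riemann form.

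First I would prove that $\Ker x$ is isotropic. Lifting $u,v\in\Ker x$ to $\tilde u,\tilde v\in\frac1n\Lambda$, the images $x\tilde u,x\tilde v$ lie in the lattice of $B$, so the defining relation $x^*E'=nE$ gives $nE(\tilde u,\tilde v)=E'(x\tilde u,x\tilde v)\in\Z$ and therefore $e_n(u,v)=1$. To pin down $\#\Ker x=\deg x$, I would compare discriminants on $\Lambda$: the form $nE$ has discriminant $n^4$ since $E$ is unimodular, while $x^*E'$ has discriminant $(\deg x)^2$ since $E'$ is unimodular and $x$ embeds $\Lambda$ with index $\deg x$; thus $(\deg x)^2=n^4$ and $\deg x=n^2$. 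Together with the reduction above this shows $\Ker x$ is maximally isotropic.

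For the converse I would start from a maximally isotropic $G$, so $G=G^\perp$ and $\#G=n^2$, and take $x\colon A\to B:=A/G$ to be the quotient isogeny; on universal covers $x$ is the identity on $V$, and $B=V/\tilde G$ for the lift $\tilde G$ with $\Lambda\subseteq\tilde G\subseteq\frac1n\Lambda$. I then equip $B$ with the form $E'=nE$. Its Hermitian form is $nH$, hence still positive definite, so the only thing to verify is integrality on $\tilde G$: for $\tilde u=\frac1n a$, $\tilde v=\frac1n b\in\tilde G$ one computes $E'(\tilde u,\tilde v)=nE(\tilde u,\tilde v)=\tfrac1n E(a,b)$, and this lies in $\Z$ precisely because $G$ is isotropic. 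Thus $E'$ is a Riemann form with $x^*E'=nE$ and $\Ker x=G$, so $x$ is an $(n,n)$-isogeny; the same discriminant count even shows $(B,E')$ is again principally polarized.

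Finally, for prime $n=l$ the group $A[l]\cong\Fl^4$ is a symplectic $\Fl$-vector space and the maximally isotropic subgroups are exactly its Lagrangian planes, which I would count through ordered isotropic bases $(v_1,v_2)$: there are $l^4-1$ choices of $v_1\neq 0$, and $v_2$ must lie in the hyperplane $v_1^\perp$ of size $l^3$ but outside $\langle v_1\rangle$, giving $l^3-l$ choices; dividing the product by $\#\GL_2(\Fl)=(l^2-1)(l^2-l)$ yields $(l+1)(l^2+1)=(l^4-1)/(l-1)$. I expect the main obstacle to be the converse direction: one has to recognize that the plain definition of an $(n,n)$-isogeny is met by the quotient map with the single natural choice $E'=nE$, and that the integrality of this form on $\tilde G$ is not an extra hypothesis but is literally the isotropy of $G$.
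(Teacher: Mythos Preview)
Your proof is correct and complete under the (implicit but necessary) hypothesis that both $(A,E)$ and $(B,E')$ are principally polarized; you state this explicitly, and indeed without it the forward implication fails (e.g.\ the identity $(A,E)\to(A,nE)$ would satisfy Definition~2.3 with trivial kernel). The paper's own proof consists solely of two citations---to Milne for the bijection between maximally isotropic subgroups and $(n,n)$-isogenies, and to an earlier paper of two of the authors for the count---so your argument is genuinely different in that it is entirely self-contained. What you gain is transparency: you make visible that isotropy of $G$ is \emph{exactly} the integrality of $nE$ on the overlattice $\tilde G$, and that maximality is \emph{exactly} unimodularity of the quotient polarization, which is a clean conceptual payoff the citations obscure. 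The symplectic count of Lagrangian planes via ordered isotropic bases is the standard one and matches what the cited source does.
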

\begin{proof} The first two statements follow from~\cite[Prop.\ 16.8]{milne}.
The last statement follows from the proof of~\cite[Lemma 6.1]{brokerlauter}.
\end{proof}

We will need a few properties of the analytic and rational representation
of an $(n,n)$-isogeny. We recall the basic concepts in the remainder of
this section.
Let $A_1 = \C^2/\Lambda_1$ and $A_2 = \C^2/\Lambda_2$ be two principally 
polarized abelian surfaces,
and let $f: A_1 \rightarrow A_2$ be a homomorphism.
We can uniquely lift $f$ to a $\C$-linear map
$F: \C^2 \rightarrow 
\C^2$ with $F(\Lambda_1) \subseteq \Lambda_2$. The natural map
$$
\rho_{\mathrm{a}}: \Hom(A_1, A_2) \rightarrow \Hom_\C(\C^2,\C^2)
$$
sending $f$ to $F$ is called the
\emph{analytic representation}
of 
$\Hom(A_1,A_2)$. By restricting $F$ to the lattice $\Lambda_1$, we get a 
$\Z$-linear map $F_{\Lambda_1}$ that induces~$F$. The natural map
$$
\rho_{\mathrm{r}}: \Hom(A_1, A_2) \rightarrow \Hom_\Z(\Lambda_1,\Lambda_2)
$$
sending $f$ to $F_{\Lambda_1}$ is called the
\emph{rational representation}
of $\Hom(A_1,A_2)$. We use the same notation and terminology for
the $\Q$-linear extension of $\rho_{\mathrm{a}}$ and~$\rho_{\mathrm{r}}$.

For $f \in \End_\Q(A) := \End(A) \otimes \Q$,
let $P_f^{\mathrm{a}}$
be the characteristic polynomial
of the analytic 
representation $\rho_{\mathrm{a}}(f)$ of~$f$.
Likewise, we get $P_f^{\mathrm{r}}$ for the rational 
representation of~$f$.
Explicitly, we have
$$
P_f^{\mathrm{a}} = \det(X\thinspace \id_{\C^2} - \rho_{\mathrm{a}}(f))
   \in \C[X] \quad \hbox{and} \quad
P_f^{\mathrm{r}} = \det(X\thinspace \id_\Lambda - \rho_{\mathrm{r}}(f))
  \in \Q[X].
$$
The polynomial $P_f^{\mathrm{a}}$ is quadratic, and $P_f^{\mathrm{r}}$ has 
degree~$4$. The following lemma 
gives the relation between the analytic and the rational representation.

\begin{lemma}\label{theResultsFromSection2onPaAndPr}
Let $A = \C^2/\Lambda$ be a principally polarized abelian 
surface, and let $f \in \End_\Q(A)$ be given. Then we have\par
\smallskip
\item{(a)} $P_f^{\mathrm{r}} =
P_f^{\mathrm{a}} \cdot \overline{P_f^{\mathrm{a}}}$ and \label{rationalrepisproduct}
\item{(b)} $P_{f'}^{\mathrm{a}} = \overline{P_f^{\mathrm{a}}}$ \label{complexreprosati}\par\smallskip\noindent
where $\overline\cdot $ denotes complex conjugation and $'$ denotes the Rosati-involution.
The polynomial $P^{\mathrm{r}}_f$
has integer coefficients if $f$ is an endomorphism of~$A$.
\end{lemma}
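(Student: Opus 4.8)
The plan is to reduce everything to linear algebra over $\C$ by exploiting the identification of the rational representation with the real-linear map underlying the analytic one. For $f \in \End_\Q(A)$ write $F = \rho_{\mathrm{a}}(f)$, a $\C$-linear endomorphism of $V = \C^2$. Viewing $V$ as the real vector space $V_\R = \Lambda \otimes_\Z \R$, the map $F$ becomes $\R$-linear, and by the very construction of $\rho_{\mathrm{r}}$ its restriction to $\Lambda \otimes_\Z \Q$ is exactly $\rho_{\mathrm{r}}(f)$. Since $\Lambda\otimes_\Z\Q$ spans $V_\R$ over $\R$, this means $P_f^{\mathrm{r}}$ is the characteristic polynomial of $F$ regarded as a real operator on $V_\R$, whereas $P_f^{\mathrm{a}}$ is its characteristic polynomial as a complex operator.

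For part~(a) I would pass to the complexification $V_\R \otimes_\R \C$, on which the characteristic polynomial of the real operator $F$ is unchanged. Multiplication by $i$ gives a complex structure $J$ on $V_\R$ with $J^2 = -1$, and on $V_\R \otimes_\R \C$ the operator $J \otimes 1$ has eigenvalues $\pm i$. This yields a decomposition into a $(+i)$-eigenspace isomorphic to $V$ and a $(-i)$-eigenspace isomorphic to the conjugate space $\overline{V}$. Because $F$ is $\C$-linear it commutes with $J$, hence preserves this decomposition, acting as $F$ on the first summand and as $\overline{F}$ on the second. Taking characteristic polynomials factor by factor gives $P_f^{\mathrm{r}} = P_f^{\mathrm{a}} \cdot \overline{P_f^{\mathrm{a}}}$, which is~(a).

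For part~(b) the key step is to identify $\rho_{\mathrm{a}}(f')$ as the adjoint of $F$ with respect to the positive-definite Hermitian form $H$. Writing $G = \rho_{\mathrm{a}}(f')$ and recalling $E = \Im H$, the defining relation of the Rosati involution reads
\[
\Im H(Fu, v) = E(fu,v) = E(u, f'v) = \Im H(u, Gv)
\]
for all $u, v \in V$. On the other hand the $H$-adjoint $F^*$ satisfies $H(Fu,v) = H(u, F^*v)$, so $\Im H(u, (F^* - G)v) = 0$ for all $u,v$; since the real form $E = \Im H$ is nondegenerate, $E(u,w)=0$ for all $u$ forces $w=0$, whence $G = F^*$. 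Choosing an $H$-orthonormal basis of $V$, which exists as $H$ is positive definite, the matrix of $F^*$ is the conjugate transpose of that of $F$, and its characteristic polynomial is $\overline{P_f^{\mathrm{a}}}$. This gives $P_{f'}^{\mathrm{a}} = \overline{P_f^{\mathrm{a}}}$. Finally, if $f$ is an honest endomorphism then $F(\Lambda) \subseteq \Lambda$, so in a $\Z$-basis of $\Lambda$ the matrix $\rho_{\mathrm{r}}(f)$ has integer entries and $P_f^{\mathrm{r}}$ has integer coefficients.

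I expect the only genuine subtlety to be the adjoint identification in part~(b): one must keep straight the relationship between the real alternating form $E$ and the Hermitian form $H$ and invoke the nondegeneracy of $E$ correctly, rather than that of $H$ directly. The rest is a standard application of the complexification of a real operator together with the basis-independence and transpose-invariance of characteristic polynomials.
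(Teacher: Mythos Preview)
Your proof is correct and complete. The paper itself does not give an argument but simply cites Propositions~5.1.2 and Lemma~5.1.4 of Birkenhake--Lange; what you have written is essentially a self-contained unpacking of those results---the complexification $V_\R\otimes_\R\C\cong V\oplus\overline V$ for part~(a) and the identification of the Rosati involution with the $H$-adjoint for part~(b)---so there is no substantive difference in approach, only in level of detail.
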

\begin{proof} This follows immediately from~\cite[Prop.\ 5.1.2]{birkenhakelange} and~\cite[Lemma~5.1.4]{birkenhakelange}.
\end{proof}

\subsection{The definition of {$\moduli$}}\label{sec:defofmoduli}

By Lemma~\ref{isotropicWeil}, the $(l,l)$-isogenies $\lambda:(A,E)\rightarrow (B,E')$
correspond to pairs $((A,E), G)$, where $G\subset A[l]$
is a maximally isotropic subgroup.

Let $\Sp_{4}(\Z)\subset \GL_{4}(\Z)$ be the symplectic group,
consisting of the matrices $M=(a,b;c,d)$ with $a,b,c,d\in\Mat_2(\Z)$
such that
$ad\transpose - bc\transpose = 1$ holds and both $ab\transpose$ and $cd\transpose$ are symmetric.
Let $\mathcal{H}_2$ consist of all symmetric $2\times 2$ complex matrices
with positive definite imaginary part.
Then $\Sp_4(\Z)$ acts on $\mathcal{H}_2$ by
$M\tau = (a\tau+b)(c\tau+d)^{-1}$,
and $\A_2$ is the quotient (\cite[Chapter 8]{birkenhakelange}).

Let $\Gamma_0(l)\subset\Sp_4(\Z)$ be the set of matrices with
$C \equiv 0\bmod N$. Then $\A_2(\Gamma_0(l))=\Gamma_0(l)\backslash \mathcal{H}_2$
parametrizes the pairs $((A,E), G)$, i.e., the $(l,l)$-isogenies.
It has two natural maps to $\A_2$, given by
$\tau \mapsto \tau$ (corresponding to $\lambda\mapsto (A,E)$)
and $\tau\mapsto l\tau$ (corresponding to $\lambda\mapsto (B,E')$).

In particular, we get a natural map $\A_2(\Gamma_0(l))\rightarrow \A_2\times \A_2$.
Both $\A_2$ and $\A_2(\Gamma_0(l))$ are quasi-projective
algebraic varieties (as in \cite[Remark 8.10.4]{birkenhakelange}),
so the image of $\A_2(\Gamma_0(l))$ in $\A_2\times \A_2$ is a
subvariety.
The intersection with the diagonal $\Delta:\A_2\rightarrow \A_2\times \A_2$ gives us a subscheme $\modulinonred = \Delta^{-1}(\A_2(\Gamma_0(l)))$
of $\A_2$ of which we take the reduced subscheme~$\moduli$.
As $\moduli\subsetneq\A_2$ is a reduced subscheme,
it is a union of finitely many irreducible points,
curves and surfaces.

\section{Moduli spaces of abelian surfaces with endomorphism structure}
\label{sec:endomrings}
For a principally polarized abelian surface $A = (A,\varphi_E)$ with 
$(n,n)$-isogeny~$x$ to itself, the algebra $K = \Q[x]$ is a subalgebra of the
endomorphism algebra~$\End_\Q(A)=\End(A)\otimes\Q$.
We distinguish between
the case that $K$ is / is not a field.

\subsection{$K$ is not a field}
We start with the case where $K$ is \emph{not} a field,
as we will see that this is the only case where Shimura's
classification of the moduli spaces does not apply.
The 
result is the following lemma.

\begin{lemma}
\label{nonfieldlemma}
  Let $x$ be an endomorphism of a principally polarized abelian surface
  $A/\C$ with $xx' = l$ for some prime~$l$. 
  Let $K=\Q[x]\subseteq \End_\Q(A)$ and assume that $K$ is not
a field. Then we have
  $P_x^{\mathrm{a}}=(X-\beta_1)(X-\beta_2)$
     for some pair of imaginary quadratic integers
     $\beta_1, \beta_2\in\C$ of absolute value $\sqrt{l}$ and non-equal trace.
The map
$$
x \mapsto (\beta_1,\beta_2)
$$
gives an isomorphism $K \isar \Q(\beta_1) \times \Q(\beta_2)$.
Furthermore, there exist elliptic curves $E_1,E_2$, a
positive integer $n \mid (\Tr \beta_1 - \Tr \beta_2) \not = 0$,
an $(n,n)$-isogeny $$ \lambda:E_1\times E_2\rightarrow A,
$$ and for $i=1,2$ an identification of $\Z[\beta_i]$ with
a subring of $\End(E_i)$ such that
$$x\circ \lambda=\lambda\circ (\beta_1, \beta_2).$$
\end{lemma}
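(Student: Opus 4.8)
The plan is to read off everything from the analytic representation $\rho_{\mathrm a}(x)$, a $2\times 2$ complex matrix whose characteristic polynomial is $P_x^{\mathrm a}=(X-\beta_1)(X-\beta_2)$. First I would translate $xx'=l$ into the statement that $\rho_{\mathrm a}(x)$ is, up to the scalar $\sqrt l$, unitary for the Hermitian form $H$: the Rosati involution of Lemma~\ref{rosati} is the adjoint with respect to $H$, so $\rho_{\mathrm a}(x')=\rho_{\mathrm a}(x)^{*}$ and hence $\rho_{\mathrm a}(x)\rho_{\mathrm a}(x)^{*}=\rho_{\mathrm a}(l)=l\,\id$. This is consistent with part (b) of Lemma~\ref{theResultsFromSection2onPaAndPr}, which already gives that the eigenvalues of $\rho_{\mathrm a}(x')$ are $\overline{\beta_1},\overline{\beta_2}$. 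It follows that $\rho_{\mathrm a}(x)$ is normal, hence diagonalizable, and that its eigenvalues satisfy $\abs{\beta_i}=\sqrt l$. Since $P_x^{\mathrm r}=P_x^{\mathrm a}\cdot\overline{P_x^{\mathrm a}}\in\Z[X]$ by the same lemma, the four numbers $\beta_1,\overline{\beta_1},\beta_2,\overline{\beta_2}$ are algebraic integers.

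Next I would pin down $K$. Because $\rho_{\mathrm r}$ is faithful and $\rho_{\mathrm r}(x)$ is diagonalizable with those four eigenvalues, the minimal polynomial $m_x$ of $x$ over $\Q$ is the product of $(X-\lambda)$ over the \emph{distinct} eigenvalues, and $K\cong\Q[X]/(m_x)$. Thus $K$ fails to be a field exactly when the eigenvalues fall into at least two Galois orbits over $\Q$, and the combinatorial heart of the argument is to rule out every orbit pattern but one. A real eigenvalue would have to equal $\pm\sqrt l$ and would drag in its conjugate $\mp\sqrt l$; that conjugate is either absent, forcing $m_x\notin\Q[X]$, which is impossible, or present, forcing $m_x=X^2-l$ and $K$ a field. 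An orbit of size three cannot be stable under complex conjugation, since among non-real numbers conjugation is fixed-point-free; and an orbit $\{\beta_1,\beta_2\}$ transverse to the conjugate pairs is impossible for the same reason, as Galois orbits must be conjugation-stable. The only surviving non-field configuration is two distinct conjugate pairs $\{\beta_1,\overline{\beta_1}\}$ and $\{\beta_2,\overline{\beta_2}\}$. Each $\beta_i$ is then non-real, its minimal polynomial is $X^2-(\Tr\beta_i)X+l\in\Q[X]$ with $\Tr\beta_i\in\Z$, so $\beta_i$ is an imaginary quadratic integer; the traces are unequal, for equal trace and equal norm $l$ would merge the pairs; and the Chinese Remainder Theorem gives $K\cong\Q(\beta_1)\times\Q(\beta_2)$ with $x\mapsto(\beta_1,\beta_2)$.

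For the splitting into elliptic curves I would use the idempotents $e_1,e_2\in K$ cutting out the two factors. On the tangent space $\rho_{\mathrm a}(e_i)$ is the projection onto the $\beta_i$-eigenline of $\rho_{\mathrm a}(x)$, which is one-dimensional precisely because $\beta_1\neq\beta_2$; hence the abelian subvarieties $A_i=\Im(e_i)$ are elliptic curves $E_1,E_2$, and Poincar\'e reducibility yields an isogeny $\lambda\colon E_1\times E_2\to A$. Since $x$ acts as $\beta_i$ on $A_i$ we get $x\circ\lambda=\lambda\circ(\beta_1,\beta_2)$, and restricting $x$ to $E_i$ shows $\beta_i\in\End(E_i)\otimes\Q$, so after choosing the lattices suitably one arranges $\Z[\beta_i]\subseteq\End(E_i)$. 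To locate the integer $n$, note that $x'\in\End(A)$ by Lemma~\ref{rosati} and that $x+x'$ acts as the scalar $\Tr\beta_i$ on $A_i$, because the two eigenlines are $H$-orthogonal and $x'$ acts there as $\overline{\beta_i}$. Hence $(x+x')-(\Tr\beta_2)\,\id=(\Tr\beta_1-\Tr\beta_2)\,e_1$ is a genuine endomorphism, so $(\Tr\beta_1-\Tr\beta_2)\,e_i\in\End(A)$; this is what clears the denominators of the idempotents and forces the degree of $\lambda$ to be governed by a divisor $n$ of $\Tr\beta_1-\Tr\beta_2$.

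The hard part will be the last point: upgrading the abstract isogeny $\lambda$ to a genuine $(n,n)$-isogeny with $n\mid(\Tr\beta_1-\Tr\beta_2)$. The eigenvalue and orbit analysis is essentially formal once the Rosati involution is identified with the $H$-adjoint, but controlling the pulled-back polarization $\lambda^{*}\varphi_A$ on $E_1\times E_2$, using the $H$-orthogonality of the eigenlines to show that this polarization splits up to scaling, and reading off from that splitting the precise factor $n$ dividing $\Tr\beta_1-\Tr\beta_2$, is the delicate bookkeeping I expect to occupy most of the proof.
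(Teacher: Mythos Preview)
Your proposal is correct and lands on the same endgame as the paper: the symmetric idempotent $(x+x'-\Tr\beta_2)/(\Tr\beta_1-\Tr\beta_2)\in\End_\Q(A)$, the minimal $n$ clearing its denominator, and then the verification that the addition map $E_1\times E_2\to A$ is an $(n,n)$-isogeny. For that last step the paper invokes two specific tools you should be aware of: the isomorphism $\End^s(A)\cong\mathrm{NS}(A)$ (Birkenhake--Lange, Thm.~5.2.4) to see that $\varphi_A$ restricts to $\varphi_{E_i}\circ[n]$ on each factor, and an argument of Kuhn to get $\ker\lambda\subset(E_1\times E_2)[n]$.

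Where you genuinely diverge is in the eigenvalue analysis. The paper triangularizes $\rho_{\mathrm a}(x)$ and then uses the \emph{positive-definiteness of the Rosati trace form} to show that the two diagonal entries lie in distinct Galois orbits: if $g$ is the minimal polynomial of $\beta_1$ and $g(\gamma_2)=0$, then $\Tr(g(x)g(x)')=0$ forces $g(x)=0$, contradicting that $K$ is not a field. You instead observe that $\rho_{\mathrm a}(x')$ is the $H$-adjoint of $\rho_{\mathrm a}(x)$, so $xx'=l$ makes $\rho_{\mathrm a}(x)/\sqrt{l}$ unitary, hence normal and diagonalizable with $H$-orthogonal eigenlines; you then run a clean Galois-orbit case analysis on the four eigenvalues of $\rho_{\mathrm r}(x)$. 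Your route is arguably more transparent and gives the $H$-orthogonality of $V_1,V_2$ for free (which is exactly what you need later for the polarization to split), whereas the paper's route avoids identifying the Rosati involution with the $H$-adjoint explicitly. One small cleanup: when you write $A_i=\Im(e_i)$, the idempotents $e_i$ live only in $\End_\Q(A)$, so you should first pass to $ne_i\in\End(A)$ before taking images, exactly as the paper does; your later observation $(x+x')-\Tr\beta_2=(\Tr\beta_1-\Tr\beta_2)e_1\in\End(A)$ is precisely what makes this work.
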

If the ring $\Z[\beta]$ is contained in the endomorphism ring
of~$E$, then we automatically have $E \cong \C/\mathfrak{a}$ for
a $\Z[\beta]$-ideal~$\mathfrak{a}$.
It is well understood how to compute an algebraic model of 
$\C/\mathfrak{a}$,
we refer to~\cite{hehcc5} for details.
Once the finitely many candidates for $E_1$ and~$E_2$ are
computed, there are only finitely many possibilities for $n$
and $\Gamma=\ker(\lambda)$, hence for~$A$.
We represent~$A$ by the triple $(E_1,E_2,\Gamma)$.

\begin{proof}[Proof of Lemma~\ref{nonfieldlemma}]
Choose a basis of $\C^2$ such that the analytic representation of
$x$ is upper-triangular, say
$$ 
\rho_{\mathrm{a}}(x) = \left(\begin{array}{cc}\beta_1 & * \\ 0 & \beta_2\end{array}\right)
$$
with $\beta_1,\beta_2$ in $\C$.
Note that $\rho_{\mathrm{a}}(x')=\rho_{\mathrm{a}}(lx^{-1})$
is also upper-triangular, say
$$ 
\rho_{\mathrm{a}}(x') = \left(\begin{array}{cc}\gamma_1 & * \\ 0 & \gamma_2\end{array}\right).
$$

As $\beta_1$ and $\beta_2$ are roots of $P_{x}^{\mathrm{r}}$,
they are algebraic integers. We let
$g$ be the minimal polynomial of~$\beta_1$.
We claim that $\gamma_2$ is not an algebraic conjugate of $\beta_1$, i.e., that $g(\gamma_2)\not=0$ holds.
Indeed: otherwise we have $\Tr(g(x)g(x)')=\Tr(g(x)g(x'))=0$ and
because $'$ is positive definite by~\cite[Thm.~5.1.8]{birkenhakelange},
we find $g(x)=0$, contradicting the assumption that $K$ is not a field.

By Lemma~\ref{complexreprosati}(b), we have $\{\gamma_1,\gamma_2\}=\{\overline{\beta_1},\overline{\beta_2}\}$,
so the claim above yields
$\gamma_1=\overline{\beta_1},\gamma_2=\overline{\beta_2}$,
and that $\beta_1$ and $\beta_2$ are not algebraic conjugates.
It follows from $\beta_i\overline{\beta_i}=\beta_i\gamma_i=l$
that $\beta_i$ has absolute value $\sqrt{l}$ for $i=1,2$.
Now $\beta_1$ and
$\beta_2$ are non-conjugate imaginary quadratic integers
with the same norm, so they have distinct trace.
It follows that $x \mapsto (\beta_1,\beta_2)$ gives an isomorphism 
$$
K=\Q[X]/(P_x^{\mathrm{r}}) \isar \Q(\beta_1)\times \Q(\beta_2).
$$

To prove the remainder of the lemma, put 
$$
e_1 = {x+x'-\Tr\beta_2} \in \End(A),
$$
let $k = \Tr(\beta_1)-\Tr(\beta_2) \not = 0$, and $e_2=k-e_1$.
The relation $e_i^2 = ke_i$
holds on both $\beta_j$-eigenspaces $V_j\subset \C^2$,
and hence on~$A=\C^2/\Lambda$. We see 
that $f_i \in \End_\Q(A)$ satisfies
$$
f_i^{\prime} = f_i \qquad \hbox{and} \qquad f_i^2 = f_i,
$$
that is, $f_i$ is a symmetric idempotent in the endomorphism algebra. Let 
$n \mid k$ be the smallest positive integer such that $nf_1$ (equivalently $nf_2$)
is an 
endomorphism
and let $E_i$ be the image of~$nf_i$,
which is an elliptic curve.
We consider the natural summation map $$\lambda: E_1\times E_2\rightarrow A,$$
which is surjective, hence an isogeny.
Moreover, as $E_i=V_i/(V_i\cap\Lambda)$,
we can identify $x_{|E_i}\in\End(E_i)$
with $\beta_i\in\Z[\beta_i]$,
which gives us $x\circ \lambda = \lambda\circ (\beta_1,\beta_2)$.
It now remains only to prove that $\lambda$ is an $(n,n)$-isogeny.

Let $\nu = (nf_1,nf_2):A\rightarrow E_1\times E_2$ and note $\lambda\circ\nu=nf_1+nf_2=n$,
so the kernel of the isogeny $\lambda$ is contained in $(E_1\times E_2)[n]$.
It now suffices to show that the induced polarization $\lambda^* E_A$
is $n$ times the standard polarization
on $E_1\times E_2$.
This is a direct application
of \cite[Corollary~5.3.6 and Criterion~5.3.4]{birkenhakelange}.
For completeness, we give a few more details.

It follows from the easy fact that $V_1$ and $V_2$
are orthogonal for the Riemann form~$E_A$
(as $n^2E(v_1, v_2)= E(f_1v_1,f_2v_2)=
E(f_1f_2'v_1,v_2)=E(0,v_2)=0$ for $v_i\in V_i$)
that the induced polarization on $E_1\times E_2$
is a product of polarizations on $E_1$ and~$E_2$.
As every alternating form on $\Z^2$ is a multiple
of a form of determinant one, we find that
the induced polarizations on $E_1$ and $E_2$
are respectively
$d_1$ and $d_2$ times the unique principal polarizations
for positive integers~$d_i$.
We compute $d_i$ with \cite[Criterion~5.3.4]{birkenhakelange},
which states that as $E_A$ is principal,
the number $d_i$ is the minimal integer
such that $d_i$ times the idempotent $e_i/k$ is an endomorphism,
that is, $d_i=n$.
\end{proof}

\subsection{$K$ is a field}\label{sec:fieldcase}
\label{maintheoremsec}
We continue with the case that $K = \Q[x]$ is a field. The following
lemma restricts the structure of~$K/\Q$.
\begin{lemma}
\label{fieldcase}
Let $A$ and $K=\Q[x]$ be as above. Suppose that~$K$ is a field.
Then~$K$ is either a quadratic field or a 
degree-$4$ CM-field, that is, a degree-4 number field~$K$
such that complex conjugation induces the same
non-trivial automorphism
of~$K$ for every embedding $K\rightarrow\C$.
In both cases, the Rosati-involution~$'$ is complex conjugation on~$K$,
and~$x$ is a \emph{Weil $l$-number}, that is,
$x$ satisfies $x\overline{x}=l$ for every embedding into~$\C$.
\end{lemma}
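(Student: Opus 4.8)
The plan is to exploit the relation $xx'=l$ to see that the Rosati-involution restricts to $K$, to bound $[K:\Q]$ by $4$ using the rational representation, and then to read off the possible structures of $K$ from the positivity of the Rosati-involution.

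First I would show that $'$ restricts to an involution of the field $K$. Since $xx'=l$ and $x\neq 0$, we have $x'=lx^{-1}$, and because $K=\Q[x]$ is a field we get $x^{-1}\in K$, hence $x'\in K$. Thus $'$ is a ring involution of $K$ fixing $\Q$. Next I would bound the degree of $K$. The rational representation $\rho_{\mathrm r}$ is faithful, so by Cayley--Hamilton the minimal polynomial of $x$ over $\Q$ divides $P_x^{\mathrm r}$, which has degree~$4$; as $K$ is a field this minimal polynomial is irreducible and generates $K$, so $[K:\Q]\le 4$. Moreover $x\notin\Q$: if $x\in\Q$ then $'$ fixes $x$ and $x^2=xx'=l$, impossible for a prime $l$. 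Hence $2\le[K:\Q]\le 4$.

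I would then split according to whether $'$ is trivial on $K$. If $'|_K=\mathrm{id}$, then $x'=x$ forces $x^2=l$, so $K=\Q(\sqrt l)$ is a real quadratic field, on which complex conjugation is the identity; this is the quadratic case with $'$ equal to (trivial) complex conjugation. If $'|_K$ is nontrivial, it has order $2$, and its fixed field $K_0$ satisfies $[K:K_0]=2$. Here the crucial input is the positivity of the Rosati-involution from \cite[Thm.~5.1.8]{birkenhakelange}: the form $y\mapsto\Tr(yy')$ is positive definite, and restricted to $K$ it is a positive multiple of $\mathrm{Tr}_{K/\Q}(y\,\sigma(y))$, where $\sigma={}'|_K$, because $\Lambda\otimes\Q$ is a free $K$-module. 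The standard Albert-type argument then shows that $K_0$ is totally real and that $K$ is a CM-field with $\sigma$ equal to complex conjugation. Combining this with $[K:\Q]=2[K_0:\Q]\le 4$ gives $[K_0:\Q]\in\{1,2\}$, so $K$ is either imaginary quadratic or a quartic CM-field (in particular the degree is never $1$ or $3$).

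The step I expect to be the main obstacle is this last deduction from positivity. I would prove that $K_0$ is totally real by noting that $\Tr(a^2)>0$ for $0\neq a\in K_0$ forces $\sum_\tau\tau(a)^2>0$ over the complex embeddings $\tau$ of $K_0$, which fails if some $\tau$ is non-real, since one can choose $a$ with $\tau(a)$ nearly purely imaginary. For the CM property I would write $\mathrm{Tr}_{K/\Q}(y\,\sigma(y))=\sum_\phi\phi(y)\,\phi(\sigma(y))$, pair each embedding $\phi$ with $\phi\circ\sigma$, and observe that positivity for all $y\neq 0$ forces $\phi\circ\sigma=\overline{\phi}$ for every embedding $\phi$ (otherwise $\phi(y)$ and $\phi(\sigma(y))$ vary independently and the sum can be made negative). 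This identifies $\sigma$ with complex conjugation and shows $K$ is totally imaginary over the totally real $K_0$, i.e.\ a CM-field. The remaining parts of the argument are bookkeeping with the degree bound and the case distinction.
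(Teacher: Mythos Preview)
Your argument is correct. The paper's own proof is simply a one-line citation ``See \cite[Sec.\ 5.5]{birkenhakelange}'', i.e.\ it defers entirely to the Albert classification as presented in Birkenhake--Lange. What you have written is essentially an unpacking of that reference tailored to the situation at hand: you observe $x'=lx^{-1}\in K$ so that $'$ preserves $K$, bound $[K:\Q]\le 4$ via the rational representation, and then run the standard positivity argument (positive-definiteness of $y\mapsto\Tr(yy')$ forces the fixed field $K_0$ to be totally real and $\sigma=\,'|_K$ to coincide with complex conjugation under every embedding). The degree dichotomy $[K:\Q]\in\{2,4\}$ then falls out of $[K:K_0]=2$ (nontrivial case) or $K=\Q(\sqrt l)$ (trivial case). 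So the two proofs agree in substance; yours is self-contained, the paper's is a pointer. One small remark: your claim that the trace form restricted to $K$ is a positive multiple of $\Tr_{K/\Q}(y\sigma(y))$ is justified exactly as you say, since $\Lambda\otimes\Q$ is a $K$-vector space (hence free) and the trace of multiplication by $y$ on $K^{m}$ is $m\,\Tr_{K/\Q}(y)$.
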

\begin{proof} See~\cite[Sec.\ 5.5]{birkenhakelange},
except for the fact that~$x$ is a Weil number, which is
Lemma~\ref{lem:nntoself}.
\end{proof}

\begin{example}
\label{quadraticfields}
Let $\Q[x]$ be a quadratic field over $\Q$ and suppose that
we have $x\overline{x}=l$.
Then the trace of $x$ is at most $2\sqrt{l}$.
In particular, for $l=2$, we have $x\in\{\sqrt{-2},\frac{1}{2}(1+\sqrt{-7}),1+\sqrt{-1},\sqrt{2}\}$
up to sign and complex conjugation.
\end{example}

By Lemma~\ref{fieldcase}, the field $K$ is
quadratic or a degree-$4$ CM-field.
In the former case, let $m(K)=2$ and in the latter case let $m(K)=1$.
Writing $A = V/\Lambda$, there exists an isomorphism
$$
\Lambda \otimes \Q \isar K^{m(K)}.
$$
The lattice $\Lambda$ has a natural $\Z[x]$-module structure,
and, under an isomorphism as above,
corresponds to a $\Z[x]$-submodule~$\M$ of 
$K^{m(K)}$ of rank $4$ over~$\Z$.

\begin{lemma}
\label{easypol}
Let $A,K,\,',\M$ be as above. Then there exists a matrix $T 
\in \GL_{m(K)}(K)$ satisfying $(T')\transpose = -T$ such that 
$E(u,v) = \Tr_{K/\Q}(u\transpose T v')$ is the Riemann form inducing the 
principal polarization on~$A$. 
\end{lemma}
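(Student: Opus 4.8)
The plan is to turn the statement into linear algebra over the field $K$ with its involution~$'$, which by Lemma~\ref{fieldcase} is complex conjugation~$\overline{\cdot}$. Put $m=m(K)$ and $W=\Lambda\otimes\Q$. Under the chosen isomorphism $W\isar K^{m}$ the endomorphism~$x$, and hence all of $K=\Q[x]$, acts by scalar multiplication, so $W$ is simply the $K$-vector space $K^{m}$. The Riemann form extends to a nondegenerate \emph{alternating} $\Q$-bilinear form $E$ on $W$ (alternating because $E=\Im H$ for the Hermitian form~$H$), and for $a\in K$ the Rosati relation $E(au,v)=E(u,a'v)$ of Lemma~\ref{rosati} together with $a'=\overline a$ gives the compatibility
$$
E(au,v)=E(u,\overline a\,v)\qquad(a\in K,\ u,v\in W).
$$
It therefore suffices to prove that every nondegenerate alternating $\Q$-bilinear form on $K^{m}$ satisfying this compatibility equals $E_T(u,v)=\Tr_{K/\Q}(u\transpose T v')$ for a unique skew-Hermitian $T\in\GL_{m}(K)$; for quadratic~$K$ this is $m=2$.

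First I would verify that each $E_T$ is of the required type. Since a scalar $a\in K$ commutes through the matrix product and $\Tr_{K/\Q}$ is $\Q$-linear, $E_T(au,v)=\Tr_{K/\Q}(a\,u\transpose T\overline v)=E_T(u,\overline a v)$, so $E_T$ satisfies the compatibility for every $T\in\Mat_{m}(K)$. The map $T\mapsto E_T$ is injective: evaluating on $u=c\,e_i$ and $v=e_j$ gives $E_T(u,v)=\Tr_{K/\Q}(c\,T_{ij})$, and if this vanishes for all $c\in K$ then $T_{ij}=0$ because the trace pairing $(a,b)\mapsto\Tr_{K/\Q}(ab)$ on $K$ is nondegenerate.

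For surjectivity I would match dimensions. A compatible form $E$ is the same datum as the $\Q$-linear map $u\mapsto E(u,\cdot)$ into the dual $\Hom_\Q(W,\Q)$, and the compatibility says precisely that this map is $K$-linear once the dual carries the $K$-action $(a\cdot\phi)(w)=\phi(\overline a w)$. Thus the compatible forms are $\Hom_K\bigl(W,\Hom_\Q(W,\Q)\bigr)$, of $\Q$-dimension $m^{2}[K:\Q]=\dim_\Q\Mat_{m}(K)$. An injective $\Q$-linear map between spaces of equal finite dimension is an isomorphism, so $E=E_T$ for a unique $T\in\Mat_{m}(K)$. (One can instead build $T$ by hand: for each $i,j$ there is a unique $T_{ij}$ with $E(e_i,e_jb)=\Tr_{K/\Q}(T_{ij}\overline b)$ for all $b$, and the compatibility extends this to arbitrary arguments.)

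It remains to read off the last two properties of~$T$. Using that $\Tr_{K/\Q}$ is invariant under the automorphism~$'$ and that a $1\times1$ product equals its transpose, one computes $E_T(u,v)=\Tr_{K/\Q}\bigl(v\transpose (T')\transpose\overline u\bigr)=E_{(T')\transpose}(v,u)$; hence $E_T$ is alternating exactly when $E_{(T')\transpose}=-E_T$, which by injectivity means $(T')\transpose=-T$. Finally, nondegeneracy of~$E$ — which holds since $\varphi_E$ is a principal polarization, so $E$ is unimodular on $\Lambda$ — forces $T$ to be invertible, as any nonzero $v_0$ with $T\overline{v_0}=0$ would lie in the radical of $E_T$. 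This produces the required $T\in\GL_{m}(K)$ with $(T')\transpose=-T$. The only real content is the representability step, and the one thing to get right there is the bookkeeping of the scalar $K$-action against transposition and conjugation in $E_T$; nondegeneracy of the trace form and the dimension count then finish it.
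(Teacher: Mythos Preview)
Your argument is correct. The paper does not give a proof at all; it simply cites Propositions~9.2.3 and~9.6.5 of Birkenhake--Lange. What you have written is essentially the content behind those citations: a direct linear-algebra argument over $(K,\overline{\phantom{x}})$ showing that every nondegenerate alternating $\Q$-bilinear form on $K^m$ compatible with the Rosati relation is of the form $E_T$ for a unique skew-Hermitian $T\in\GL_m(K)$. Your dimension count via $\Hom_K\bigl(W,\Hom_\Q(W,\Q)\bigr)$ and your use of nondegeneracy of the trace pairing for injectivity are clean, and the computation $E_T(u,v)=E_{(T')\transpose}(v,u)$ correctly identifies the alternating condition with $(T')\transpose=-T$. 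So the difference is only that you supply the argument while the paper outsources it; your version has the advantage of being self-contained and of making transparent exactly where each hypothesis (alternating, nondegenerate, Rosati-compatible) is used.
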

\begin{proof} See~\cite[Prop.\ 9.2.3.]{birkenhakelange} 
and~\cite[Prop.\ 9.6.5]{birkenhakelange}.
\end{proof}
\noindent
\begin{example}\label{ex:realquad}
If $K$ is a real quadratic field, then $'$ is the 
identity on~$K$. The matrix $T$ therefore equals
$$\Bigl(\begin{array}{cc} 0 & r \\ -r & 0\end{array} \Bigr)$$ for some $r \in K^*$.
\end{example}
\noindent
Lemma~\ref{easypol} tells us that the
abelian surface $A$ yields a pair $(\M,T)$ 
consisting of a $\Z[x]$-submodule $\M$ of $K^{m(K)}$ and a 
matrix $T \in \GL_{m(K)}(K)$ such that the $\Q$-bilinear form
$K^{m(K)}\times K^{m(K)}\rightarrow\Q$ given by
$(u,v)\mapsto \Tr_{K/\Q}(u\transpose T v')$
restricts to a $\Z$-bilinear form $\M\times\M\rightarrow\Z$
of determinant~$1$. 
Conversely, for a fixed algebraic integer $x$ and for a fixed pair
$(\M,T)$, Shimura gives
an explicit complex analytic description~\cite[Sec.\ 9.8]{birkenhakelange}
of the subspace $S(\M,T)\subset \A_2$
of principally polarized abelian
surfaces belonging to this pair $(\M,T)$.
The following theorem states that $S(\M, T)$ is an irreducible
variety and gives its dimension.

\begin{theorem}\label{thm:modulispaces}
Let $K$ be either a quadratic field or a degree-$4$ CM-field and
let $\Z[x]$ be an order in~$K$. Let $\M$ be a $\Z[x]$-submodule of $K^{m(K)}$ 
of rank~$m(K)$ and $T\in \GL_{m(K)}(K)$ a matrix satisfying $(T')\transpose = -T$.
Suppose that the $\Q$-bilinear form
$K^{m(K)}\times K^{m(K)}\rightarrow\Q$ given by
$(u,v)\mapsto \Tr_{K/\Q}(u\transpose T v')$
restricts to a $\Z$-bilinear form $\M\times\M\rightarrow\Z$
of determinant~$1$. 
Let $S(\M,T)\subset \A_2$ be the subspace of all principally polarized abelian surfaces
that correspond to $(\M,T)$.
Then the following holds:
\begin{itemize}
\item if $K$ is real quadratic, then $S(\M,T)$ is an
irreducible (Humbert) surface,
\item if $K$ is imaginary quadratic and $\det(T) > 0$, then $S(\M,T)$ is
   an irreducible (Shimura) curve.
   For any abelian surface $A$ in $S(\M,T)$,
   the polynomial $P_x^{\mathrm{a}}$ is the minimal polynomial of~$x$;
\item if $K$ is imaginary quadratic and $\det(T) <0$,
      then $S(\M,T)$ consists of one abelian surface $A$, which is
   isogenous as an unpolarized 
   abelian variety to a product $E\times E$ of an elliptic curve with itself.
   In this case we have $\End_\Q(E) = K$ and $P_x^{\mathrm{a}}=(X-\beta)^2$
   for some $\beta\in \C$;
\item if $K$ is a degree-$4$ CM-field, then $S(\M,T)$ 
   consists of one abelian surface.
\end{itemize}
\end{theorem}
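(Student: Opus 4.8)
The plan is to work directly from Shimura's analytic description of $S(\M,T)$ recalled in~\cite[Sec.\ 9.8]{birkenhakelange}. Writing $\M_\R=\M\otimes_\Q\R=K_\R^{m(K)}$ with $K_\R=K\otimes_\Q\R$, the surfaces in $S(\M,T)$ are the quotients $\M_\R/\M$ equipped with an $\R$-linear complex structure $J$ on $\M_\R$ that (i) commutes with the action of $K$, so that $K\hookrightarrow\End_\Q(A)$, and (ii) turns $E(u,v)=\Tr_{K/\Q}(u\transpose T v')$ into a Riemann form, i.e.\ satisfies $E(Ju,Jv)=E(u,v)$ together with the positivity $E(u,Ju)>0$. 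Let $\mathcal D$ denote the space of all such $J$ and let $\Gamma$ be the arithmetic group of $K$-linear automorphisms of $\M$ preserving $E$. Then $S(\M,T)$ is the image in $\A_2$ of $\mathcal D/\Gamma$, and the two assertions — irreducibility and dimension — follow once I show that $\mathcal D$ is a connected complex manifold and compute $\dim_\C\mathcal D$: connectedness forces $\mathcal D/\Gamma$, hence its image $S(\M,T)$, to be irreducible, while $\dim S(\M,T)=\dim_\C\mathcal D$ because the period map to $\A_2$ is generically finite.

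The key computation is local at the real places of the maximal totally real subfield $K_0\subseteq K$. Since $K_\R=\prod_v(K\otimes_{K_0}\R)$ splits over the real places $v$ of $K_0$, the space $\M_\R$ and the form $E$ split into pieces indexed by the $v$, and a complex structure $J$ commuting with $K$ respects this splitting; thus $\mathcal D=\prod_v\mathcal D_v$, each factor being the symmetric domain attached to the local Hermitian form. Because $'$ is complex conjugation by Lemma~\ref{fieldcase} and $(T')\transpose=-T$, the matrix $T$ is skew-Hermitian, so $iT$ is Hermitian with $\det(iT)=-\det T$; hence the sign of $\det T$ records whether the associated Hermitian form is indefinite ($\det T>0$, signature $(1,1)$) or definite ($\det T<0$, signature $(2,0)$). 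This is the step I expect to be the crux: transcribing Shimura's bounded-domain data into signature data and checking that positivity selects a single connected component at each place.

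With this in hand the four cases are read off from the shape of $\prod_v\mathcal D_v$. If $K$ is real quadratic then $K_0=K$ has two real places, at each of which $J$ is an ordinary complex structure on $\R^2$ and positivity selects the upper half plane, so $\mathcal D\cong\HH\times\HH$ is connected of dimension~$2$, giving an irreducible Humbert surface. If $K$ is imaginary quadratic then $K_0=\Q$ has a single place; for $\det T>0$ the signature is $(1,1)$ and $\mathcal D_v\cong\HH$ is the domain of the group $U(1,1)$, yielding an irreducible Shimura curve, while for $\det T<0$ the form is definite, $\mathcal D_v$ is a point, and $S(\M,T)$ is a single surface. If $K$ is a degree-$4$ CM-field then $m(K)=1$ and $K_0$ is real quadratic with two places, at each of which the Hermitian form has rank~$1$ and is therefore definite, so every $\mathcal D_v$ is a point and $S(\M,T)$ is again a single (CM-)surface.

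It remains to verify the supplementary claims about $P_x^{\mathrm a}$. In the Shimura-curve case the analytic representation sees the two conjugate embeddings $\beta,\overline\beta$ of $K$, so $P_x^{\mathrm a}=(X-\beta)(X-\overline\beta)$, which is the minimal polynomial of the degree-$2$ element~$x$. In the definite imaginary-quadratic case the scalar action forces both analytic eigenvalues to coincide, giving $P_x^{\mathrm a}=(X-\beta)^2$; then $K=\Q(\beta)$ acts on $\C^2$ by the scalar~$\beta$, so $\Lambda$ is a $\Z[\beta]$-module and $A$ is isogenous to $E\times E$ with $E=\C/\mathfrak a$ an elliptic curve satisfying $\End_\Q(E)=K$, since any two elliptic curves with complex multiplication by orders in the same imaginary quadratic field are isogenous. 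The main obstacle throughout is the second paragraph: once the local signatures are pinned down and connectedness of each $\mathcal D_v$ is established, the dimensions, irreducibility, and the shapes of $P_x^{\mathrm a}$ all follow formally.
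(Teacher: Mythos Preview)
Your argument is correct and is precisely the approach the paper takes: the paper's own proof is nothing more than a citation of~\cite[\S\S 9.2, 9.6]{birkenhakelange}, and what you have written is a faithful unpacking of the Shimura--theoretic content of those sections. Your local analysis over the real places of $K_0$, the translation of the sign of $\det T$ into the signature of the Hermitian form $iT$ (equivalently, of $S=\delta T$ as used later in Section~\ref{ssec:imagquad}), and the reading of $P_x^{\mathrm a}$ from the eigenvalue pattern of $J$ are all standard and correct; in effect you have supplied the details the paper leaves to the reference.
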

\begin{proof} 
All cases are special cases of the theory in~\cite[\S 9.6]{birkenhakelange},
except the real quadratic case,
which is a special case of~\cite[\S 9.2]{birkenhakelange}.
\end{proof}

\section{Finding all irreducible subvarieties}\label{sec:finding}
Two pairs $(\M_1,T_1)$ and $(\M_2,T_2)$ as in Theorem~\ref{thm:modulispaces}
are called \emph{isomorphic} if there exists a matrix $S \in \GL_{m(K)}(K)$ with
$$
S^{-1} \M_1 = \M_2 \qquad \hbox{and} \qquad S\transpose T_1 S' = T_2.
$$
Isomorphic pairs yield the same moduli subspace inside~$\A_2$, and in this
section we explain how to find all isomorphism classes of pairs $(\M_i,T_i)$.

Moreover, if $\sigma$ is an automorphism of~$K$, then $(\M,T)$ 
and $(\sigma M, \sigma T)$ yield the same moduli space.
Hence, given $K$ and $x \in K$ we can describe all principally polarized 
abelian surfaces $A/\C$ admitting an endomorphism $x$ with $\Q[x]=K$ by
listing the isomorphism classes of pairs~$(\M_i,T_i)$
up to this action of $\mathrm{Aut}(K)$.

First assume that $K = \Q[x]$ is a quadratic field. Depending on~$K$ 
and~$(\M,T)$ we either get a surface, a curve or a point in the moduli space.

\subsection{Real quadratic fields}

\begin{proposition}\label{prop:humirr}
{Let $l$ be prime. Then the variety $\moduli$ contains 
exactly two irreducible Humbert surfaces for $l \equiv 1 \bmod 4$ and 
exactly one irreducible Humbert surface otherwise.}
\end{proposition}
\begin{proof}
This result is the special case $t=1$ of~\cite[Thm.~IX.2.4]{vdgeer},
which states that
the Humbert surface $H_D$ of discriminant~$D$ is irreducible for
every quadratic discriminant~$D$.
For $D=4l$ prime with $l\equiv 1\pmod{4}$, this gives one component $H_{D/4}$
and one component $H_D$. For $D\equiv 4l$ with
$l \not\equiv 1 \pmod{4}$, we have
only the irreducible surface~$H_D$.
\end{proof}

Alternatively, and to illustrate how one can use the theory
of Section~\ref{sec:fieldcase}, we give a self-contained proof
of Proposition~\ref{prop:humirr}.
Note that our claimed number of irreducible Humbert surfaces
equals the number of orders containing $\Z[\sqrt{l}]$.
Therefore, the proposition follows from Theorem~\ref{thm:modulispaces}
and the following lemma.

\begin{lemma}\label{bassring}\label{lem:humbertsurface}
{Let $K=\Q[x]$ be real quadratic and let $\O = \Z[x]$ be an order in~$K$. 
Let $(\M,T)$ be a pair consisting of an $\O$-submodule $\M$ of~$K^2$ 
that is free of rank 4 as a $\Z$-module and a matrix~$T \in \GL_2(K)$ that satisfies 
the conditions of Lemma~\ref{easypol}. Then, up to isomorphism, 
we have
$$
\M=\O_1\times \O_1 \quad \hbox{and} \quad
T = \delta(\O_1)^{-1}\left( \begin{array}{cc} 0 & 1 \\ -1 & 0\end{array}\right),
$$
where $\O_1\supset \O$ is an order of $K$ and $\delta=\delta(\O_1) = 2\alpha-
\Tr\alpha$ generates the different of~$\O_1~=~\Z[\alpha]$.}
\end{lemma}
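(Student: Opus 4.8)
The plan is to classify the pairs $(\M,T)$ over the order $\O=\Z[x]$ in the real quadratic field $K$ up to the isomorphism relation $S^{-1}\M_1=\M_2$, $S\transpose T_1 S'=T_2$. Since $K$ is real quadratic, the Rosati-involution $'$ is the identity on $K$ by Example~\ref{ex:realquad}, so the matrix $T$ is antisymmetric, $T\transpose=-T$, and hence of the form $T=\bigl(\begin{smallmatrix} 0 & r \\ -r & 0\end{smallmatrix}\bigr)$ for some $r\in K^*$. The first step is therefore to record that the data reduces to a rank-$4$ $\O$-lattice $\M\subset K^2$ together with a single scalar $r\in K^*$, and that the integrality and determinant-$1$ conditions of Lemma~\ref{easypol} become conditions relating $r$ to $\M$.

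The second step is to normalize $\M$. Since $\O$ is an order in a quadratic field, the classification of $\O$-modules is governed by the multiplier ring: for any rank-$1$ $\O$-submodule (fractional ideal) $\mathfrak{a}$ of $K$, its multiplier ring $\O_1=\{\alpha\in K: \alpha\mathfrak{a}\subseteq\mathfrak{a}\}$ is an order of $K$ containing $\O$, and $\mathfrak{a}$ is a proper (invertible) $\O_1$-ideal. I would first use a Steinitz-type argument to write $\M=\mathfrak{a}_1\oplus\mathfrak{a}_2$ as a direct sum of two fractional $\O$-ideals in $K^2$, then apply a change of basis $S\in\GL_2(K)$ (which scales each summand by an element of $K^*$) to simplify the ideals. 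The key is that after such scaling one can arrange both summands to be equal to a single order $\O_1$: concretely, the antisymmetric pairing $T$ glues $\mathfrak{a}_1$ and $\mathfrak{a}_2$ so that the determinant-$1$ condition forces the two summands to be `dual' to one another, and the common multiplier ring $\O_1$ is an invariant of the pair. This is where I expect the main work to lie — tracking exactly how the unimodularity constraint $\det E=1$ on $\M\times\M$ pins down $\mathfrak{a}_2$ in terms of $\mathfrak{a}_1$ and $r$, and then using invertibility over $\O_1$ to absorb everything into the normal form $\M=\O_1\oplus\O_1$.

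The third step is to compute $T$ explicitly in the normalized basis. Once $\M=\O_1\oplus\O_1$, the condition that $(u,v)\mapsto\Tr_{K/\Q}(u\transpose T v)$ takes integer values on $\M$ and has determinant $1$ becomes a condition on the single scalar $r$: the pairing $\Tr_{K/\Q}(r\cdot xy)$ must restrict to a unimodular $\Z$-bilinear form on $\O_1\times\O_1$. I would identify this with the requirement that $r\,\O_1$ equal the inverse different (the codifferent) $\mathfrak{d}_{\O_1}^{-1}$, since the trace pairing on an order is unimodular precisely against the codifferent. Writing $\O_1=\Z[\alpha]$ and $\delta(\O_1)=2\alpha-\Tr\alpha$ for the standard generator of the different, this gives $r=\delta(\O_1)^{-1}$ up to a unit, which yields the stated formula for $T$. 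A small verification that the residual freedom (units of $\O_1$ and the $\Aut(K)$-action) does not produce genuinely new pairs completes the normalization, and the finiteness needed for Theorem~\ref{generaltheorem} follows since there are only finitely many orders $\O_1$ between $\O$ and the maximal order $\O_K$.

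The main obstacle, as indicated above, is the gluing step: proving that the unimodular antisymmetric form forces the two Steinitz summands to share a common multiplier ring $\O_1$ and to be inverse fractional ideals for it, so that a single $\GL_2(K)$-transformation reduces $\M$ to $\O_1\oplus\O_1$ while simultaneously sending $T$ to the codifferent-scaled standard symplectic matrix. Everything else is bookkeeping with the trace pairing and the elementary theory of modules over quadratic orders.
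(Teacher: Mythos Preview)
Your proposal is correct and follows essentially the same route as the paper's proof: decompose $\M$ as a direct sum $\mathfrak{a}\oplus\mathfrak{b}$ of fractional $\O$-ideals (the paper cites Bass's theorem for this, which is exactly your ``Steinitz-type argument'' over a quadratic order), use the antisymmetric form $T=\bigl(\begin{smallmatrix}0 & c\\ -c & 0\end{smallmatrix}\bigr)$ and the unimodularity of the trace pairing to conclude that $\mathfrak{b}c$ is the trace dual of $\mathfrak{a}$, whence both ideals share a multiplier ring $\O_1$ and satisfy $\mathfrak{a}\mathfrak{b}c=\delta(\O_1)^{-1}$, and then scale to the normal form. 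Your identification of the ``gluing step'' as the crux is exactly right; the paper handles it in one line by noting that the trace-dual relation forces a common multiplier ring.
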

\begin{proof} 
Let $(\M,T)$ be a pair consisting of an $\O$-module of $\Z$-rank 4 and a 
matrix $T$ satisfying the conditions of Theorem~\ref{thm:modulispaces}.
As $\O$ is quadratic, every
ideal is generated by 2 elements as a $\Z$-module, hence in particular as an $\O$-module.
By Bass~\cite[Prop.~1.5]{bass4}, this implies that every finitely generated projective $\O$-module
is the direct sum of ideals of $\O$.
For $\M$ of rank 4 over $\Z$, this implies (after a suitable choice of basis)
$\M = \mathfrak{a} \times 
\mathfrak{b}$ for fractional
(not neccessarily invertible)
$\O$-ideals $\mathfrak{a},\mathfrak{b}$. 

By Example~\ref{ex:realquad}, we know that $T$ is of the form 
$$\left(\begin{array}{cc} 0 & c \\ -c & 0\end{array}\right)$$ for some~$c\in K^*$.
We derive that $\mathfrak{a}$ has trace dual $\mathfrak{a}^\dagger = 
\mathfrak{b} c$, and as a 
consequence we see that $\mathfrak{a}$ and $\mathfrak{b}$ have the same
multiplier ring~$\O_1$. An argument similar to~\cite[Prop.~3.4]{LaANT} shows
that we have $\mathfrak{a}\mathfrak{a}^\dagger=\delta(\O_1)^{-1}$,
and $\mathfrak{b}$ therefore equals $(\delta(\O_1)c\mathfrak{a})^{-1}$ 
as $\O_1$-ideal.
Without loss of generality, we may therefore assume that
we have $\mathfrak{a} = \O_1$ and hence $\mathfrak{b}c = \O_1^\dagger = 
(1/\delta(\O_1)) \O_1$.
Then after scaling we get
$\M = \O_1 \times \O_1$ and $c = 1/\delta(\O_1)$,
which proves the lemma.
\end{proof}

\subsection{Imaginary quadratic fields}\label{ssec:imagquad}
The situation is similar for \emph{imaginary} quadratic fields~$K$: we again
have to list all pairs~$(\M,T)$ for the order~$\O = \Z[x]$. We may proceed
analogously to the proof of Lemma~\ref{bassring} and write $\M = \mathfrak{a}
\times \mathfrak{b}$ for fractional~$\Z[x]$-ideals~$\mathfrak{a},
\mathfrak{b}$. However, since we have less control over the matrix~$T$, we 
cannot derive that $\M = \O_1\times \O_1$ for an order~$\O_1\supset \O$ in this case.

For $\O = \Z[x]$, let $\delta(\O) = 2x -\Tr(x)$
and let $d(\O)=N_{K/\Q}(\delta(\O))$ be the discriminant.
Then, for the matrix $S=\delta T$, the 
condition $\overline{T}\transpose=-T$ is equivalent to
$$
\overline S\transpose = S,
$$
in other words, to~$S$ being \emph{Hermitian}.
There seems to be a lot of theory related to the classification of
pairs $(\M,S)$ that one could try to apply to the problem of enumerating
these pairs. For example, Hayashida-Nishi~\cite{hayashidanishi}
computes the class number of pairs $(\M, S)$ with $S$ positive
definite, Shimura~\cite{shimura-unitary} computes
the class number of pairs where $\M$ is a so-called
``maximal lattice'', and we will see below how to associate to $S$
the structure of a quaternion algebra on $\M\otimes \Q$, making
$\M$ into an ideal for an order $B$ in this quaternion algebra,
and one might hope to use
results on class numbers of~$B$. However,
such results all require $\O$ (or possibly even $B$) to be a maximal order.
General results that do not require maximal orders seem to be lacking,
hence we will give a more direct approach below. 
\begin{lemma}\label{lem:detisone}
 If $\M\supset \O\times \O$, then we have $[\M:\O\times \O]=\pm \det S$.
\end{lemma}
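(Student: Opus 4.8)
The plan is to reduce the claim to a discriminant computation. Recall the standard index--discriminant relation: for two full lattices $L'\subseteq L$ in a $\Q$-vector space carrying a bilinear form $E$, the Gram determinants satisfy $\disc(E|_{L'}) = [L:L']^2\,\disc(E|_L)$. I would apply this with $L' = \O\times\O$ and $L=\M$. Since the polarization is principal we have $\det E = 1$ on $\M\times\M$, so $\disc(E|_\M)=1$ and therefore
$$
\disc(E|_{\O\times\O}) = [\M:\O\times\O]^2 .
$$
It then suffices to prove that the left-hand side equals $(\det S)^2$.

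To compute $\disc(E|_{\O\times\O})$ I would work with the explicit $\Z$-basis $(e_1, xe_1, e_2, xe_2)$ of $\O\times\O$, where $e_1,e_2$ is the standard $K$-basis of $K^2$ and $\O=\Z[x]=\Z\cdot 1\oplus\Z\cdot x$. Writing $S=\delta T=\left(\begin{smallmatrix} a & b\\ \overline b & c\end{smallmatrix}\right)$ with $a,c\in\Q$ and $b\in K$ (the Hermitian form of the statement), the Riemann form becomes $E(u,v)=\Tr_{K/\Q}(\delta^{-1}u\transpose S\overline v)$, so every Gram entry is a value $\Tr_{K/\Q}(\delta^{-1}\,\beta\,\omega_p\overline{\omega_q})$ with $\beta\in\{a,b,\overline b,c\}$ and $\omega_p,\omega_q\in\{1,x\}$. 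The whole computation therefore rests on a handful of trace identities, which follow from $\overline\delta=-\delta$ and $\delta^2=\disc(\O)=:d$: namely $\Tr_{K/\Q}(\delta^{-1})=0$, $\Tr_{K/\Q}(\delta^{-1}x)=1$, and $\Tr_{K/\Q}(\delta^{-1}\overline x)=-1$ (the last two coming from $\Tr_{K/\Q}(\delta x)=d$ and $\Tr_{K/\Q}(\delta\overline x)=-d$).

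Filling these in, the Gram matrix $G$ is antisymmetric — as it must be, since $E$ is alternating — so $\disc(E|_{\O\times\O})=\det G$ is the square of the Pfaffian of $G$. Writing $b=s+t\delta$ with $s,t\in\Q$, a direct evaluation of the three Pfaffian terms gives
$$
\mathrm{Pf}(G)=ac-s^2+t^2 d = ac - N_{K/\Q}(b) = \det S,
$$
where I used $N_{K/\Q}(b)=b\overline b=s^2-t^2\delta^2=s^2-t^2d$. Hence $\disc(E|_{\O\times\O})=(\det S)^2$, and combining with the displayed index formula yields $[\M:\O\times\O]^2=(\det S)^2$, i.e.\ $[\M:\O\times\O]=\pm\det S$. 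The only genuine work is the bookkeeping in assembling $G$ and checking the Pfaffian collapses to $ac-N_{K/\Q}(b)$; the one subtlety to watch is that $\{1,x\}$, and not $\{1,\delta\}$ (which has index $2$ in $\O$ because $\delta=2x-\Tr x$), is the correct $\Z$-basis of $\O$, so that the index being measured really is against $\O\times\O$.
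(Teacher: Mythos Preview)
Your proof is correct and follows the same strategy as the paper: compute the determinant of $E$ on $\O\times\O$ to be $(\det S)^2$, combine with the index--discriminant relation, and use principality of the polarization to conclude. The paper simply asserts that this determinant ``can be computed to be $(\det S)^2$'', whereas you supply the explicit Pfaffian calculation; so your argument is a fleshed-out version of the same proof rather than a different one.
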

\begin{proof}
The 
determinant of $E:(u,v)\mapsto\Tr_{K/\Q}(u\transpose Tv')$
with respect to a $\Z$-basis of $\mathcal{O}\times\O$
can be computed to be
$(\det S)^2$.
With respect to a $\Z$-basis of $\M$, it therefore 
equals $[\M : \O\times\O]^{-2}(\det S)^2$.
On the other hand, as the polarization is principal,
this determinant is known to be~$1$, which
proves Lemma~\ref{lem:detisone}.
\end{proof}

Applying complex conjugation to $(\M,T)$ if needed, we may assume that $S$
is not negative definite.
Write $\Phi(u,v)=u\transpose S v'$ and $\norm(u) = \Phi(u,u)$.
We distinguish two cases: the case where $\norm$ takes the value
$0$ on $\M$ and the case where it does not.

\subsubsection*{If $\mu$ does not take the value~$0$.}\label{sssec:notvalue0}
Take a $K$-basis $b_1$, $b_2$ of $\M\otimes \Q$ with $b_1$, $b_2\in \M$,
and choose this basis such that $\abs{\mu(b_1)}$ is minimal
over all $b_1\in \M$,
and $\abs{\mu(b_2)}$ is minimal among all $b_2\in \M$ that
satisfy $\Phi(b_1,b_2)=0$.

Let $a=\norm(b_1)$ and $b=\norm(b_2)$, and identify
$\M\otimes \Q$ with $K^2$
via the basis $b_1$,~$b_2$.
\begin{lemma}\label{lem:boundsforthatbasis}
We have
\begin{enumerate}
\item\label{itm:boundsforthatbasis2} $$\mathcal{O}\times\mathcal{O}\subset \M \subset (\frac{1}{a}\mathcal{O})\times(\frac{1}{b}\mathcal{O}),$$
where for both inclusions the
index is $\abs{ab}$.
\item\label{itm:boundsforthatbasis1} $$1\leq |a|\leq
\frac{2\sqrt{t}}{\pi} {|d(\O)|^{1/2}}
\quad\mbox{and}\quad |a|\leq |b|\leq \frac{2}{\pi}|d(\O)|^{1/2}|a|,$$
with $t=1$ if $S$ is indefinite, and $t=2$ if $S$ is definite.
\end{enumerate}
\end{lemma}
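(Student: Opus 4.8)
The plan is to diagonalise the Hermitian matrix $S=\delta T$ in the basis $b_1,b_2$, deduce the lattice sandwich in~(1) from the $\O$-module structure of $\M$ together with the integrality of $E$, and obtain the estimates in~(2) from the geometry of numbers. To begin, note that $\mu$ is $\Q$-valued on $\M\otimes\Q=K^2$: since $S$ is Hermitian one has $\overline{\mu(u)}=\mu(u)$, so $\mu(u)\in K\cap\R=\Q$. As $\mu$ then takes values in a finitely generated subgroup of $\Q$ and, by the hypothesis of the present case, does not vanish on $\M\setminus\{0\}$, its nonzero absolute values are bounded away from $0$ and the minima defining $b_1,b_2$ are attained. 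Finally, $\Phi(b_1,b_2)=0$ forces the off-diagonal entries of $S$ to vanish, so $S=\mathrm{diag}(a,b)$ with $a=\mu(b_1)$, $b=\mu(b_2)$, and $\mu(u)=a\,N(u_1)+b\,N(u_2)$, where $N(z)=z\bar z$ is the norm form on $K\otimes\R$.

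For part~(1), $\O\times\O\subseteq\M$ is immediate from $b_1,b_2\in\M$. For the reverse inclusion I would exploit that $\M$ is an $\O$-module and that $E(u,v)=\Tr_{K/\Q}(\delta^{-1}\Phi(u,v))$ is $\Z$-valued on $\M$. For $v=(v_1,v_2)\in\M$ and $\omega\in\O$ we have $\omega v\in\M$, hence $E(b_1,\omega v)=\Tr_{K/\Q}(\delta^{-1}\bar\omega\,a\bar v_1)\in\Z$; letting $\omega$ run through $\O$, so that $\bar\omega$ runs through $\bar\O=\O$, shows $\delta^{-1}a\bar v_1$ lies in the trace dual $\O^\dagger=\delta^{-1}\O$, whence $a\bar v_1\in\O$ and (as $a\in\Q$) $v_1\in\frac1a\O$. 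The same computation with $b_2$ gives $v_2\in\frac1b\O$, so $\M\subseteq(\frac1a\O)\times(\frac1b\O)$. Comparing this with $\O\times\O\subseteq\M$ coordinatewise forces $a,b\in\O\cap\Q=\Z$, so the outer sandwich has index $(ab)^2$. Since Lemma~\ref{lem:detisone} gives $[\M:\O\times\O]=\abs{\det S}=\abs{ab}$, the complementary index $[(\frac1a\O)\times(\frac1b\O):\M]$ is also $\abs{ab}$, proving~(1).

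For part~(2) the lower bounds are immediate: $a\in\Z\setminus\{0\}$ gives $\abs a\ge1$, and $b_2\in\M$ with $b_1$ globally minimal gives $\abs a\le\abs b$. To bound $\abs a$ from above I would apply Minkowski's convex-body theorem to the symmetric convex box $B_c=\{u:\,a\,N(u_1)\le c,\ \abs b\,N(u_2)\le c\}$, on which $\abs{\mu}\le t\,c$ with $t=1$ when $S$ is indefinite (the two terms have opposite signs) and $t=2$ when $S$ is definite. Computing $\mathrm{vol}(B_c)/\mathrm{covol}(\M)$ by passing through $\O\times\O$ — using that the norm form has Gram determinant $d(\O)/4$ on $\O$ and that $[\M:\O\times\O]=\abs{ab}$ — gives the index-free value $4\pi^2c^2/d(\O)$, which reaches $2^4$ at $c=2\sqrt{d(\O)}/\pi$. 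The resulting nonzero lattice point has $\abs\mu\le t\,c$, so $\abs a=\min_{\M}\abs\mu\le t\,\tfrac{2\sqrt{d(\O)}}{\pi}$, as claimed.

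The bound on $\abs b$ is the delicate point, and the main obstacle, because $b_2$ is not a successive minimum of $\M$ but the minimum on the orthogonal complement $b_1^\perp=\{u_1=0\}$. Writing $\Lambda_2=\M\cap b_1^\perp=\{0\}\times J$ with $\O\subseteq J\subseteq\frac1b\O$, the minimality of $b_2$ reads $\min_{v\in J}N(v)=N(1)=1$, and two-dimensional Minkowski applied to $(J,N)$, together with $\det_J N=(d(\O)/4)/[J:\O]^2$, bounds $j:=[J:\O]$ above by $2\sqrt{d(\O)}/\pi$. Letting $I$ be the first-coordinate projection of $\M$, the projection $\M\to I$ has kernel $\Lambda_2$, giving the index identity $[I:\O]\cdot j=[\M:\O\times\O]=\abs{ab}$, and the crude estimate $[I:\O]\le[\frac1a\O:\O]=a^2$ then gives $\abs b=[I:\O]\,j/\abs a\le\abs a\,j$. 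Combining with the bounds already obtained for $\abs a$ and $j$ produces $\abs b\le t\,\tfrac{4d(\O)}{\pi^2}$, which a fortiori implies the estimate stated in the lemma. The one genuinely nontrivial ingredient throughout is this bookkeeping, via Lemma~\ref{lem:detisone}, that ties the orthogonal-complement lattice $J$ back to the full module $\M$.
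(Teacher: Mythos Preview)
Your argument is correct. For part~(1) and the upper bound on $|a|$ you proceed exactly as the paper does, only filling in more detail: the paper's one-line claim that $\Phi(w,b_1)=au\in\O$ for $w=(u,v)\in\M$ is precisely your trace-dual computation, and your observation that $a,b\in\O\cap\Q=\Z$ is implicit in the paper as well.

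For the bound on $|b|$, however, you take a genuinely different route. The paper applies Minkowski's theorem once more to the full rank-$4$ lattice $\M\subset\C^2$, now with a convex body $W$ whose projection to the first coordinate is the open box $|\Re u|<1/|a|$, $|\Im u|<\sqrt{d(\O)}/(2|a|)$. Since any $(u,v)\in\M$ has $au\in\O$ by part~(1), these strict inequalities force $au=0$, so the nonzero lattice point produced by Minkowski actually lies in $b_1^\perp$ and competes directly with~$b_2$. You instead work intrinsically with the rank-$2$ sublattice $\{0\}\times J=\M\cap b_1^\perp$: a two-dimensional Minkowski argument bounds $j=[J:\O]$, and then the exact-sequence identity $[I:\O]\cdot j=[\M:\O\times\O]=|ab|$ together with the crude estimate $[I:\O]\le a^2$ recovers $|b|\le |a|\,j$. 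Your approach separates the two coordinates entirely and in fact yields the sharper bound $|b|\le 4t\,d(\O)/\pi^2$, strictly below the paper's $4t\,d(\O)^{3/2}/\pi^2+(4+d(\O))/4$; the paper's approach has the virtue of staying within a single four-dimensional geometry-of-numbers framework and avoiding the index bookkeeping.
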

\begin{proof}
Note that $b_1=(1,0)\transpose$ and $b_2=(0,1)\transpose$
are in the $\mathcal{O}$-module $\M$, hence $\M\supset \mathcal{O} \times\mathcal{O}$.
Now take any element $(u, v)\transpose$ in~$\M$. Then we have 
$au = \Phi(u,b_1)\in\mathcal{O}$ and $bv = \Phi(u,b_2)\in\mathcal{O}$,
which proves the inclusions in~(\ref{itm:boundsforthatbasis2}).
The index of both inclusions is $\abs{ab}$ by Lemma~\ref{lem:detisone}.

Let $V\subset \C^2$ be the set of vectors $(u,v)\transpose$ satisfying
\begin{align*}
\max\{|a| u\overline{u}, |b|v\overline{v}\} \leq B &\quad
\text{if $S$ is indefinite, and}\\
a u\overline{u} + b v\overline{v} \leq B &\quad
\text{if $S$ is positive definite}.
\end{align*}
It is closed, convex, and symmetric, and has volume $s \pi^2 B^2|ab|^{-1}$,
where $s=1$ in the indefinite case and $s=1/2$ in the definite case.

For $B=2|d(\mathcal{O})|^{1/2}\pi^{-1}s^{-1/2}$,
this volume equals
$ 4|ab|^{-1}|d(\O)|=16\mathrm{covol}(\M)$.
By Minkowski's convex body theorem, this implies that $V\cap \M$ contains
a non-zero element~$z = (u,v)\transpose$. We find $|\norm(z)| = |au\overline{u}+bv\overline{v}|\leq B$,
hence by choice of $b_1$ also $|a|\leq B$.

Finally, let $$ W = \{ (u,v)\transpose\in \C^2 : |\Re(u)|<\frac{1}{|a|}, |\Im(u)|<\frac{|d(\O)|^{1/2}}{2|a|}, |b|v\overline{v} \leq C\},$$
so $$\mathrm{vol}(W) = 2\pi \frac{|d(\O)|^{1/2}C}{|a^2b|},$$
which for $C=2|d(\O)|^{1/2}|a|\pi^{-1}$ is equal to
$4|d(\O)|/|ab|= 16\mathrm{covol}(\M)$.
By Minkowski's convex body theorem, this implies that $V\cap \M$ contains
a non-zero element~$z = (u,v)\transpose$.
We find $u=0$, hence $\Phi(b_1,z)=bv\not=0$
and $|\mu(z)|=|b|v\overline{v}\leq C$,
hence by choice of $b_2$, the number $|b|$ is below this bound, which proves
part~(\ref{itm:boundsforthatbasis1}).
\end{proof}

\subsubsection*{If $\mu$ takes the value 0}
Let $b_1\in \M$ with $\mu(b_1)=0$ be such that 
$zb_1\in \M$ for $z\in K$ implies $\abs{z}\geq 1$.
Then take $b_2\in \M$ such that
$\alpha=\Phi(b_1,b_2)\in \O$ has minimal absolute value
among all $b_2$ linearly independent of $b_1$,
and such that $b=\mu(b_2) \in\Z$ has minimal absolute value among all 
$b_2$ for the given value of~$\alpha$.

Identify $\M\otimes \Q$ with $K^2$ via the $K$-basis $b_1$, $b_2$.
\begin{lemma}\label{mu0}
 We have
\begin{enumerate}
 \item $\M\supset \O\times \O$ with index $N(\alpha)$,
 \item $1\leq N(\alpha)\leq 4\pi^{-2}|d(\O)|\quad\mbox{and}\quad
\abs{b}\leq |\alpha|\cdot  |d(\O)|^{1/2}$.
\end{enumerate}
\end{lemma}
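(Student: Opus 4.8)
The plan is to read all four assertions off the Gram matrix of $\Phi$ in the basis $b_1,b_2$ and then to convert the two quantitative bounds into geometry of numbers in the order $\O$. First I would record the shape of $S$. Since $\M$ is an $\O$-module and $E(u,v)=\Tr_{K/\Q}(\Phi(u,v)/\delta(\O))$ is $\Z$-valued on $\M$, for fixed $u,v\in\M$ the element $\Phi(u,v)/\delta(\O)$ pairs integrally under $\Tr_{K/\Q}$ with all of $\O$; hence it lies in the codifferent $\delta(\O)^{-1}\O$, i.e.\ $\Phi(u,v)\in\O$. In particular $a=\Phi(b_1,b_2)\in\O$ and $b=\mu(b_2)=\Phi(b_2,b_2)\in\O\cap\R=\Z$. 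As $\mu(b_1)=0$ and $\Phi$ is Hermitian, in the basis $b_1,b_2$ the matrix is $S=\bigl(\begin{smallmatrix}0&a\\\bar a&b\end{smallmatrix}\bigr)$, so $\det S=-N(a)$. Because $b_1,b_2\in\M$ and $\M$ is an $\O$-module, $\O\times\O\subseteq\M$, and Lemma~\ref{lem:detisone} gives $[\M:\O\times\O]=\abs{\det S}=N(a)$, which is part~(1). Nondegeneracy of $\Phi$ forces $a\neq 0$ (else $b_1$ lies in the radical), so $a$ is a nonzero element of $\O$ and $N(a)\ge 1$.

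For the bound on $b$ I would use the freedom to translate $b_2$ along the isotropic line. For $n\in\Z\subseteq\O\subseteq\mathfrak c:=\{z\in K:zb_1\in\M\}$ the vector $b_2+nb_1$ lies in $\M$ and still satisfies $\Phi(b_1,b_2+nb_1)=a$, while $\mu(b_2+nb_1)=b+n\Tr_{K/\Q}(a)$. By the minimality in the choice of $b_2$ we have $\abs{b}\le\abs{b+n\Tr_{K/\Q}(a)}$ for every $n$; choosing $n$ so as to move $-b$ to the nearest multiple of $\Tr_{K/\Q}(a)$ gives $\abs{b}\le\tfrac12\abs{\Tr_{K/\Q}(a)}$, and $\tfrac12\abs{\Tr_{K/\Q}(a)}=\abs{\Re(a)}\le\abs{a}$ completes the chain. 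The degenerate case $\Tr_{K/\Q}(a)=0$ needs a separate remark, since then integer translates do not move $\mu(b_2)$; there I would translate by $xb_1$ instead (using $x\in\O\subseteq\mathfrak c$), or check directly that $b=0$.

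The substantive assertion is the upper bound $N(a)\le\pi^{-2}d(\O)$, which I would obtain from geometry of numbers. Since $\Phi(b_1,w)=0$ exactly when $w\in Kb_1$, the element $a$ is a shortest nonzero vector of the integral ideal $\mathfrak d:=\Phi(b_1,\M)\subseteq\O$, so $N(a)=\lambda_1(\mathfrak d)^2$, while the primitivity of $b_1$ says exactly that the fractional ideal $\mathfrak c\supseteq\O$ has shortest vector of length $1$. The link between the two is the unimodularity of $E$: the sublattice $\mathfrak c\,b_1=\M\cap Kb_1$ is a Lagrangian, and the map $w\mapsto\Phi(b_1,w)/\delta(\O)$ identifies $\M/\mathfrak c\,b_1$ with the trace dual of $\mathfrak c$; unwinding this gives $\mathfrak d=\mathfrak c^{-1}$ and $\mathrm{covol}(\mathfrak d)=N(\mathfrak c)^{-1}\sqrt{d(\O)}/2$. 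Minkowski's convex body theorem applied to a disc, combined with $\lambda_1(\mathfrak c)=1$, bounds $N(\mathfrak c)$ from below, hence $\mathrm{covol}(\mathfrak d)$ from above, and a second application to $\mathfrak d$ bounds $N(a)=\lambda_1(\mathfrak d)^2$ from above by a constant times $d(\O)$.

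I expect this last constant to be the main obstacle. Two independent disc applications of Minkowski's theorem only yield a bound of the shape $c\,d(\O)$ with $c=4/\pi^{2}$, so tightening $c$ down to $\pi^{-2}$ is the delicate step. To get the sharp constant I would avoid bounding the two first minima $\lambda_1(\mathfrak c)$ and $\lambda_1(\mathfrak d)$ separately and instead exploit the reciprocity $\mathfrak d=\mathfrak c^{-1}$ through a single adapted (non-round) convex body, in the spirit of the region $W$ used in the proof of Lemma~\ref{lem:boundsforthatbasis}; I would also double-check the interplay with the principal-polarization (positivity) constraint, which is exactly what rules out the small-discriminant configurations that would otherwise contradict the sharp form of the inequality.
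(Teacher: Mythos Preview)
Your treatment of part~(1) and of the bound on $|b|$ matches the paper's: both use $b_1,b_2\in\M$ together with Lemma~\ref{lem:detisone} for the index, and both translate $b_2$ by integer multiples of $b_1$ to reduce $|b|$ modulo $\Tr_{K/\Q}(a)$.

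For the upper bound on $N(a)$, however, your ideal-theoretic detour through $\mathfrak c$ and $\mathfrak d=\mathfrak c^{-1}$ is more complicated than what the paper does, and you correctly flag that it leaves you short of the stated constant without a further idea you do not actually supply. The paper avoids splitting into rank-$2$ lattices altogether: it applies Minkowski's theorem \emph{once} to the rank-$4$ lattice $\M\subset\C^2$ itself, with the polydisc
\[
V=\{(u,v)\in\C^2:\ |u|^2<1,\ |v|^2<1\}
\]
as the convex body. The covolume of $\M$ is $d(\O)/(4N(a))$ by part~(1). If Minkowski produces a nonzero $z=ub_1+vb_2\in\M\cap V$, then either $v=0$, in which case $z=ub_1$ with $0<|u|<1$ contradicts the primitivity of $b_1$, or $v\neq 0$, in which case $\Phi(b_1,z)=a\overline v$ satisfies $0<|a\overline v|<|a|$ with $z$ linearly independent of $b_1$, contradicting the minimality of $|a|$. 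Thus the two minimality hypotheses on $b_1$ and on $a$ are used \emph{simultaneously} to exclude the single Minkowski lattice point, rather than being fed into two separate applications in $\C$. This is exactly the ``single convex body'' you were reaching for in your last paragraph, but it is an ordinary (round) polydisc in $\C^2$, not an asymmetric region of the $W$-type from Lemma~\ref{lem:boundsforthatbasis}, and it requires no ideal duality at all.
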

\begin{proof}
  As the basis elements $b_1=(1,0)$ and $b_2=(0,1)$ are in $\M$,
  so is $\O\times \O$. By Lemma~\ref{lem:detisone},
  the index is $\pm \det S$. As we have $-\det S= N(\alpha)>0$,
  this proves~(1).

  Let $V = \{(u,v)\in \C^2 : N(u), N(v) < 1\}$, which has volume
  $\pi^2$ and note that $\M\subset \C^2$ has covolume
  $$\frac{|d(\O)|}{4[\M:\O\times\O]}=\frac{|d(\O)|}{4N(\alpha)}.$$
  By Minkowski's convex body theorem, if $\pi^2>4|d(\O)|/N(\alpha)$, then
  there exists a non-zero $z = ub_1+vb_2\in \M$ with $N(u)$, $N(v)<1$.
  Here $v=0$ would contradict minimality of~$b_1$
  (as $N(u)<1$),
  hence~$z$ is linearly independent of~$b_1$.
  Now $0<N(v)<1$ contradicts minimality of~$\alpha$.
  This proves $\pi^2\leq 4|d(\O)|/N(\alpha)$,
  which gives our upper bound on $N(\alpha)$.
  
  By translation of $b_2$ via an $\O$-multiple $\gamma b_1$
  of~$b_1$, we see
  $\abs{b}\leq \frac{1}{2}\abs{\Tr_{K/\Q}(\alpha\gamma)}$
  whenever the trace of~$\alpha\gamma$ is non-zero.
  To get~(2), we apply this to $\gamma=1$ or $\gamma=x$
  with $\O=\Z[x]$.
\end{proof}

We summarize the results in this section with the following approach to
find all isomorphism classes of pairs~$(\M,T)$. Lemmas~\ref{mu0}
and~\ref{lem:boundsforthatbasis} give a finite set of possibilities
for $a,b$ or $\alpha,b$ in terms of the discriminant of~$\Z[x]$.
For each, there is a finite set of possibilities for~$\M$,
as the index
$[\M : \Z[x] \times \Z[x]]$
is given in the appropriate lemma.
In particular, we can list all
possibilities for~$\M$. The matrix~$T$ then equals
\[\delta^{-1}\left(\begin{array}{cc} a & 0 \\ 0 & b\end{array}\right)
\quad\mbox{or}\quad
\delta^{-1}\left(\begin{array}{cc} 0 & \alpha \\
\overline{\alpha} & b\end{array}\right)\]
where $\delta = 2x-\Tr(x)$ generates the different ideal of~$\Z[x]$.

\subsection{Degree-4 CM-fields}\label{sec:degree4}
Let $K$ be a degree-$4$ CM-field, and fix an order $\O \subset K$ with
$\overline\O = \O$ that contains the order~$\Z[x]$.

We want to enumerate the isomorphism classes of 
pairs $(\mathcal{M},T)$, where
$\M\subset K$ is an $\mathcal{O}$-module of rank $4$ over $\Z$
and $T=(\xi)$ is a $1\times 1$-matrix such that 
\begin{enumerate}
\item \label{itm:totneg} $\xi^2$ is totally negative;
\item \label{itm:detone} the bilinear form $E:\mathcal{M}\times\mathcal{M}\rightarrow\Q$ given 
      by $E(x,y)=\Tr_{K/\Q}(\xi\overline{x}y)$ has image $\Z$ and determinant~$1$.
\end{enumerate}
Recall that we call $(\mathcal{M}_1,\xi_1)$ isomorphic to $(\mathcal{M}_2,\xi_2)$
if and only if there exists $u\in K^*$ satisfying
$u^{-1}\mathcal{M}_1=\mathcal{M}_2$ and $u\overline{u}\xi_1=\xi_2$. 

The fact that the image of the bilinear form is in $\Z$ is equivalent
to the inclusion of $\mathcal{O}$-ideals
$\xi\overline{\mathcal{M}}\subseteq \mathcal{M}^\dagger$, where
$\mathcal{M}^\dagger$ is the dual of $\mathcal{M}$ for the trace
form~$(x,y)\mapsto \Tr_{K/\Q}(xy)$.
The determinant of $E$ is $1$ if and only if this inclusion is an equality,
so we need to enumerate pairs $(\mathcal{M},\xi)$ satisfying
$\xi\overline{\mathcal{M}}= \mathcal{M}^dagger$,
up to isomorphism. (This observation is a generalization of
Theorems 3.13 and 3.14 of~\cite{spallek}.)
For any fractional $\mathcal{O}$-ideal $\mathcal{M}$, 
write $\mathcal{M}_K=\mathcal{M}\otimes_{\mathcal{O}}\mathcal{O}_K$.
If $e$ denotes the exponent of the abelian group $\mathcal{O}_K/\mathcal{O}$,
then for every $\mathcal{M}$, we have
$e\mathcal{M}_K\subseteq \mathcal{M}\subseteq \mathcal{M}_K$.
As a first step, we compute a complete set $C$ of representatives
of the class group of the maximal order $\mathcal{O}_K$ of~$K$.
For any $\mathcal{N}\in C$, it is a finite computation to find all $\mathcal{M}$
with $\mathcal{M}_K=\mathcal{N}$. 
This gives us a complete set of representatives of the ideals $\mathcal{M}$
up to multiplication by~$K^*$.

The next step is to find all possibilities for $\xi$ for each $\mathcal{M}$.
Let 
$
\mathcal{O}(\mathcal{M})=\{x\in K : x\mathcal{M}\subseteq \mathcal{M}\}
$ 
be the multiplier ring of~$\mathcal{M}$
and let $K_0$ be the real quadratic subfield of $K$.
For every pair $(\mathcal{M},\xi)$, we have
$$
\begin{array}{rrrrrrrrrr}
  \xi\overline{\mathcal{M}_K}&\supseteq &\xi\overline{\mathcal{M}}&=&\mathcal{M}^\dagger&\supseteq &
        \mathcal{M}_K^\dagger&=&\mathcal{M}_K^{-1}\mathcal{D}_{K/\Q}^{-1} &\mbox{and}\\
e\xi\overline{\mathcal{M}_K}&\subseteq& \xi\overline{\mathcal{M}}&=&\mathcal{M}^\dagger& \subseteq      &
         e^{-1}\mathcal{M}_K^\dagger&=&e^{-1}\mathcal{M}_K^{-1}\mathcal{D}_{K/\Q}^{-1}\rlap{,}
\end{array}
$$
so for every valuation $v$ of $K$, we have
$$ -2v(e)\leq  v(\xi)+v(\mathcal{M}_K\overline{\mathcal{M}_K}\mathcal{D}_{K/\Q})\leq 0.$$
This gives finitely many candidate ideals $\xi\mathcal{O}_K$.
For each,
we can check whether it is principal and
generated by an element $\xi\in K$ such that $\xi^2$ is totally negative.
If so, let $U$ be a complete set of representatives of those classes $\overline{u}$
in the finite 
group $\mathcal{O}_{K_0}^*/N_{K/K_0}(\mathcal{O}(\mathcal{M})^*)$
for which $(\mathcal{M},u\xi)$ satisfies (\ref{itm:totneg}) and (\ref{itm:detone}) above.
The pairs $(\mathcal{M},u\xi)$ cover all isomorphism classes.

\section{Finding models}\label{sec:modpol}
In this section we explain a method to find models for the subvarieties
corresponding to pairs~$(\M,T)$. First we pick coordinates for our
moduli space.
By Torelli's theorem~\cite[Theorem 11.1.7]{birkenhakelange},
the variety $\A_2$ contains the moduli space
of smooth curves of genus two as an open subvariety.
We let $I_2$, $I_4$, $I_6$, $I_{10}$ be the \emph{Igusa-Clebsch invariants},
denoted $A$, $B$, $C$, $D$ by Igusa~\cite{igusa}.
Igusa shows that this gives an injection
$\M_2(\C)\rightarrow \mathbf{P}^{(2,4,6,10)}(\C)$
into weighted projective space, and its image is the $(I_{10}\not=0)$-locus.
We can describe points in $\M_2\subset\A_2$ either by smooth
curves of genus two, or by the values of the~$I_k$.

By Weil's theorem~\cite[Corollary 11.8.2]{birkenhakelange},
the complement $\A_2\setminus\M_2$ consists
only of points corresponding to products of elliptic curves with the product
polarization. We will therefore describe points in $\A_2\setminus \M_2$
by products of elliptic curves.

\subsection{Surfaces in the moduli space}
By Proposition~\ref{prop:humirr}, the space $\moduli$ contains exactly two
irreducible Humbert surfaces for $l \equiv 1 \bmod 4$ and exactly one
irreducible Humbert surface otherwise. Due to the size of the resulting
models, Humbert surfaces can only be computed for small discriminants. As
an example, the
surface of discriminant~$8$ that occurs for $l=2$ is computed 
in~\cite{gruenewalddata}. 

\subsection{Curves in the moduli space}
By Theorem~\ref{thm:modulispaces} we obtain a Shimura curve if the field
$K = \Q(x)$ is imaginary quadratic and, using the notation from the
theorem, if we have $\det(T)>0$. Using the method from Section~\ref{ssec:imagquad}
we find all isomorphism classes of pairs~$(\M,T)$. Fix such a pair. 

The matrix~$S = \delta T$ is by assumption indefinite, and 
we give $B=\M\otimes \Q$ a quaternion algebra structure induced by $S$ as
defined by e.g.~\cite{shimura-unitary}.
This quaternion algebra structure is easiest described
by choosing a $K$-basis of $B$ that diagonalizes $S$
as
$$\left(\begin{array}{cc} 1 & 0 \\ 0 & s\end{array}\right).$$
Denote the first basis element of $B$ by $1$ and the second by~$\rho$,
so elements of $B$ are written as $x+y\rho$ with $x,y\in K$.
Then $B$ becomes a quaternion algebra with $\rho^2 = -s > 0$,
$\rho u=\overline{u}\rho$ for $u\in K$.
The quaternion algebra $B$ comes equipped with a positive definite 
involution $\dagger$ given by $(x+y\rho)^\dagger = \overline{x}+ y\rho$.

The quaternion algebra $B$ acts on $\mathcal{M}\otimes \Q$
(that is, on itself) by left multiplication,
and Shimura showed that
this induces an embedding $\iota:B\rightarrow \End(A)\otimes \Q$.
Moreover, the Rosati-involution on $\End(A)$ induces $\dagger$ on~$B$,
and $R = \iota^{-1}(\End(A))$ consists exactly of those elements of $B$
that map $\M$ to itself.

If we now take a maximal real quadratic subring $R_1\subset R$
on which $\dagger$ is trivial, then the Shimura curve corresponding
to $(\M, S)$ is contained in the Humbert surface of $R_1$.
If we take two such subrings $R_1$ and $R_2$ with non-isomorphic
fraction fields, then the intersection of the corresponding
Humbert surfaces is a finite union of irreducible curves,
and one of these
curves is the Shimura curve corresponding to~$(\M,S)$.

\subsection{CM-points for quartic fields}\label{sec:quarticmodel}
Section~\ref{sec:degree4} gives an algorithm to compute all pairs $(\M,T)=
(\M,(\xi))$ in case $\Q(x)$ is a degree four CM-field. The points corresponding
to $(M,(\xi))$ are abelian surfaces that have complex multiplication by~$(\M,(\xi))$.
Computing these abelian surfaces is a reasonably well studied problem.
From a pair $(M,(\xi))$, one obtains a complex
analytic abelian surface,
whose \emph{absolute Igusa invariants}
$I_2^5/I_{10}$, $I_2^3I_4/I_{10}$, $I_2^2I_6/I_{10}$
are algebraic numbers
and can be evaluated numerically.
With sufficient precision, one can make a guess for the exact algebraic
value, which is completely reliable in practice.
Unfortunately, the output of such algorithms is
currently proven to be correct
only in case~$\Q(x)$ contains no imaginary quadratic
subfield and $\Z[x]$ is the maximal 
order~\cite{goren-lauter,streng}.

\subsection{Other points in the moduli space}
Besides CM-points, there are two ways that we can get a point inside the 
moduli space. Indeed, if $x$ denotes an $(l,l)$-endomorphism, then we find
a point inside the moduli space if either $\Q[x]$ is not a field, or if
$\Q(x)$ is imaginary quadratic and, using the notation from 
Theorem~\ref{thm:modulispaces}, we have $\det(T)<0$. 

\subsubsection{If $\Q[x]$ is \emph{not} a field,}
Lemma~\ref{nonfieldlemma} gives 
restrictions on~$x$. Indeed, using
the notation of this lemma, we note that there are only finitely many pairs
$(\beta_1,\beta_2)$ of imaginary quadratic elements of complex absolute 
value~$\sqrt{l}$. For each pair, there are finitely many isomorphism classes 
of complex elliptic curves~$E_i$ with $\End(E_i) \supseteq \Z[\beta_i]$. 
Finally, for each pair of curves $(E_1,E_2)$ there are finitely many 
possibilities for the maximal isotropic
kernel~$\Gamma$ of~$\lambda$.
We can list all triples
$(E_1,E_2,\Gamma)$ to describe the points in the moduli space. 

\begin{remark} Not all~$\Gamma$ are allowed. Indeed, we only pick those
$\Gamma$ for which $x \in \End(E_1 \times E_2)$ induces an~$(l,l)$-endomorphism
on the quotient $(E_1\times E_2)/\Gamma$. This happens precisely
when $x(\Gamma) \subseteq \Gamma$.
\end{remark}

In many cases, we can obtain a model of $(E_1\times E_2)/\Gamma$
by \emph{glueing} the elliptic curves along~$\Gamma$. We
recall the general setup in the remainder of this subsection, and refer
to Section~\ref{explicitglue} for examples.

Let $E_1$ and $E_2$ be complex elliptic curves and let $n \in \Z_{>1 }$ be an
integer. Let $\psi: E_1[n] \isar E_2[n]$ be
an \emph{anti-isometry}
with respect to the Weil pairing,
i.e., an isomorphism such that
 $e_n(\psi(P),\psi(Q))=e_n(P,Q)^{-1}$ holds
for all $P,Q\in E_1[n]$.
If $\psi$ is an anti-isometry, then
$\graph\psi$ is maximally isotropic with respect
to the Weil pairing on $(E_1\times E_2)[n]$.
By Lemma~\ref{isotropicWeil},
this implies that $\graph\psi$ is the
kernel of an $(n,n)$-isogeny to a unique
principally polarized abelian variety
$A = (E_1 \times E_2) / \graph\psi$.
We say that we \emph{glue} $E_1$ and $E_2$ along 
their $n$-torsion via~$\psi$
and call the $(n,n)$-isogeny $E_1\times E_2\rightarrow A$
an $n$-glueing.
The following shows that glueings
are a very general kind of $(n,n)$-isogenies.
\begin{lemma}\label{lem:glueingseverywhere}
  Let $E_1,E_2$ be elliptic curves and $n$ a prime number.
  Then every $(n,n)$-isogeny $E_1\times E_2\rightarrow A$
  for any $A$ is either an $n$-glueing or of the form
  $E_1\times E_2\rightarrow (E_1/C_1)\times (E_2/C_2)$
  for $C_i\subset E_i$ of order~$n$.
\end{lemma}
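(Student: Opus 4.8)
The statement concerns $(n,n)$-isogenies $f : E_1 \times E_2 \to A$ for prime $n$. The key structural fact I would exploit is Lemma~\ref{isotropicWeil}: the kernel $G = \ker f$ is a maximally isotropic subgroup of $(E_1\times E_2)[n]$ with respect to the Weil pairing. Since $n$ is prime, $(E_1\times E_2)[n] \cong (\Z/n\Z)^4$ is a $4$-dimensional symplectic space over $\Fl$, and $G$ is a maximal isotropic subspace, hence $2$-dimensional. My plan is to classify such $G$ according to how they intersect the two natural subspaces $E_1[n] = E_1[n]\times 0$ and $E_2[n] = 0\times E_2[n]$.

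The core of the argument is a case analysis on the intersections $G\cap E_1[n]$ and $G\cap E_2[n]$. First I would observe that the Weil pairing on $(E_1\times E_2)[n]$ is the orthogonal sum of the Weil pairings on $E_1[n]$ and $E_2[n]$, each of which is a nondegenerate alternating form on a $2$-dimensional space. The projections $\pi_i : G \to E_i[n]$ play the decisive role. I would split into two cases. In the first case, suppose both projections $\pi_1, \pi_2$ are injective. Then each is an isomorphism onto $E_i[n]$ (by dimension count, since $\dim G = 2$), so $G$ is the graph of an isomorphism $\psi = \pi_2\circ\pi_1^{-1} : E_1[n] \isar E_2[n]$. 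The isotropy condition $e_n(\psi(P),\psi(Q)) \cdot e_n(P,Q) = 1$, which says precisely that $\psi$ is an anti-isometry, falls out directly from the fact that the pairing vanishes on $G\times G$ and splits orthogonally; this is exactly the $n$-glueing case. In the second case, suppose one projection, say $\pi_1$, fails to be injective, so $G\cap E_2[n] = \ker\pi_1 \neq 0$. Here I would argue that $G$ decomposes as a product $C_1\times C_2$ with $C_i = G\cap E_i[n]$, yielding the second form in the statement, with $f$ factoring as $E_1\times E_2 \to (E_1/C_1)\times(E_2/C_2)$.

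The main obstacle is establishing that the second case genuinely forces the product structure $G = C_1\times C_2$ and that each $C_i$ has order exactly $n$. The subtlety is that a priori $G$ could be a ``mixed'' $2$-dimensional isotropic subspace meeting $E_1[n]$ and $E_2[n]$ nontrivially but not equal to their direct sum. I expect the resolution to use maximal isotropy together with the orthogonal splitting: if $G\cap E_2[n] \neq 0$, then because $G$ is isotropic and $E_2[n]$ is nondegenerate of dimension $2$, the intersection $G\cap E_2[n]$ is at most $1$-dimensional, and its orthogonal complement argument (combined with maximality of $G$) pins down $G\cap E_1[n]$ to be $1$-dimensional as well, forcing $\dim(G\cap E_1[n]) = \dim(G\cap E_2[n]) = 1$ and hence $G = (G\cap E_1[n]) \oplus (G\cap E_2[n])$ by counting dimensions. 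Each $C_i = G\cap E_i[n]$ is then a subgroup of order $n$, and quotienting by $G$ visibly factors through $(E_1/C_1)\times(E_2/C_2)$. I would handle the symmetric situation where $\pi_2$ is non-injective identically. The only real care needed is checking the dimension bookkeeping over $\Fl$ and verifying that no $2$-dimensional isotropic $G$ can meet exactly one of the two factors nontrivially while meeting the other trivially, which again follows from nondegeneracy of each $E_i[n]$.
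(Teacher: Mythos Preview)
Your argument is correct, but the paper takes a genuinely different route. Rather than analysing the projections $\pi_i$ case by case, the paper gives a pure counting argument: by Lemma~\ref{isotropicWeil} there are $(n^4-1)/(n-1)=n^3+n^2+n+1$ maximally isotropic subgroups of $(E_1\times E_2)[n]$; among these, exactly $((n^2-1)/(n-1))^2=(n+1)^2$ are products $C_1\times C_2$ of order-$n$ cyclic subgroups, and exactly $\#\{\psi:E_1[n]\isar E_2[n]\text{ anti-isometry}\}=\abs{\mathrm{SL}_2(\mathbf{F}_n)}=n^3-n$ are graphs of anti-isometries; since $(n+1)^2+(n^3-n)=n^3+n^2+n+1$, every maximally isotropic subgroup is of one of the two types.

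Your structural approach has the advantage of being self-contained (it does not rely on knowing the total count from Lemma~\ref{isotropicWeil}) and of explaining \emph{why} the dichotomy holds rather than verifying it numerically. The paper's proof is shorter and avoids the orthogonal-complement computation you flagged as the main obstacle. One minor remark: in your second case you invoke ``maximality of $G$'' to force $G\cap E_1[n]$ to be $1$-dimensional, but in fact only isotropy and $\dim G=2$ are needed---if $(0,v_2)\in G$ and $(w_1,w_2)\in G$ is linearly independent, isotropy gives $e_n(v_2,w_2)=1$, so $w_2\in\langle v_2\rangle$ and hence $(w_1,0)\in G$ with $w_1\neq 0$.
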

\begin{proof}
  It suffices to prove that every isotropic subgroup 
  $\Gamma\subset E_1\times E_2$
  with $\Gamma\cong (\Z/n\Z)^2$ is either
  of the form $C_1\times C_2$ or is the graph of an anti-isometry
  $E_1\rightarrow E_2$.
  
  There are $(n^4-1)/(n-1)$ maximally isotropic subgroups by 
  Lemma~\ref{isotropicWeil},
  of which $$((n^2-1)/(n-1))^2=n^2+2n+1$$ are products
  $C_1\times C_2$ 
  and $(n^2-1)n=n^3-n$ are graphs of anti-isometries.
  As we have $(n^4-1)/(n-1)=(n^2+2n+1)+(n^3-n)$,
  this covers all cases.
\end{proof}

Some glueings are the Jacobian of a genus-$2$ curve.
In fact, we have the following result,
which we will not need in any of our proofs, but will
use as motivation later.
\begin{theorem}[{Kani~\cite[Thm.~3]{Kani}}]\label{thm:whenglueing}
Let $(E_1,E_2,n,\psi)$ be as above Lemma~\ref{lem:glueingseverywhere}
and assume~$n$ is prime.
Then the glueing 
associated to this pair is the Jacobian of a
genus-$2$ curve if and only if
there
do not
exist an integer $k$ strictly between $0$ and~$n$ and an isogeny
$h: E_1 \rightarrow E_2$ of degree $k(n-k)$ with the property
$$
h_{\mid E_1[n]} = k \circ f.
$$
\end{theorem}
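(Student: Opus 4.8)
The plan is to deduce Kani's criterion from the classical dichotomy for principally polarized abelian surfaces over $\C$, together with a direct analysis of the elliptic subvarieties of the glueing. (Throughout I read the integer called $l$ in the statement as $n$, and the anti-isometry called $f$ as $\psi$.) First I would invoke the theorem of Weil (see also Oort--Ueno) that an indecomposable principally polarized abelian surface over $\C$ is the Jacobian of a smooth genus-$2$ curve, whereas a decomposable one is isomorphic, as a principally polarized abelian variety, to a product $(E'\times E'',\text{ product polarization})$ of two elliptic curves. Thus the glueing $A=(E_1\times E_2)/\graph\psi$ fails to be a Jacobian exactly when $(A,\varphi_A)$ splits as such a polarized product, and this happens if and only if $A$ contains an elliptic curve $C$ on which $\varphi_A$ restricts to a principal polarization (the complementary abelian subvariety then automatically carries a principal polarization, yielding the splitting).

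The core step is to translate the existence of such a $C$ into the arithmetic condition on $h$. Writing $\pi\colon E_1\times E_2\to A$ for the $n$-glueing, we have $\pi^{*}\varphi_A=n\mu_0$, where $\mu_0$ is the product principal polarization. Any elliptic curve $C\subset A$ lifts, through the connected component of its preimage, to a one-dimensional abelian subvariety of $E_1\times E_2$; these are exactly the graphs of pairs of homomorphisms, so a splitting curve $C$ that is isogenous to $E_1$ has the form $C=\pi(\graph\gamma_k)$ with $\gamma_k\colon E_1\to E_1\times E_2$, $P\mapsto(kP,h(P))$, for some $h\in\Hom(E_1,E_2)$ and $k\in\Z$. (If $E_1$ and $E_2$ are non-isogenous no graph curve exists, the only elliptic subvarieties of $A$ are the images of $E_1\times 0$ and $0\times E_2$, on which $\pi$ is injective so that $\varphi_A$ restricts to $n$ times a principal polarization, and $A$ is forced to be a Jacobian, matching the vacuous right-hand side.) Pulling $\mu_0$ back along $\gamma_k$ gives $(k^{2}+\deg h)$ times the principal polarization of $E_1$, and combining this with $\pi^{*}\varphi_A=n\mu_0$ expresses $\varphi_A|_C$ as a multiple of the principal polarization of $C$. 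To identify that multiple I would compute $\deg(\pi\circ\gamma_k)$ directly, through the kernel $\{P\in E_1:\gamma_k(P)\in\graph\psi\}$, which is governed by the auxiliary homomorphism $P\mapsto h(P)-\psi(kP)$, and match it against $\deg(\pi\circ\gamma_k)=n^{2}\deg\bar\gamma$, where $\bar\gamma\colon E_1\to C$ is defined by $\pi\circ\gamma_k=\bar\gamma\circ[n]$. The outcome is that $\varphi_A|_C$ is principal precisely when $0<k<n$, $\deg h=k(n-k)$, and $h|_{E_1[n]}=k\circ\psi$, with the symmetry $k\leftrightarrow n-k$ corresponding to interchanging $C$ with its complementary elliptic curve.

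Conversely, given $0<k<n$ and $h$ with $\deg h=k(n-k)$ and $h|_{E_1[n]}=k\circ\psi$, I would run the construction backwards. For $T\in E_1[n]$ the hypothesis gives $\gamma_k(T)=(kT,h(T))=(kT,k\psi(T))=k\cdot(T,\psi(T))\in\graph\psi=\ker\pi$, so $\pi\circ\gamma_k$ annihilates $E_1[n]$ and factors as $\bar\gamma\circ[n]$; its image $C$ is then an elliptic curve, and the degree identity of the previous paragraph makes $\varphi_A|_C$ principal. This produces a polarized splitting of $(A,\varphi_A)$, so $A$ is not a Jacobian, completing the equivalence.

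The main obstacle I anticipate is exactly the bookkeeping in the core step: carefully computing $\deg(\pi\circ\gamma_k)$ and hence $\deg\bar\gamma$ from the intersection $\graph\gamma_k\cap\graph\psi$, and verifying that ``principal restricted polarization'' corresponds to $\deg\bar\gamma=k$ and to $\deg h=k(n-k)$ rather than to some other normalization. Controlling the subgroup $\{P:\gamma_k(P)\in\graph\psi\}$ via the homomorphism $P\mapsto h(P)-\psi(kP)$, and tracking how the maximal-isotropy of $\graph\psi$ for the Weil pairing interacts with the integer $k$, is where the delicate part of the argument lies; the bounds $0<k<n$ will come from positivity (definiteness) of the induced polarizations on $C$ and its complement.
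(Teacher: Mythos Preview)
The paper does not supply its own proof of this theorem: it is stated with attribution to Kani and used as a black box, so there is no argument in the paper to compare your proposal against.

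That said, your outline is essentially Kani's original strategy---reduce via the Weil/Oort--Ueno dichotomy to the question of whether $(A,\varphi_A)$ contains an elliptic curve on which the polarization restricts principally, then parametrize candidate elliptic subvarieties by graphs $\gamma_k:P\mapsto(kP,h(P))$ and compute the induced degree. One point worth tightening: your parametrization implicitly assumes every relevant elliptic subvariety $C_0\subset E_1\times E_2$ arises as the image of some $\gamma_k$ with $k\in\Z$. This is true, but deserves a sentence: if $C_0$ has nonzero projection to $E_1$ of degree $d$, then precomposing the inclusion $C_0\hookrightarrow E_1\times E_2$ with the dual isogeny $E_1\to C_0$ gives exactly $\gamma_d$ for a suitable $h$ of degree $d\cdot\deg(C_0\to E_2)$; and you have already noted that the case of zero projection to $E_1$ forces $C_0=0\times E_2$, on which $\varphi_A$ restricts to $n$ times a principal polarization, hence is excluded. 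With that remark in place, the ``main obstacle'' you identify (the computation of $\deg(\pi\circ\gamma_k)$ via $\graph\gamma_k\cap\graph\psi$ and the resulting identification $\deg h=k(n-k)$) is indeed the heart of the matter, and your plan for it is sound.
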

In Section~\ref{sec:22endomorphisms} we will show how to write down explicit
equations for small~$n$. 

\subsubsection{If $\Q[x]$ \emph{is} a field,} the positive
definite case of Lemma~\ref{lem:boundsforthatbasis}
gives all pairs $(M,T)$.
From a pair $(M,T)$ we get a complex analytic abelian surface
to which we can (without proof) apply the same numerical techniques
as in Section~\ref{sec:quarticmodel}.
Alternatively, the pair $(M,T)$ relates the corresponding
abelian surface to an isogenous product of CM curves,
and we may hope to apply glueing techniques.

\section{Example}
\label{sec:22endomorphisms}

In this section we will solely focus on the case~$l=2$, and we will identify
all irreducible subvarieties of~$\modulitwo$ by giving explicit models.
We use the computer algebra package MAGMA~\cite{magma} to help us with the 
computations. For the points inside~$\modulitwo$ we either give a
hyperelliptic model $y^2=f(x)$, or
Igusa-Clebsch invariants $I_2$, $I_4$, $I_6$, $I_{10}$
(introduced as $A$, $B$, $C$, $D$ by Igusa~\cite{igusa},
who based them on invariants of Clebsch).
Mestre's algorithm~\cite{mestre,bouyer-streng} computes
a hyperelliptic model from its Igusa-Clebsch invariants.
%

We start by introducing short notation for the elliptic curves
and curves of genus~$2$ that appear in our lists.
If $D$ is a negative quadratic discriminant and $h(D)$ is the class number of 
the order $\mathcal{O}_D$ of discriminant~$D$, 
let $E_D^1,\ldots,E_D^{\smash{h(D)}}$ be the isomorphism classes of elliptic 
curves over $\C$ with endomorphism ring~$\mathcal{O}_D$.
For $h(D)=1$, we denote $E_D^1$ also by $E_D$.
Table~\ref{tableofcurves} on page~\pageref{tableofcurves}
defines a list of curves of genus~$2$ in terms
of their Igusa-Clebsch invariants.
All curves in the table are isogenous to a product of elliptic curves.
In fact, these curves have a subscript $D$ or $D_1,D_2$
in their notations, and these subscripts indicate
that their Jacobian 
is isogenous
to $E_D^1\times E_D^1$ or $E_{D_1}^1\times E_{D_2}^1$.
\begin{table}
$$
\begin{array}{|l|l|}\hline
   C & (I_2:I_4:I_6:I_{10})(C)\\
\hline
  C_{-3} & (40:45:555:6)\\
C_{-6}^1 & ( 92: 108: 4104: 24)\\
C_{-6}^2 & (76: 252: 5160: 24)\\
C_{-7} & (10840: 2004345: 7846230105:131736761856)\\
C_{-8} & ( 20: -20: -40: 8)\\
  C_{-15} & (20: 225: 1185: -384)\\
  C_{-20}^i &  
(46i + 42: 210i - 220: 50i - 4350: -73i + 161)
\quad   (i^2=-1)\\
  C_{-4,-7}^a &  ( 8+20a:  - 1035-450a:   87246+33606a: 
      25164+9504a)\\ & \quad (a^2=7)\\
  C_{-4,-8} & (24: 30: 366: 2)\\
\hline\end{array}
$$
\caption{A list of genus-$2$ curves and the
Igusa-Clebsch invariants that define them.
}
\label{tableofcurves}
\end{table}

\subsection{Explicit glueings}\label{explicitglue}
To compute models for all irreducible subvarieties of~$\modulitwo$, we need
to explictly compute $(2,2)$-isogenies. Legendre first discovered a way
of doing this. For the convenience of the reader, we recall 
Serre's explanation~\cite[Sec.\ 27]{Ser85} of Legendre's method. 

Let $(E_1,E_2,\psi)$ be a triple consisting of two elliptic curves, and
an anti-isometry $\psi: E_1[2] \rightarrow E_2[n]$.
We choose a Weierstra{\ss} model $y^2=g_i$ for~$E_i$.
The double cover $E_1\rightarrow\mathbf{P}^1$
given by the $x$-coordinate has four ramification points,
which are exactly
the points in $E_1[2]=\{P_1,P_2,P_3,P_4\}$.
Let $Q_j=\psi(P_j)$ and apply a fractional linear transformation
of the $x$-coordinate of $E_2$ to get $x(Q_j)=x(P_j)$ for $j=1,2,3$.
\begin{remark}\label{rem:whenglueing}
By Theorem~\ref{thm:whenglueing}, the glueing
corresponding to $(E_1,E_2,\psi)$ as above
is
the Jacobian of a curve of genus~$2$ if and only
if $\psi$ is not of the form $h_{|E_1[2]}$ for an isomorphism
$h:E_1\rightarrow E_2$, that is, if and
only if $x(Q_4)$ and $x(P_4)$ are distinct.
\end{remark}
By Remark~\ref{rem:whenglueing}, it makes sense to
restrict to the case where
$x(Q_4)\not=x(P_4)$.
Let $K_{1}$ and $K_{2}$ be the function fields of $E_{1}$ and $E_{2}$
as quadratic extensions of $\Q(x)$ and let
$K_{12}$ be their composite. Let $K_{0}\subset K_{12}$
be the other intermediate extension
of degree~$2$. The field $K_{0}$ is ramified over $\Q(x)$ only at $x(P_4)$ and
$x(Q_4)$, so $K_0$ has genus~$0$.
The extension $K_{12}/K_0$ is ramified in the $6$ places of $K_0$
lying over the three points $x(P_j)$ for $j=1,2,3$.
In particular, it follows from the Riemann-Hurwitz
genus formula that the curve $C$ with function field $K_{12}$
has genus~$2$. 
The Jacobian of $C$ is
$(2,2)$-isogenous to $E_{1}\times
E_{2}$ via an isogeny with kernel $\graph \psi$.

In terms of equations, Legendre's
glueing construction can be described as follows.
Choose a Legendre
model of $E_1$ so that we have $x(P_1)=\infty$, $x(P_2)=0$, $x(P_3)=1$
and let $a=x(P_4)$.
As above we have $x(Q_j)=x(P_j)$ for $j=1,2,3$.
Let  $b=x(Q_4)\not=a$, so
$E_1$ and $E_2$ are given by
\[
E_{1}:y^{2}=x(x-1)(x-a),\quad
E_{2}:y^{2}=x(x-1)(x-b).
\]
The curve $C_0$ with function field $K_0$ is given by $C_0:y^2=(x-a)(x-b)$
and has a parametrization \[t\mapsto \left(x=a+\frac{(a-b)t^2}{1-t^2},\quad y=\frac{(a-b)t}{1-t^2}\right).\]
The extension $K_{12}/K_0$ ramifies at the points
with $x\in\{\infty,0,1\}$,
i.e., with $t^2\in\{1,a/b,(a-1)/(b-1)\}$.
In particular, the 
genus-$2$ curve $C=C(a,b)$ obtained by glueing $E_{1}$ and
$E_{2}$ as above is:
\[
C(a,b):s^{2}=(t^2-1)(t^2-\frac{a}{b})(t^2-\frac{a-1}{b-1}).
\]

\subsection{Real quadratic field}
The only real quadratic field occuring for $l=2$ is $\Q(\sqrt{2})$ and the
only order is $\Z[x]=\Z[\sqrt{2}]$.
By Lemma~\ref{lem:humbertsurface},
there is only a single Humbert surface $H_{8}$,
for which defining equations have been
computed by
Gruenewald~\cite{gruenewald,gruenewalddata}.
This deals with all the surfaces inside~$\modulitwo$.

\subsection{Imaginary quadratic fields, definite~$S$}
Now suppose $\Z[x]$ is imaginary quadratic.
We start with the case where the Hermitian form $S$
of Section~\ref{ssec:imagquad} is definite.
\begin{lemma}\label{lem:case3}
  Let $A$ be a principally polarized abelian surface over $\C$
  and $x:A\rightarrow A$ a $(2,2)$-isogeny.
  If $\Z[x]$ is imaginary quadratic and $S$ of Section~\ref{ssec:imagquad}
  is
  definite,
  then $A$ is isomorphic to one of
  $E_{-4}\times E_{-4}, E_{-7}\times E_{-7}, E_{-8}\times E_{-8}$, and $J(C_{-8})$.
\end{lemma}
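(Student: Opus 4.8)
The plan is to enumerate the relevant pairs $(\M,T)$ with $S=\delta T$ \emph{definite} for each imaginary quadratic order $\Z[x]$ with $x\overline{x}=2$, and then realize the corresponding abelian surface concretely. By Example~\ref{quadraticfields}, the candidates for $x$ (up to sign and conjugation) are $\sqrt{-2}$, $\tfrac12(1+\sqrt{-7})$, and $1+\sqrt{-1}$, giving $K=\Q(\sqrt{-2}),\Q(\sqrt{-7}),\Q(\sqrt{-1})$ respectively; note $\Z[x]$ is the maximal order in each case, all of class number~$1$. So first I would, for each such $K$, carry out the classification of Section~\ref{ssec:imagquad} to list the isomorphism classes of definite pairs. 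For $K=\Q(\sqrt{-2})$ this is already done in Example~\ref{ex:sqrt-2}: the two positive-definite classes are $T_1$ and $T_2$. I expect the analogous computation for $\Q(\sqrt{-7})$ and $\Q(\sqrt{-1})$ to yield a short explicit list as well, using Lemma~\ref{lem:boundsforthatbasis} to bound $[\M:\O\times\O]$ and hence $\abs{a},\abs{b}$.

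Second, I would invoke Theorem~\ref{thm:modulispaces}: in the imaginary quadratic case with $\det T>0$ (equivalently $S$ definite), $S(\M,T)$ is a single point, and the surface $A$ is isogenous as an unpolarized surface to $E\times E$ with $\End_\Q(E)=K$. Since each $K$ here has class number~$1$, we have $E\cong E_D$ where $D$ is the discriminant of $\O_K$, namely $D=-8,-7,-4$. This already pins down the isogeny class of each $A$. To get the actual isomorphism class I would use the construction recalled in Section~\ref{sssec:endomringsquad}: a decomposition of $\M\otimes\Q\cong K\times K$ splits $V=\M\otimes\R$ into lines $V_1,V_2$, producing elliptic curves $E_i=V_i/(V_i\cap\M)$ and an isogeny $E_1\times E_2\to A$, so $A$ is obtained by \emph{glueing} (Section~\ref{sec:glueings}) two copies of $E_D$ along their $2$-torsion.

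Third, I would compute the glueings explicitly. For each definite pair the relevant data is a $2$-isogeny or isomorphism $E_D\to E_D$ together with an anti-isometry $\psi:E_D[2]\to E_D[2]$; applying the Legendre construction of Section~\ref{ssec:legendreglueing} gives either a product (when $\psi=h_{\mid E_D[2]}$ for an isomorphism~$h$, by the Remark after Theorem~\ref{thm:whenglueing}) or a Jacobian $J(C)$ of the genus-$2$ curve $C(a,b)$. The expected outcome is that the ``product'' glueings recover $E_{-4}\times E_{-4}$, $E_{-7}\times E_{-7}$, $E_{-8}\times E_{-8}$, while the single genuinely nonsplit class over $\Q(\sqrt{-2})$ (coming from $T_2$, where the gluing map is a nontrivial automorphism rather than the identity) produces the Jacobian $J(C_{-8})$ listed in Table~\ref{tableofcurves}.

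The main obstacle I anticipate is the last step: correctly matching each definite isomorphism class $(\M,T)$ to its concrete surface and verifying which classes glue to products versus to a Jacobian. This requires translating the abstract $\Z[x]$-module data into an explicit anti-isometry on $E_D[2]$ and then running Legendre's recipe (tracking $x(P_4)$ versus $x(Q_4)$) to decide whether Theorem~\ref{thm:whenglueing} gives a genus-$2$ curve; the Igusa invariants of the resulting $C$ must then be checked against the entry $C_{-8}$ in the table. The module-theoretic enumeration itself (Lemma~\ref{lem:boundsforthatbasis}) and the identification $E\cong E_D$ via class number~$1$ are routine, so the real content is the explicit glueing computation confirming that exactly these four surfaces occur and no others.
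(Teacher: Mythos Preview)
Your proposal follows essentially the same route as the paper: enumerate the three imaginary quadratic orders containing an element of norm~$2$, classify the positive-definite pairs $(\M,T)$ for each (quoting Example~\ref{ex:sqrt-2} for $D=-8$ and carrying out the analogous short computation for $D=-4,-7$, where only the class of $E_D\times E_D$ appears), and then realize the one non-product class over $\Q(\sqrt{-2})$ by an explicit Legendre $2$-glueing of $E_{-8}$ with itself to obtain $J(C_{-8})$. One small slip to fix: in Theorem~\ref{thm:modulispaces} the single-point (definite) case is $\det T<0$, not $\det T>0$, since $S=\delta T$ with $\delta^2<0$; your use of that case is otherwise correct.
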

\begin{proof}
  Example \ref{quadraticfields} shows that $\Z[x]$ is the maximal
  order of the imaginary quadratic field of discriminant $D\in \{-4,-7,-8\}$.
  By replacing $(M,T)$ by its complex conjugate, we may assume 
  that $S = \delta(\Z[x])T$ is \emph{positive} definite.

In all three cases, the order~$\Z[x]$ has class number one, and the only
rank two projective $\Z[x]$-submodule of $K\times K$ equals
$$
\M = \Z[x] \times \Z[x]
$$
up to $\GL_2(\Q(x))$-equivalence. Furthermore, in all three cases one
equivalence class of $S$ is given by
$$
S = \left(\begin{array}{cc} 1 & 0 \\ 0 & 1\end{array}\right),
$$
and we obtain the three products $E_D \times E_D$ with the product 
polarizations. 

In this situation, it is easier to use the techniques 
of~\cite[\S 5]{hayashidanishi} instead of the method
from Section~\ref{ssec:imagquad}
to find all isomorphism classes of
pairs $(\M,S)$.
In any case, we find that these products $E_D\times E_D$
are all classes for $D\in\{-4,-7\}$. For $D=-8$ we
find that there is exactly one further class, given by
$$
S = \left(\begin{array}{cc} 2 & \sqrt{-2}+1 \\ -\sqrt{-2}+1 &  2\end{array} \right),
$$
which corresponds to giving the product $E_{-8}\times E_{-8}$ a different
polarization.
We will see that it also corresponds to a $(2,2)$-glueing.

To use Legendre's
method, we first choose a map
$$
\psi : E_{-8}[2] \rightarrow E_{-8}[2]
$$
to be used for the glueing. Recall that $x = \sqrt{-2}$
is a $(2,2)$-endomorphism
on $E_{-8}\times E_{-8}$. If we let $g : E_{-8} \times E_{-8} \rightarrow
A = E_{-8}\times E_{-8}/(\graph \psi)$ denote the quotient map, then we require
that $gxg^{-1} \in \End(A) \otimes\Q$ is in fact an endomorphism of~$A$.
Indeed, if this is the case, then $gxg^{-1}$ is a~$(2,2)$-isogeny of~$A$.

We pick the basis 
 $\frac{1}{2},\frac{1}{2}\sqrt{-2}\pmod{\Z[\sqrt{-2}]}$ of $E_{-8}[2]$,
where $E_{-8} =\C/\Z[\sqrt{-2}]$;
and we define 
$$
\psi=\left(\begin{array}{cc} 1 & 0 \\ 1 & 1\end{array}\right).
$$
The equality $x=\sqrt{-2}=({0 \atop 1} {0\atop 0})$
shows that $\psi$ and $x$ commute on $E_{-8}[2]$. In particular,
 multiplication by $x$ maps $\graph\psi$ into itself, and $gxg^{-1}$ is
an endomorphism of~$A$. 
Using the formula of Section~\ref{explicitglue},
  we find $A=J(C_{-8})$ with $C_{-8}$ as in
  Table~\ref{tableofcurves}.
In particular, $A\not\cong E_{-8}\times E_{-8}$
and we have found the final abelian surface.
\end{proof}

\subsection{Imaginary quadratic fields, indefinite~$S$}

\begin{lemma}\label{lem:case2}
  Let $A$ be a principally polarized abelian surface over~$\C$.
  There exists a $(2,2)$-isogeny $x:A\rightarrow A$ 
  with $\Z[x]$ imaginary quadratic and $S$ \emph{in}definite,
  if and only if $A$ is
  a point on one of the following Shimura curves:
\begin{enumerate}
 \item the curve in $\A_2$ of which the points are
       the products $E\times E$ of an elliptic curve $E$ with itself;
 \item the curve in $\A_2$ of which the points are
       the products $E\times F$ for all $2$-isogenies $E\rightarrow F$;
\item  the Zariski closure
       of the image of the map
       $\C\setminus\{0,1\}\rightarrow \A_2$
       given by $u\mapsto C(u,1-u)$,
       where $C(a,b)$ is given as in Section~\ref{explicitglue}
       by
    $$C(a,b): y^2=(x^2-1)(x^2-\frac{b}{a})(x^2-\frac{b-1}{a-1});$$
\item  the Zariski closure
       of the image of the map
       $\C\setminus\{0,\pm 1, \pm i, \pm \sqrt{2}-1\}\rightarrow \A_2$
       given by $$u\mapsto C(u^2,\frac{(u-1)^2}{(u+1)^2});$$
\item  the Zariski closure
       of the image of the rational
       map $X\dashrightarrow\A_2:(u,v)\mapsto C(u,v)$,
       where $X\subset \mathbf{A}^2$ is given by
       Figure \ref{fig:varietyX} on page~\pageref{fig:varietyX}.   
\end{enumerate}
Each of these $5$ Shimura curves has genus~$0$.
\end{lemma}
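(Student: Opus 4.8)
The plan is to run the classification of Section~\ref{ssec:imagquad} over the finitely many admissible orders $\Z[x]$, and then turn each surviving pair $(\M,T)$ into one of the five explicit Shimura curves via the $2$-glueing recipe of Section~\ref{ssec:legendreglueing}. The two implications are handled separately. The forward direction is precisely the enumeration: given $A$ together with such an $x$, Theorem~\ref{thm:modulispaces} places $A$ on the Shimura curve $S(\M,T)$ attached to its pair, and I must show every such $S(\M,T)$ is one of the five listed. The converse is softer: each of the five families is constructed as an image of a $2$-glueing of a family carrying the required imaginary-quadratic endomorphism with indefinite $S$, so by Theorem~\ref{thm:modulispaces} each such curve lies in $\modulitwo$ and every point on it admits the desired $(2,2)$-isogeny to itself.

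First I would pin down the orders. By Example~\ref{quadraticfields} an imaginary quadratic $x$ with $x\overline{x}=2$ generates one of the three maximal orders of discriminant $-4,-7,-8$, and since the norm $x\overline{x}=2$ is a rational prime, $x$ is primitive, so $\Z[x]$ is the full ring of integers in each case. All three orders have class number $1$, so after the normalization of Lemmas~\ref{mu0} and~\ref{lem:boundsforthatbasis} the module $\M$ is determined up to $\GL_2(K)$-equivalence by the index $[\M:\O\oplus\O]$, which those lemmas bound in terms of $d(\O)$; this leaves only finitely many indefinite pairs $(\M,S)$, and I would record one normal-form representative $T=\delta^{-1}\mathrm{diag}(a,b)$ or $T=\delta^{-1}\!\bigl(\begin{smallmatrix}0&a\\ a'&b\end{smallmatrix}\bigr)$ for each, fixing the sign of $S$ by complex conjugation. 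For $D=-8$ these are exactly $T_3$ and $T_4$ of Example~\ref{ex:sqrt-2}.

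The key simplification, anticipated at the end of Section~\ref{sssec:endomringsquad}, is that for a $(2,2)$-isogeny the indefinite case always has $\mu$ representing $0$, so the quaternion algebra $B=\M\otimes\Q$ is $\Mat_2(\Q)$; I would verify this on each enumerated pair. Its symmetric idempotents split $A$ up to isogeny as a product $E\times F$ and supply an isogeny $E\to F$ of a fixed degree $d$ constant along the Shimura curve, so the candidate pairs $(E,F)$ are parametrized by $\Phi_d$. For each pair I would read off $d$ together with the $2$-torsion glueing data determined by $\M$, and feed this into Legendre's construction of Section~\ref{ssec:legendreglueing} to produce an explicit one-parameter family of curves $C(a,b)$. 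Matching these to the list --- the products $E\times E$ (the trivial-glueing case), the products $E\times F$ with a $2$-isogeny, and the three genuine Jacobian families $C(u,1-u)$, $C(u^2,(u-1)^2/(u+1)^2)$, and $C(u,v)$ over $X$ --- is the combinatorial heart of the argument. Genus $0$ then follows because each curve is dominated by a rational source: (1) by the $j$-line, (2) by $Y_0(2)$, (3) and~(4) by $\mathbf{A}^1$, and (5) by $X$, which one checks is itself rational.

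The hardest part will be the explicit glueing computations and, in particular, the identification of the cut-out variety $X\subset\mathbf{A}^2$ in case~(5): one must translate the abstract pair $(\M,S)$ and its isogeny degree $d$ into the precise constraint relating the Legendre parameters $a$ and $b$, then confirm that the resulting family is exactly the claimed one. The other delicate point is exhaustiveness of the enumeration in the second step --- that no indefinite pair is missed and that distinct pairs do not collapse to the same moduli curve --- which I would settle by separating the five curves through their geometric invariants, namely the isogeny degree $d$ and whether the generic member is a product or an indecomposable Jacobian.
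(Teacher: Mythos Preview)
Your overall plan is close to the paper's: enumerate the indefinite pairs $(\M,S)$ for $D\in\{-4,-7,-8\}$ and identify each Shimura curve through glueing. The paper's execution is more concrete, however: rather than reading off the isogeny degree $d$ from the abstract quaternion structure, it writes down an explicit matrix endomorphism
\[
x=\left(\begin{array}{cc} a & b f' \\ -b f & a\end{array}\right)
\]
on a product $E\times F$ with $a^2+b^2n=l$, and then glues along a carefully chosen $\psi$ and checks directly that $gxg^{-1}$ (or, for $D=-7$, the element $\tfrac12 gxg^{-1}$) lands in $\End(A)$. For $D=-7$ the paper does \emph{not} work with $l=2$ at all on the product side: it takes $l=8$, $n=7$, $a=b=1$, glues along $\psi_2=f_{|E[2]}$ for a $7$-isogeny $f$, and then divides the induced $(8,8)$-endomorphism by~$2$. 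Isolating the correct curve $X$ among the six irreducible factors of $\Phi_7(j(a),j(b))$ also needs an extra ingredient you do not mention: the paper exhibits a $\sqrt{2}$ in $\End(A)$ and picks out the unique factor lying on~$H_8$.

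There is one concrete error. You argue that curve~(5) has genus~$0$ because it is ``dominated by $X$, which one checks is itself rational.'' In fact $X$ is birational to $Y_0(7)\times_{Y(1)}Y(2)$ and has genus~$2$, as stated in Figure~\ref{fig:varietyX}. The Shimura curve in question is the \emph{image} of $X$ in $\A_2$, and its genus cannot be read off from a non-rational cover; the paper computes this genus directly with MAGMA. So your genus argument for case~(5) fails as written and must be replaced by a computation on the image curve itself.
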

\begin{figure}
\fbox{\parbox{\textwidth}{
  The curve $X\subset \mathbf{A}^2$ given by
  \begin{eqnarray*} X &:& a,b\not\in\{0,1\},\\
& & 0=a^8 - 16777216a^7b^7 + 58720256a^7b^6 - 78905344a^7b^5\\
 & &+ 50462720a^7b^4 - 15307264a^7b^3 + 1858304a^7b^2 - 51464a^7b\\
& & + 58720256a^6b^7 - 499580928a^6b^6 + 1158348800a^6b^5\\
& &- 916944896a^6b^4 +    63926016a^6b^3 + 133672476a^6b^2\\
& & + 1858304a^6b
 - 78905344a^5b^7
 +  1158348800a^5b^6\\ & & - 3908889600a^5b^5
+ 4686427648a^5b^4 - 1905600312a^5b^3\\ & & + 63926016a^5b^2
- 15307264a^5b + 50462720a^4b^7 - 916944896a^4b^6\\
& & + 4686427648a^4b^5 - 7639890874a^4b^4 + 
    4686427648a^4b^3\\
& & - 916944896a^4b^2 + 50462720a^4b - 15307264a^3b^7 +
    63926016a^3b^6\\
& & - 1905600312a^3b^5 + 4686427648a^3b^4 - 
    3908889600a^3b^3\\
& & + 1158348800a^3b^2 - 78905344a^3b + 1858304a^2b^7\\
& & +
    133672476a^2b^6 + 63926016a^2b^5 - 916944896a^2b^4\\
& & + 
    1158348800a^2b^3 - 499580928a^2b^2 + 58720256a^2b - 51464ab^7\\
& & + 
    1858304ab^6 - 15307264ab^5 + 50462720ab^4\\
& & - 78905344ab^3 + 
    58720256ab^2 - 16777216ab + b^8,
\end{eqnarray*}
has genus~$2$. It is birational to the fibered product
$Y_0(7)\times_{Y(1)}Y(2)$ parametrizing
tuples $(E,F,\psi,P_1,P_2)$ up to isomorphism,
where $\psi:E\rightarrow F$ is a $7$-isogeny
and $P_1,P_2$ form a basis of $E[2]$.
The birational map $X\rightarrow Y_0(7)\times_{Y(1)}Y(2)$
is given by $(a,b)\mapsto (\psi,P_1,P_2)$,
where $E:y^2=x(x-1)(x-a)$, $P_1=(0,0)$,
$P_2=(1,0)$, 
$F:y^2=x(x-1)(x-b)$ and $f:E\rightarrow F$ is a $7$-isogeny.
}}
\caption{The curve $X$ of Lemma \ref{lem:case2}.}\label{fig:varietyX}
\end{figure}
\begin{proof}
  As in the proof of Lemma~\ref{lem:case3},
  the ring $\Z[x]$ is the maximal
  order 
  of the imaginary quadratic field of discriminant $D=-4$, $-7$, or~$-8$.
  It is not hard to see that the following pairs $(\M, T)$
  cover all isomorphism classes for $|D|\leq 8$: take
  $\mathcal{M}=\Z[x]\times \Z[x]$
  and
$$ 
T_1 = \frac{1}{\delta} \left( \begin{array}{cc} 1 & 0 \\ 0 & -1\end{array}\right)\quad
\mbox{or}\quad T_2 = \frac{1}{\delta}\left(\begin{array}{cc} 0 & 1 \\ 1 & 0\end{array}\right).
$$
  Note that $T_1$ and $T_2$ are equivalent for odd~$D$, so that we actually
  have at most 5 distinct Shimura curves, which we will now compute.
  Of course one also arrives at five cases if one
  uses Section~\ref{ssec:imagquad}.

  Let $f:E\rightarrow F$ be a degree-$n$ isogeny
  of elliptic curves
  and let $f^\vee$ be its dual.
  If  $a,b\in\Z$ are such that $a^2+b^2n=l$,
  then we have an $(l,l)$-isogeny
  $$x=\left(\begin{array}{cc} a & b f^\vee \\ -b f & a\end{array}\right)\in\End(E\times F)$$
  such that $\Z[x]$ is quadratic of discriminant $D=-4b^2n$.
  Indeed, we have $x^2-2ax+l=0$ and $x'=2a-x$, hence $x'x=l$.
  
  The products $E\times F$ form a curve $\mathcal{S}$
  in the moduli space $\A_2$, and that curve $\mathcal{S}$
  is covered by the modular curve $Y_0(n)$
  of $n$-isogenies $E\rightarrow F$.
  Any such $\mathcal{S}$ is contained in one
  of the Shimura curves we are looking for.
  By the construction in~\cite[\S 9.6]{birkenhakelange},
  the Shimura curves are irreducible, so that
  $\mathcal{S}$ is in fact equal to one of the Shimura curves.
  
  We apply the above to $l=2$, $D=-4$, $n=a=b=1$, $F=E$, $f=\mathrm{id}_E$,
  which yields the Shimura curve of all squares $E\times E$
  of elliptic curves $E$.
  We also apply it to
  $l=2$, $D=-8$, $n=2$, $a=0$, $b=1$,
  which yields the Shimura curve of all products
  $E\times F$ for which $F$ is $2$-isogenous to $F$.
  We have now found $2$ of our $5$ Shimura curves.

  We will find more Shimura curves by looking at
  $(k,k)$-isogenies $g:E\times F\rightarrow A$ for small $k$
  and checking if the element
  $gxg^{-1}\in\End_\Q(A)$ (with $x$ as above)
  is in $\End(A)$.
  As in the proof of Lemma~\ref{lem:case3},
  the fact that $g$ is a $(k,k)$-isogeny
  and $x$ is an $(l,l)$-isogeny imply
  that the induced endomorphism
  $gxg^{-1}\in\End(A)$
  is an $(l,l)$-isogeny.

  The condition that $gxg^{-1}$ is in $\End(A)$
  is equivalent to the condition that $x$ maps $\ker(g)$ to itself.
  In particular, as $\ker(g)$ is the graph of an anti-isometry
  $\psi:E[k]\rightarrow F[k]$, we have $gxg^{-1}\in\End(A)$
  if and only if
  \begin{equation}\label{eq:conditionquotient}
  b(\psi f^\vee\psi + f)=0
  \end{equation} holds on $E[k]$.

  We start with the case $D=-4$, and choose
  again $l=2$, $n=a=b=1$, $F=E$, $f=\mathrm{id}_E$.
  Write $E:y^2=x(x-1)(x-u)$
  and choose the basis $(0,0)$, $(1,0)$ of $E[2]$.
  We choose~$\psi$ to
  be the automorphism of $E[2]$ that swaps these basis elements,
  and it is easy to see that~\eqref{eq:conditionquotient} holds
  in this case.
  The fractional linear transformation of the $x$-coordinate
  sending $\infty$ to itself and swapping $0$ and $1$
  maps $E$ to $y^2=x(x-1)(x-v)$ with $v=1-u$.
  Therefore, we find $E\times E/(\graph\psi)\cong J(C(u,1-u))$
  as in Section~\ref{explicitglue}.
  This yields our third Shimura curve.
  
  For the case $D=-8$ (with $n=l=2$, $a=0$, $b=1$),
  recall from~\cite[Ex.~X.4.8]{silverman} that  
  for every $2$-isogeny $f:E\rightarrow F$,
  the pair of curves $E$ and~$F$
  can be written in 
  the form $$E:y^2=x(x-1)(x-a^2),\quad F:y^2=x(x-1)(x-(\frac{1-a}{1+a})^2)$$
  with $\ker f=\langle(0,0)\rangle=f^\vee(F[2])$,
  $\ker f^\vee=\langle(0,0)\rangle=f(E[2])$ and $a\in\C\setminus \{0,\pm 1\}$.
  The map $\psi:E[2]\rightarrow F[2]$ given by $(0,0)\mapsto (0,0)$ and $(1,0)\mapsto (1,0)$
  satisfies $\psi f^\vee\psi=f$ and we find Shimura curve~(4).

  For the only remaining Shimura curve (with $D=-7$), we apply the above to
  $l=8$, $D=-28$, $n=7$, $a=b=1$.
  Let $f:E\rightarrow F$ be a $7$-isogeny and
  let $\psi_{j}=f_{|E[j]}:E[j]\rightarrow F[j]$.
  Let $A=E\times F/(\graph \psi_2)$ and let $g$ be the quotient map.
  We will show that $\frac{1}{2}gxg^{-1}$ is an endomorphism
  of~$A$, i.e.~that $x$ maps $[2]^{-1}\graph\psi_2$ to $\graph\psi_2$.
  This is slightly more complicated than~\eqref{eq:conditionquotient},
  but the ideas are mostly the same.

  Choose a basis of~$E[4]$. As $f$ has odd degree, the
  image of this basis under $f$ is a basis of~$F[4]$.
  We have $2^{-1}\graph\psi_2=\graph\psi_4+(E[2]\times 0)$.
  Given $P\in E[4]$, we find
  $x(P,\psi_4(P))=(P+f^\vee\psi_4P,-fP+\psi_4P)=0$.
  Let $P\in E[2]$ be any element. We find
  $x(P,0)=(P,-fP)=(P,fP)\in\graph\psi_2$.
  This shows that indeed $x$ maps $[2]^{-1}\graph\psi_2$ to $\graph\psi_2$,
  hence~$A$ has a (2,2)-isogeny $\frac{1}{2}g xg^{-1}$
  to itself with $\Z[\frac{1}{2} gxg^{-1}]$ quadratic
  of discriminant~$-7$.
  This gives the fifth Shimura curve
  $\mathcal{S}\subset\A_2$, and we will now write down an explicit model
  for that, which is more complicated
  as we need a modular curve~$X$ that parametrizes
  $7$-isogenies $f:E\rightarrow F$ of elliptic curves
  with a basis of the $2$-torsion of~$E$.
  
  As before, let $Y_0(l)$ be the modular curve of
  $l$-isogenies $f:E\rightarrow F$.
  Let $Y(l)$ be the modular curve of elliptic curves $E$
  together with a basis of the $l$-torsion.
  We have natural maps $\pi_E,\pi_F:X\rightarrow Y(2)$,
  given by $\pi_E(x)=(E,P_1,P_2)$
  and $\pi_F(x)=(F,f(P_1),f(P_2))$, and natural maps
  $\pi_E,\pi_F:Y_0(l)\rightarrow Y(1)=\mathbf{A}^1$,
  given by $\pi_E:f\mapsto j(E)$ and $\pi_F:f\mapsto j(F)$.
  See also Figure \ref{fig:varietiesinclX} on page~\pageref{fig:varietiesinclX}.
  For ease of notation, we identify~$X$ (respectively~$Y_0(7)$)
  birationally
  with its image in $Y(2)\times Y(2)$ (respectively $Y(1)\times Y(1)$).
  Then $Y_0(l)$ is exactly the zero-locus of the modular polynomial
  $\Phi_l\in\Z[X,Y]$.

\begin{figure}
$$\xymatrix{      
         & X\ar[dl]^{\pi_E}\ar[dr]_{\pi_F}\ar[dd]\\
       Y(2)\ar[dd]^{j}        &                  &      Y(2)  \ar[dd]_{j}      \\
                   &         Y_0(7) \ar[dl]^{\pi_E}\ar[dr]_{\pi_F}    &          \\
       Y(1)        &                  &      Y(1)         }$$
\caption{The curves $X$, $Y(2)$, $Y(1)$ and $Y_0(7)$
from the proof of Lemma~\ref{lem:case2}.}\label{fig:varietiesinclX}
\end{figure}  

  Next, we have $Y(2)=\C\setminus\{0,1\}$,
  where $a\in \C\setminus\{0,1\}$
  corresponds to the elliptic curve $y^2=x(x-1)(x-a)$
  with the basis
  $(0,0), (1,0)$ of the $2$-torsion.
  To glue $E$ to $F$ along $\graph f_{|E[2]}$ for all $x\in X$, we need a description
  of $X$ from which $a=\pi_E(f)$ and $b=\pi_F(f)$ can be read off.
  This means that we want a description of $X$ as a subvariety of
  $Y(2)\times Y(2)$.
  We have the natural map $j:Y(2)\rightarrow Y(1)$ given by
  $j(a)=2^8(a^2-a+1)^3(a-1)^{-2}a^{-2}$. Let $W=(j\times j)^{-1}(Y_0(7))$,
  so $X\subset W\subset Y(2)\times Y(2)$ and $W$
  is the zero locus of $P=\Phi_7(j(u),j(v))\in\Q(u,v)$.
  
  We factor the numerator of $P$ using MAGMA
  as a product of $6$ irreducible polynomials in $\Q[u,v]$.
  Note $W=\sqcup_\sigma X^\sigma$, where $\sigma\in\Aut(F[2])$
  acts by changing the basis of $F[2]$.
  Now we only need to identify $X$ as
  one of the~$6$ factors of the numerator of~$P$.

  To select the correct factor, we now prove that $\mathcal{S}$ is a
  subset of the Humbert surface~$H_8$.
  Let
  $$y=\left(\begin{array}{cc} 1 & f^\vee \\ f & -1\end{array}\right),\quad\mbox{so}\quad
              \frac{1}{2}(x-y)=\left(\begin{array}{cc} 0 & 0 \\ -f & 1\end{array}\right).$$
  We compute that $g\frac{1}{2}(x-y)g^{-1}$ (and hence $g\frac{1}{2}yg^{-1}$)
  is an endomorphism of $A$.
  We have $y'=y$ and $(\frac{1}{2}y)^2=2$,
  hence $\mathcal{S}\subset H_8$.
  Using the defining equation for $H_8$
  from~\cite{gruenewalddata},
  we find that exactly one out of our $6$ candidates for $S$ lies
  on~$H_8$, so we know that this is the correct one
  and we present it in Figure~\ref{fig:varietyX}.
  
  Now only the statement about the genus remains.
  The first two Shimura curves are covered by
  $Y(1)$ and~$Y_0(2)$, which are known
  to have genus~$0$. The third and fourth are
  defined as the image of a rational map from~$\mathbf{P}^1$, hence also
  have genus~$0$.
  We used MAGMA to compute that the geometric genus of
  the plane curve~$X$
  (which covers the fifth Shimura curve) is also~$0$.
\end{proof}

\subsection{Degree 4 CM-points}
We recall that a \emph{Weil $q$-number} is an algebraic integer $\pi$ such that $\pi \overline{\pi} = q$
for every embedding of $\pi$ into $\C$.
\begin{lemma}\label{lem:case4}
If $K=\Q[x]$ is a quartic CM-field and $x$ is a Weil $2$-number, then
the characteristic polynomial $P_x^{\mathrm{a}}$ is an irreducible
polynomial of degree~$4$ of the form
$$f=X^4 - a_1X^3 + (4+a_2)X^2 - 2a_1X + 4,$$
where $a_1$ and $a_2$ are integers.

Tables \ref{v4table} and \ref{primitivetable} list exactly the possible pairs $(a_1,a_2)$ (up to replacing $x$ by $-x$ and $a_1$ by $-a_1$)
such that $f$ is irreducible and its roots satisfy $x\overline{x}=2$,
together with all complex principally polarized abelian surfaces
of which the endomorphism ring contains a subring isomorphic to $\Z[X]/f$.
The Galois group of~$f$ is $C_2\times C_2$ for~$f$ as in Table \ref{v4table}
and $D_4$ for$f$ as in Table \ref{primitivetable}.\end{lemma}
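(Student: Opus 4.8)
The plan is to first pin down the shape of the minimal polynomial from the CM-structure, then reduce the classification of the pairs $(a_1,a_2)$ to a finite search, and finally feed each surviving field into the algorithm of Section~\ref{sec:degree4} to produce the surfaces.

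\emph{The polynomial form.} Since $K$ is a quartic CM-field, it is a quadratic extension of its real quadratic subfield $K_0$, and by Lemma~\ref{fieldcase} complex conjugation $\overline{\cdot}$ generates $\Aut(K/K_0)$. As $x$ is a Weil $2$-number we have $x\overline{x}=2$ and $s_1:=x+\overline{x}\in K_0$ is an algebraic integer, so $x$ is a root of $X^2-s_1X+2\in K_0[X]$, which is its minimal polynomial over $K_0$. Writing $s_2$ for the conjugate of $s_1$ over $\Q$, the minimal polynomial of $x$ over $\Q$ is
$$
f=(X^2-s_1X+2)(X^2-s_2X+2).
$$
Expanding and putting $a_1=\Tr_{K_0/\Q}(s_1)=s_1+s_2$ and $a_2=N_{K_0/\Q}(s_1)=s_1s_2$, which are rational integers, gives $f=X^4-a_1X^3+(4+a_2)X^2-2a_1X+4$; note $f=P_x^{\mathrm r}=P_x^{\mathrm a}\overline{P_x^{\mathrm a}}$ by Lemma~\ref{theResultsFromSection2onPaAndPr}. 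Replacing $x$ by $-x$ replaces $(a_1,a_2)$ by $(-a_1,a_2)$, so we may normalize $a_1\ge 0$.

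\emph{Reduction to a finite search.} The requirement that every complex absolute value of $x$ equal $\sqrt 2$ is equivalent, for each of the two real embeddings of $K_0$, to the roots of $X^2-s_iX+2$ being a non-real conjugate pair, i.e.\ $s_i^2<8$. Since $s_1,s_2$ are the roots of $T^2-a_1T+a_2$, asking that both lie in $(-2\sqrt 2,2\sqrt 2)$ forces $|a_1|<4\sqrt 2$ together with $a_2>2\sqrt 2\,|a_1|-8$ and $a_2<a_1^2/4$; only finitely many integer pairs survive. For such a pair, $f$ is irreducible if and only if $a_1^2-4a_2$ is a positive non-square: then $K_0=\Q(\sqrt{a_1^2-4a_2})$ is genuinely real quadratic, and since $s_i^2-8<0$ is never a square in the real field $K_0$ we automatically obtain $[K:\Q]=4$ with $K=K_0(\sqrt{s_1^2-8})$ totally imaginary, hence a CM-field as required.

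\emph{The Galois group and the surfaces.} With $\beta=s_1^2-8\in K_0$ totally negative and $K=K_0(\sqrt\beta)$, the extension $K/\Q$ is Galois exactly when $N_{K_0/\Q}(\beta)=(a_2+8)^2-8a_1^2$ is a rational square; the group is then $C_2\times C_2$ when this norm lies in $(\Q^*)^2$ and cyclic when it lies in $(a_1^2-4a_2)(\Q^*)^2$, and it is $D_4$ otherwise. Running through the finitely many surviving pairs and applying this criterion sorts them into the two tables and shows that the cyclic case does not occur for norm $2$. For each pair we then set $K=\Q[X]/f$ and run the algorithm of Section~\ref{sec:degree4}: compute the class group of $\O_K$, enumerate the orders and the fractional ideals $\M$ with $e\M_K\subseteq\M\subseteq\M_K$, and for each determine the generators $\xi$ with $\xi\overline{\M}=\M^*$ and $\xi^2$ totally negative, up to the unit action; by Theorem~\ref{thm:modulispaces} every admissible $(\M,(\xi))$ yields exactly one principally polarized abelian surface, and these exhaust the surfaces whose endomorphism ring contains $\Z[X]/f$. \emph{The main obstacle is precisely this last step}: the ideal-and-unit search must be carried out explicitly, here with \textsc{Magma}, for every field on the list, and it is this computation rather than the structural arguments above that actually produces the entries of Tables~\ref{v4table} and~\ref{primitivetable}.
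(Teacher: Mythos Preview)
Your derivation of the polynomial form and the reduction to a finite search are essentially the same as the paper's: set $\beta=x+\overline{x}\in K_0$, take $a_1=\Tr_{K_0/\Q}(\beta)$, $a_2=N_{K_0/\Q}(\beta)$, and bound $(a_1,a_2)$ via the total negativity of $\beta^2-8$. Your Galois-group criterion is more explicit than what the paper writes down, but leads to the same split into the two tables. (A small notational remark: as you implicitly note with $f=P_x^{\mathrm r}=P_x^{\mathrm a}\overline{P_x^{\mathrm a}}$, the polynomial $f$ is really the degree-$4$ \emph{rational} characteristic polynomial; the lemma's $P_x^{\mathrm a}$ should be read as $P_x^{\mathrm r}$.)

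Where your treatment diverges from the paper is the last step, and this is a genuine, if narrow, gap. Running Section~\ref{sec:degree4} produces a finite list of pairs $(\M,\xi)$, hence of complex tori; but the content of Tables~\ref{v4table} and~\ref{primitivetable} is an explicit algebraic description (Igusa invariants, or identification with specific products $E\times E'$), and you do not say how you pass from $(\M,\xi)$ to that. The paper handles the two Galois types differently here. For the $D_4$ fields it observes that $\Z[x,\overline x]$ is the maximal order and invokes the algorithm of~\cite{streng}, which comes with proven precision bounds, to compute the invariants rigorously. For the $C_2\times C_2$ fields it explicitly warns that the required precision for the complex-analytic method is \emph{not} known, so numerical evaluation of the $(\M,\xi)$'s gives only a conjectural table; the proof instead uses that $K$ contains an imaginary quadratic subfield $K_1$, so every such $A$ is isogenous to $E\times E$ with $\End(E)\supseteq\mathcal{O}_{K_1}$, and then verifies the entries by the glueing techniques of Section~\ref{sec:glueings}. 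Your uniform ``Section~\ref{sec:degree4} plus \textsc{Magma}'' prescription would reproduce the enumeration but not, without further argument, the provably correct explicit entries in Table~\ref{v4table}.
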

\begin{table}
$$
\begin{array}{|l|l|l|}\hline
  a_1 & a_2 & \mbox{principally polarized abelian varieties}\\
\hline
0 & -7 & E_{-7}\times E_{-7}\\
0 & -6 & J(C_{-8}), E_{-3}\times E_{-12}\\
0 & -5 & J(C_{-3}), E_{-3}\times E_{-3}, J(C_{-15}), E_{-15}^1\times E_{-15}^2\\
1 & -5 & E_{-3}\times E_{-3}\\
0 & -3 & J(C_{-20}^{ i}) (i^2=-1), J(C_{-15}), E_{-15}^1\times E_{-15}^2 \\
0 & -2 & J(C_{-6}^1), J(C_{-6}^2), E_{-3}\times E_{-3}, E_{-3}\times E_{-12}\\
2 & -2 & E_{-3}\times E_{-3},  E_{-4}\times E_{-4}\\
3 & 1 & J(C_{-15}), E_{-3}\times E_{-3}\\
\hline\end{array}
$$
\caption{All Weil $2$-numbers that generate quartic fields that contain an
imaginary quadratic subfield
are roots of $f=X^4 - a_1X^3 + (4+a_2)X^2 - 2a_1X + 4$ with $a_1,a_2$ as in the table. For each such Weil $2$-number $x$, the column
on the right lists
all principally polarized abelian surfaces
over $\C$
of which the endomorphism ring contains~$x$.}\label{v4table}
\end{table}

\begin{table}
$$\begin{array}{|l|l|l|}\hline
a_1 & a_2 & [D,A,B] \\
\hline
1 & -4 & [17,5,2]\\
1 & -3 &  [13,9,17]\\
1 & -1 &  [5,13,41]\\
2 & -1 & [8,10,17]\\
\hline\end{array}
$$
\caption{All Weil $2$-numbers that generate quartic fields
that do not contain an imaginary quadratic subfield are roots of 
$f=X^4 - a_1X^3 + (4+a_2)X^2 - 2a_1X + 4$ with $a_1,a_2$ as in the table.
For each such Weil $2$-number~$x$,
the ring $\Z[x,\overline{x}]$ turns out to be the maximal
order~$\O_K$ of its field of fractions $K\cong\Q[X]/(X^4+AX^2+B)$,
with real quadratic subfield $K_0$ of discriminant~$D$.
For each, there are~$2$ isomorphism classes of principally
polarized abelian surfaces with endomorphism ring~$\O_K$,
and they are Jacobians of genus-two curves.
Conjecturally correct Igusa-Clebsch invariants
(obtained by numerical
evaluation) of these eight curves
can be found by searching for $[D,A,B]$
in the Echidna database~\cite{echidna-data}.
Hyperelliptic models corresponding to these invariants,
and in some cases even a proof of correctness,
appear
in~\cite{bouyer-streng}.
In general, one could prove correctness using the methods
of~\cite{wamelen-proving} or the results
of~\cite{lauter-viray}.
}\label{primitivetable}
\end{table}

\begin{proof} 
Suppose $x$ is a Weil $2$-number of degree~$4$ and let $\beta=x+2/x=x+\overline{x}$, so $\beta$ is a
totally real algebraic integer of degree at most $2$.
As we have $x^2-\beta x+2=0$, we find that $\beta$ is not rational.
Let $a_1$ be the trace of the real quadratic integer $\beta$
and $a_2$ its norm.
Then $f$ as in the lemma
is the minimal polynomial of $x$.
As~$x$ is totally non-real,
we find that $\beta^2-8$ is totally negative, which gives upper bounds on the absolute
values of $a_1$ and $a_2$. For all $a_1,a_2$ up to those bounds with $a_1\geq 0$, we checked if $f$ is irreducible
and its roots are Weil $2$-numbers.

For each resulting $a_1$, $a_2$ where $K=\Q[x]$ is a
quartic CM-field that does not contain an imaginary quadratic subfield,
it turned out that
$\Z[x,\overline{x}]$ is its ring of integers.
An algorithm to compute Table~\ref{primitivetable}
is given in~\cite{streng},
but the constants are too large for provenly
correct output.
We know that there are $8$ curves, and know them numerically
to high precision. See Table~\ref{primitivetable}.

We could use the same method (again without proof)
for the remaining cases, where $K=\Q[x]$ does contain
an imaginary quadratic subfield.
Instead, we managed to give a proof using
alternative methods as follows.
As in Section~\ref{sec:degree4},
we get a set of representatives of each isomorphism class of
principally polarized abelian varieties with the endomorphism~$x$,
and we compute approximations with a guaranteed error bound
using interval arithmetic.
Next, let $\phi_1$, $\phi_2$ be the two embeddings
$K\rightarrow\C$
that map $\xi$ to the positive imaginary axis.
Let $K_1\subset K$ be the unique imaginary
quadratic subfield such that $\phi_1$ and $\phi_2$
are equal on $K_1$.
Then $A$ is isogenous to a product
$E_1\times E_2$ of elliptic curve
with CM by
the ring of integers $\mathcal{O}_{K_1}$ of $K_1$
(see~\cite[Theorem 1.3.5]{langcm} or see the
definite case in the discussion of Section~\ref{sec:modpol}).

By considering the corresponding period matrices, we found for
each of them an explicit pair of CM elliptic curves
$E_1\times E_2$ and an integer $n\geq 1$
such that $A$ is $(n,n)$-isogenous to $E_1\times E_2$.
We were lucky enough to have $n\in\{1,2\}$ for each
of our modules.
In the case $n=1$, we simply write down $E_1\times E_2$,
and in the case $n=2$, we used Legendre glueing
(Section~\ref{explicitglue})
to obtain a complete set of candidates, and used
our numerical absolute Igusa invariants to discard all but one.
\end{proof}

\subsection{Non-fields}
Finally, we need to consider the case that~$\Q[x]$ is not a domain. The
result is the following lemma.
\begin{lemma}\label{lem:case5}
\label{nonfieldlemma2}
  Let $A$ be a principally polarized abelian surface over $\C$
  and $x:A\rightarrow A$ a $(2,2)$-isogeny.
  If $\Q[x]$ is not a field, then~$A$ is
  isomorphic to one of the following:
  \begin{enumerate}
  \item the $5$ polarized products $E_{-4}\times E_{-4}$, $E_{-4}\times E_{-7}$,
$E_{-4}\times E_{-8}$, $E_{-7}\times E_{-7}$ and $E_{-7}\times E_{-8}$,
  \item the Jacobians of the $4$ curves $C_{-7}$, $C_{-4,-8}$, and $C_{-4,-7}^a$,
      where $a^2=7$.
\end{enumerate}
  Conversely, each of these abelian surfaces has such an endomorphism~$x$.
\end{lemma}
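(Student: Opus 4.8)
The plan is to reduce everything to the structure theorem Lemma~\ref{nonfieldlemma} and then run through the finitely many glueing problems it produces. Applying it with $l=2$ gives $P_x^{\mathrm{a}}=(X-\beta_1)(X-\beta_2)$ for imaginary quadratic integers $\beta_1,\beta_2$ of norm $2$ with distinct traces, elliptic curves $E_1,E_2$ with $\End(E_i)\supseteq\Z[\beta_i]$, and an $(n,n)$-isogeny $\lambda\colon E_1\times E_2\to A$ with $n\mid(\Tr\beta_1-\Tr\beta_2)$ and $x\circ\lambda=\lambda\circ(\beta_1,\beta_2)$. The first step is to list the $\beta_i$: a norm-$2$ imaginary quadratic integer has trace $t$ with $t^2<8$, so $t\in\{0,\pm1,\pm2\}$ and $\Z[\beta]$ has discriminant $t^2-8\in\{-8,-7,-4\}$. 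As in Example~\ref{quadraticfields}, the norm-$2$ elements of $\Z[\sqrt{-2}]$ all have trace $0$, those of $\O_{-7}$ have trace $\pm1$, and those of $\Z[i]$ have trace $\pm2$; in particular two norm-$2$ elements of $\Z[\sqrt{-2}]$ can never have distinct traces. Since $h(-4)=h(-7)=h(-8)=1$ and each of these orders is maximal, each $E_i$ is forced to be the unique curve $E_{D_i}$ with $D_i=t_i^2-8$, and the only remaining freedom is in the pair of traces and in the glueing $\lambda$.

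Next I would treat the case $n=1$, where $\lambda$ is an isomorphism of principally polarized surfaces, so $A\cong E_{D_1}\times E_{D_2}$ with the product polarization. Running over unordered trace pairs $\{t_1,t_2\}$ with $t_1\neq t_2$ (up to the sign change $x\mapsto -x$, which negates both traces), the distinct products that occur are exactly $E_{-4}\times E_{-4}$ (from $\{2,-2\}$), $E_{-7}\times E_{-7}$ (from $\{1,-1\}$), $E_{-4}\times E_{-7}$, $E_{-4}\times E_{-8}$ and $E_{-7}\times E_{-8}$, while $E_{-8}\times E_{-8}$ is excluded precisely because $\Z[\sqrt{-2}]$ has no two norm-$2$ elements of distinct trace. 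This gives list~(1).

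For $n>1$ the isogeny $\lambda$ is a genuine glueing, and I would use Lemma~\ref{isotropicWeil}, Lemma~\ref{lem:glueingseverywhere} and Kani's criterion (Theorem~\ref{thm:whenglueing}) to decide, for each admissible trace pair, whether the compatible glueings are decomposable or are Jacobians. The compatibility condition $x\circ\lambda=\lambda\circ(\beta_1,\beta_2)$ becomes $\psi\beta_1=\beta_2\psi$ on the relevant torsion and singles out the glueing maps $\psi$ to consider. When the two CM-fields differ there is no isogeny $E_1\to E_2$, so Kani's criterion is vacuously satisfied and every compatible glueing is a Jacobian: the pair $\{2,0\}$ admits $n=2$ and yields $J(C_{-4,-8})$, the pair $\{2,-1\}$ admits $n=3$ and yields the Galois-conjugate pair $J(C_{-4,-7}^a)$ with $a^2=7$, while $\{2,1\}$ and $\{1,0\}$ force $n=1$ and contribute no curve. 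When the two CM-fields coincide the automorphisms of $E_D$ enter: for $D=-7$ one has $\Aut(E_{-7})=\{\pm1\}$ acting trivially on the $2$-torsion, the compatible $\psi$ is not induced by an isomorphism, so the $n=2$ glueing is the Jacobian $J(C_{-7})$; for $D=-4$ the extra automorphism $i$ \emph{does} induce an anti-isometry of $E_{-4}[2]$, so by Kani the $n=2$ glueing is decomposable and only reproduces the product $E_{-4}\times E_{-4}$ already found. In each genuine-glueing case I would feed the curves into the explicit Legendre ($n=2$, Section~\ref{ssec:legendreglueing}) and Kuhn ($n=3$, Section~\ref{kuhn}) formulas, exactly as in the proofs of Lemmas~\ref{lem:case3} and~\ref{lem:case2}, and match the resulting Igusa invariants against Table~\ref{tableofcurves}; this produces the four curves of list~(2). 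The converse is then immediate: on each product $(\beta_1,\beta_2)$ is a $(2,2)$-isogeny with $\Q[x]=\Q(\beta_1)\times\Q(\beta_2)$ not a field, and on each Jacobian the descended endomorphism $gxg^{-1}$ of the glueing is of the same type.

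The step I expect to be the main obstacle is the same-CM-field pair $\{2,-2\}$ with $D=-4$: here $t_1-t_2=4$, so Lemma~\ref{nonfieldlemma} a priori allows the non-prime level $n=4$ as well as $n=2$, and neither Kani's criterion nor Lemma~\ref{lem:glueingseverywhere} applies at composite level. I would dispose of $n=4$ either by a direct analysis of the $4$-torsion glueing or by noting that any resulting indecomposable principally polarized surface isogenous to $E_{-4}^2$ and carrying the endomorphism with eigenvalues $(1+i,-1+i)$ would be a genus-$2$ Jacobian admitting an automorphism of order $4$, and checking against the (finite) list of such Jacobians that none is compatible with the descent of $x$; this leaves $\{2,-2\}$ contributing only the product $E_{-4}\times E_{-4}$. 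Verifying that the compatibility constraint $\psi\beta_1=\beta_2\psi$ really selects exactly the claimed glueings, and the bookkeeping showing that $\{2,-1\}$ yields precisely the two conjugate curves $C_{-4,-7}^a$ rather than more, are the other places where care is needed.
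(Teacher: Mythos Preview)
Your overall strategy matches the paper's: reduce via Lemma~\ref{nonfieldlemma}, enumerate the trace pairs, dispose of $n=1$ as products, and treat $n>1$ by glueing. The substantive divergence is in how the same-CM-field cases are eliminated, and this is exactly where your acknowledged $n=4$ obstacle sits.

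The paper does not use Kani's criterion here. Instead it takes $n$ \emph{minimal}, picks a prime $p\mid n$, and shows that $S=\ker\lambda\cap(E_1\times E_2)[p]$ is the graph of an anti-isometry $\psi$ with $\beta_2\psi=\psi\beta_1$ on~$E_1[p]$ (the alternative $S=C_1\times C_2$ is ruled out by minimality). In the troublesome cases---$\beta_1=1+i$, $\beta_2=-1\pm i$ with $p=2$, and the subcase $\beta_1=\tfrac12(1+\sqrt{-7})$, $\beta_2=-\beta_1$ with $p=2$---one has $\beta_1\equiv\beta_2\pmod 2$, so $(\beta_1,\beta_1)$ \emph{also} descends to $(E_1\times E_2)/S$. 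That quotient then carries a $(2,2)$-endomorphism whose analytic representation has the repeated eigenvalue~$\beta_1$; by Lemma~\ref{lem:case3} it must be $E_D\times E_D$, and $\lambda$ factors through an $(n/p,n/p)$-isogeny $E_D^2\to A$, contradicting minimality of~$n$. This single argument disposes of $n=2$ and $n=4$ for $D=-4$ simultaneously---no separate $4$-torsion analysis or search through order-$4$ automorphism lists is needed---and it also removes the redundant sign choice $\beta_2=-\beta_1$ in the $D=-7$ case that your sketch does not explicitly separate out, leaving exactly one compatible $\psi$ there, which yields~$J(C_{-7})$.

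For the pair $\{2,-1\}$ at $n=3$ the paper likewise avoids Kani: it feeds $j(E_1)=-3375$, $j(E_2)=1728$ into Kuhn's formulas of Section~\ref{kuhn}, obtains six candidate curves, observes that the compatible $\psi$'s fall into at most two $\Aut(E_1\times E_2)$-orbits, and then pins down the correct two curves by a Galois argument (the six candidates split over~$\Q$ into orbits of lengths $2$ and~$4$, and endomorphism structure is Galois-stable) together with a verification against~$W_2$. Your Kani-based reasoning for the mixed-field pairs is correct, but this is the bookkeeping you flagged as delicate.
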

\begin{proof}
  Given $A$ and $x$, let
  $E_1$, $E_2$, $n$, $\lambda$ be as in Lemma~\ref{nonfieldlemma}.
  As we can interchange $E_1$ and $E_2$, and we can replace $x$ by $\pm x$
  or $\pm x'$,
  we have without loss of generality
  $\left|\Tr\beta_1\right|\geq\left|\Tr\beta_2\right|$,
  $\Tr\beta_1>\Tr\beta_2$,
  and $\mathrm{Im}\beta_1>0$.

  Example \ref{quadraticfields} shows that $\Z[\beta_j]$
  is the maximal order of the imaginary quadratic field of discriminant $-4,-7,$ or $-8$,
  which has class number~$1$.
  Therefore, we get exactly one curve $E_j$ for a given $\beta_j$
  and hence (1) lists exactly all examples with~$n=1$.

  Now assume $n$ is minimal with the properties of Lemma~\ref{nonfieldlemma},
  and suppose that $n\not=1$.
  Let $p|n$ be a prime and let $S=\ker\lambda\cap (E_1\times E_2)[p]$.
  As $\lambda$ is an $(n,n)$-isogeny, its kernel is
  maximally isotropic in $(E_1\times E_2)[n]$
  by Lemma~\ref{isotropicWeil}.
  This implies that
  $S$ is maximally isotropic in $(E_1\times E_2)[p]$.
  By Lemma~\ref{lem:glueingseverywhere},
  it follows that $S$
  is either a product $C_1\times C_2$
  of subgroups $C_j\subset E_j[p]$ of order $p$, or $S$ is the
  graph of an anti-isometry
  $\psi:E_1[p]\rightarrow E_2[p]$.
  Moreover, as
  $(\beta_1,\beta_2)$ induces an endomorphism of $A$,
  it sends $\ker\lambda$
  to itself and hence sends $S$ to $S$.  
  
  Note that $S=C_1\times C_2$ is not possible.
  Indeed, it would imply
  that $E_j/C_j$ has CM by $\Z[\beta_j$] and that
  we have an $(n/p,n/p)$-isogeny
  $$E_1/C_1\times E_2/C_2=(E_1\times E_2)/S\rightarrow(E_1\times E_2)/\ker\lambda=A,$$
  contradicting minimality of~$n$.
  
  Therefore, $S$ is the graph of an anti-isometry
  $\psi:E_1[p]\rightarrow E_2[p]$, and the fact that
  $(\beta_1,\beta_2)$ sends $S$ to itself implies that
  $\beta_2\circ\psi=\psi\circ\beta_1$ holds on~$E_1[p]$.
  
  Let $T= (\Tr\beta_1\bmod p)=(\Tr\beta_2\bmod p)\in\Z/p\Z$.
  We find that we are in one of the following cases:
  \begin{enumerate}
  \item $\beta_1=1+i$, $\beta_2=\pm i\sqrt{2}$, $p=n=2$, $T=0$,
  \item $\beta_1=1+i$, $\beta_2=\frac{1}{2}(-1\pm i\sqrt{7})$, $p=n=3$, $T=-1$,
  \item $\beta_1=1+i$, $\beta_2=-1\pm i$, $p=2$, $n\in\{2,4\}$, $T=0$,
  \item $\beta_1=\frac{1}{2}(1+i\sqrt{7})$,
        $\beta_2=\frac{1}{2}(-1\pm i\sqrt{7})$, $p=n=2$, $T=1$.
  \end{enumerate}

  We can dismiss case (3), and the special
  case of (4) where  $\mathrm{Im}\beta_2<0$, because of the following argument.
  In those cases, we identify $E_1$ with $E_2$ and find that
  $\beta_2-\beta_1$ is a multiple of $2$ in the endomorphism ring of $E_1$,
  hence
  $(\beta_1,\beta_2)$ and $(\beta_1,\beta_1)$ are equal on $(E_1\times E_2)[2]$,
  showing that $(\beta_1, \beta_1)$ induces an endomorphism
  of $(E_1\times E_2)/S$ with both eigenvalues of the
  analytic representation equal to $\beta_1$.
  By Lemma \ref{lem:case3}
  this implies that $(E_1\times E_2)/S\cong E_1\times E_2$
  as a polarized abelian variety,
  contradicting minimality of~$n$.

  In case 1, the difference $\beta_2-\overline{\beta_2}$ is a multiple
  of $2$, so the above argument shows that the
  two possible signs lead to the same set of
  abelian surfaces. Therefore,
  we can restrict to $\Im\beta_2>0$ in this case
  without loss of generality.

  Only the cases (1), (2) and (4) remain and in each of these cases
  except case~(2), we can assume $\Im\beta_2>0$.
  The group
$$E_j[p]=(\C/\Z[\beta_j])[p]=\frac{1}{p}\Z[\beta_j]/\Z[\beta_j],$$
  has a basis $1/p,\beta_j/p$
  and we express $\beta_j$ and $\psi$ by matrices with respect to these bases.
  The identity $\beta_2\psi=\psi\beta_1$ puts some restrictions
  on the matrix coefficients of~$\psi$.
  Moreover, the Weil pairing~$e_p$ on $E_j[p]$
  satisfies $e_p(1/p,\beta_j/p)=\exp(\pm (2\pi i)/p)$, where the sign
  is the sign of the imaginary part of $\beta_j$.
  In particular, the map $\psi$ is an anti-isometry if and only if
  its determinant is $\pm (-1)$ in $\Z/p\Z$.
  We now have a system of equations for the coefficients of $\psi$,
  which we solve.

  For case~(1), we find $2$ solutions, which yield isomorphic
  quotients $(E_1\times E_2)/S$, since the groups $S$
  are mapped to each other by
  $\mathrm{Aut}(E_1\times E_2)=\mathrm{Aut}(E_1)\times \mathrm{Aut}(E_2)$.
  Legendre glueing as in Section~\ref{explicitglue}
  shows that the quotient is~$J(C_{-4,-8})$.
  For case~(4), we find $1$ solution, which yields $J(C_{-7})$.

  Only case~(2) remains, where $p=3$.
  We find $4$ solutions with $\Im \beta_2>0$
  and $4$ solutions with $\Im\beta_1<0$. Each of these sets
  of $4$ solutions is an $\mathrm{Aut}(E_1\times E_2)$-orbit,
  so that there are at most $2$ quotient abelian surfaces.

In~\cite[Example 6]{kuhn}, Kuhn studies 3-glueings and gives an
example of a family of genus-$2$ curves $C$
  such that $J(C)$ is the image of a $(3,3)$-isogeny
  from a product of elliptic curves.
The curve
$$ 
C: y^2 = (x^3+ax^2+bx+c)(4cx^3+b^2x^2+2bcx+c^2)
$$
is a $3$-glueing of two elliptic curves with
$j$-invariants
$$ 
j(E_1) = \frac{16(972ac^3 - 405b^2c^2 - 216a^2bc^2 + 126ab^3c - 12b^5 - a^2b^4)^3 }{(27c^2 - b^3)^3(27c^2 - 18abc + 4a^3c + 4b^3 - a^2b^2)^2} 
$$
$$
j(E_2) = \frac{256(3b - a^2)^3} {27c^2 - 18abc + 4a^3c + 4b^3 - a^2b^2}.
$$
We use Kuhn's formula and solve for 
  for $j(E_2)= 1728$ and $j(E_1)=-3375$.
  This yields equations for $6$ pairwise non-isomorphic curves $C$
  of genus~$2$ of which the Jacobian is the image
  of a $(3,3)$-isogeny from $E_1\times E_2$.

  We claim that this list of $6$ curves contains
  all curves $C$ such that $J(C)$ is $(3,3)$-isogenous
  to $E_1\times E_2$.
  Indeed, the kernel of the isogeny $E_1\times E_2\rightarrow J(C)$
  is the graph of an anti-isometry
  by Lemmas \ref{isotropicWeil} and~\ref{lem:glueingseverywhere}. 
  There are $24$ anti-isometries from $E_1[3]$ to $E_2[3]$,
  which are partitioned into $6$ orbits of
  composition with $\Z[i]^*$ on the right,
  hence the list of candidate curves
  is complete.
  
  The points in $\A_2$ corresponding to these candidate curves
  are partitioned into two Galois orbits over~$\Q$:
  one of length $2$ and one of length $4$. As we have found only two
  possibilities $A=(E_1\times E_2)/\mathop{\mathrm{graph}}(\psi)$, and
  isomorphism classes of endomorphism rings over $\C$
  are invariant under conjugation over $\Q$,
  our two $A$ cannot lie in the Galois orbit of length $4$,
  so they form the Galois orbit of length $2$.
  They are the $C_{-4,-7}^a$ listed in the lemma.
\end{proof}

\subsection{Summary}
We now state and prove a more explicit version of Theorem~\ref{leq2theorem}.
\begin{theorem}\label{thm:summary}
  The reduced variety $\modulitwo$
  is the 
disjoint
union of the Humbert surface~$H_8$ of discriminant~$8$, 
which is irreducible, and the following
CM points:
\begin{enumerate}
 \item the three points of $\A_2\setminus \M_2$
corresponding to the canonically polarized products
$E_{-4}\times E_{-7}$,
$E_{-4}\times E_{-8}$, and $E_{-7}\times E_{-8}$,
that is, the pairs of $j$-invariants
$\{1728,-3375\}$, $\{1728,8000\}$, $\{-3375,8000\}$,
\item the four points of $\M_2$
corresponding to the curves
$C_{-15}$, $C_{-4,-8}$ and $C_{-4,-7}^a$
$(a^2=7)$
of Table~\ref{tableofcurves}, and
\item the $6$ points of $\M_2$ corresponding to the cases
with $D\not=8$ of
      Table~\ref{primitivetable},
\end{enumerate}
\end{theorem}
\begin{proof}
  The variety $\modulitwo$ is the union of~$H_8$, 
  the Shimura curves of Lemma~\ref{lem:case2},
  and the CM points of Lemmas \ref{lem:case3}, \ref{lem:case4}, and~\ref{lem:case5}.
  We distinguish between points of~$\M_2$, corresponding
  to curves of genus two,
  and points of $\A_2\setminus\M_2$,
  corresponding to canonically polarized products of 
  elliptic curves.

  For any elliptic curve~$E$, the point
  of $\A_2\setminus\M_2$ corresponding to
  $E\times E$ lies on~$H_8$, because
  of the endomorphism
  $$\left(\begin{array}{cc} 1 & 1 \\ 1 & -1\end{array}\right).$$
  The same holds for the point in $\A_2\setminus\M_2$
  corresponding to any product $E\times F$ where $f:E\rightarrow F$
  is a $2$-isogeny, because of 
  $$\left(\begin{array}{cc} 0 & f^\vee \\ f & 0\end{array}\right).$$
  This proves that the first two Shimura curves of Lemma~\ref{lem:case2} lie
  on~$H_8$, as well as all points in $\A_2\setminus\M_2$
  under consideration, except those listed in~(1).
  The points listed in (1) do not lie on~$H_8$,
  as the corresponding endomorphism rings are direct products of
  rings without~$\sqrt{2}$.

  We evaluate a defining equation for~$H_8$,
  provided by David Gruenewald,
  in those points in $\M_2$ for which we know proven
  values of the Igusa invariants. This allows us to decide
  whether the point is on~$H_8$, and (2) lists exactly the
  cases where the point does not lie on~$H_8$.
  We use the same trick for the Shimura curves of which an
  open piece lies in~$\M_2$,
  which proves that the remaining three Shimura curves
  lie on~$H_8$.

  This leaves only the CM points of Table~\ref{primitivetable},
  where the endomorphism ring
  equals
  the maximal order of the
  quartic field~$K$~\cite[Theorem~1.3]{vanwamelen}.
  The unique real quadratic subfield of~$K$
  has discriminant~$D$,
  with~$D$ listed in the table. In particular, the point is on~$H_8$
  if and only if~$D=8$, that is, if and only if the point
  is not listed in~(3).
\end{proof}

\bibliographystyle{plain}
\bibliography{bls}
\end{document}